\DeclareMathOperator{\Hom}{Hom}
\DeclareMathOperator{\SO}{SO}
\DeclareMathOperator{\U}{U}
\DeclareMathOperator{\Ric}{Ric}
\newcommand{\R}{\mathbb R}
\newcommand{\C}{\mathbb C}
\newcommand{\Z}{\mathbb Z}
\newcommand{\diff}{\mathrm{d}}
\newcommand{\st}{\mathrm{st}}
\newcommand{\g}{\mathfrak{g}}
\renewcommand{\P}{\mathbb P}
\renewcommand{\tilde}{\widetilde}
\theoremstyle{plain}
	\newtheorem{theorem}{Theorem}
	\newtheorem{proposition}[theorem]{Proposition}
	\newtheorem{lemma}[theorem]{Lemma}
	\newtheorem{corollary}[theorem]{Corollary}
	\newtheorem{conjecture}[theorem]{Conjecture}
	\newtheorem{question}[theorem]{Question}
\theoremstyle{definition}
	\newtheorem{definition}[theorem]{Definition}
\theoremstyle{plain}
	\newtheorem*{theorem*}{Theorem}
	\newtheorem*{proposition*}{Proposition}
	\newtheorem*{lemma*}{Lemma}
	\newtheorem*{corollary*}{Corollary}
	\newtheorem*{conjecture*}{Conjecture}
\theoremstyle{definition}
	\newtheorem*{definition*}{Definition}
	\newtheorem*{remark*}{Remark}
	\newtheorem*{remarks*}{Remarks}
\def\blfootnote{\xdef\@thefnmark{}\@footnotetext}
\numberwithin{theorem}{section}
\begin{document}

\title{Circle-invariant fat bundles and symplectic Fano 6-manifolds}

\author{Joel Fine\footnote{JF was supported by an Action de Recherche Concertée and by an Interuniversity Action Poles grant.} 
~and
Dmitri Panov\footnote{DP is a Royal Society Research Fellow.}}

\date{\today}

\maketitle

\begin{abstract}
We prove that a compact 4-manifold which supports a circle-invariant fat $\SO(3)$-bundle is diffeomorphic to either $S^4$ or $\overline{\C\P}^2$. The proof involves studying the resulting Hamiltonian circle action on an associated symplectic 6-manifold. Applying our result to the twistor bundle of Riemannian 4-manifolds shows that $S^4$ and $\overline{\C\P}^2$ are the only 4-manifolds admitting circle-invariant metrics solving a certain curvature inequality. This can be seen as an analogue of Hsiang-Kleiner’s theorem that only $S^4$ and $\C\P^2$ admit circle-invariant metrics of positive sectional curvature.
\end{abstract}

\section{Overview}

\subsection{The main result}

To state the main result of this article, we begin with a definition.

\begin{definition}
Let $E \to M$ be an $\SO(3)$-bundle over a 4-manifold. A metric connection $A$ in $E$ is called \emph{definite} if $F_A(u,v) \neq 0$ whenever $u,v$ are linearly independent tangent vectors. 

\end{definition}

Definite connections are a special case of \emph{fat connections}, a concept introduced by Weinstein \cite{Weinstein1968Unflat-bundles,Weinstein1980Fat-bundles-and}. In \cite{Fine2009Symplectic-Cala} we rediscovered this definition in the context of twistor spaces. We feel this particular case of fat connections, namely fat $\SO(3)$-connections over 4-manifolds,  deserves a specific name because of various connections to different branches of geometry (see for example \cite{Fine2009Symplectic-Cala,Fine2013The-diversity-o,Fine2014A-gauge-theoret} for applications to minimal surfaces, construction of symplectic Calabi--Yau manifolds and the study of Einstein metrics respectively). For this reason we will keep using the name definite connection.

This article is concerned with definite connections which are invariant under a circle action. 

\begin{definition}
We say a 4-manifold $M$ admits an $S^1$-invariant definite connection if:
\begin{enumerate}
\item
There is a faithful action of $S^1$ on $M$.
\item
The action lifts to the total space of an $\SO(3)$-bundle $E\to M$, sending fibres to fibres by linear isometries.
\item
There is a definite connection in $E$ which is preserved by the $S^1$-action.
\end{enumerate}
\end{definition}

Our main result is the following.

\begin{theorem}\label{main_theorem}
If a closed 4-manifold $M$ admits an $S^1$-invariant definite connection then $M$ is diffeomorphic to either $S^4$ or $\overline{\C\P}^2$.
\end{theorem}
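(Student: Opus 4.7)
My plan is to translate the problem into one about a Hamiltonian circle action on a symplectic 6-manifold, where one has sharp tools available. From the definite connection $A$ on $E\to M$ one builds a symplectic form $\omega_Z$ on the total space of the unit sphere bundle $\pi\colon Z\to M$: on each fibre $\pi^{-1}(x)\cong S^2$ it restricts to the round area form, while on horizontal vectors it is obtained by pairing the curvature $F_A\in\Omega^2(M;E)$ with the tautological section of $\pi^*E|_Z$. Definiteness of $A$ is precisely the non-degeneracy condition for $\omega_Z$; this is the twistor/fat-bundle construction developed in the authors' earlier work. Because the $S^1$-action on $M$ lifts through $E$ by connection-preserving bundle isometries, it lifts to $Z$ preserving $\omega_Z$, giving a symplectic circle action on a closed symplectic 6-manifold.

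The first step is to upgrade this symplectic action to a Hamiltonian one, i.e.\ to produce a moment map $H\colon Z\to\R$. The Serre spectral sequence of $\pi$ gives $H^1(Z;\R)=H^1(M;\R)$, so the standard argument (that $\iota_X\omega_Z$ is closed and hence exact once $b_1(Z)=0$) produces $H$ as soon as $b_1(M)=0$. I would establish this vanishing either directly from the positivity implicit in the definite condition, via a Bochner or Seiberg-Witten argument in the twistor picture, or by combining the $S^1$-action with an averaged invariant metric.

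Once $H$ is in hand, I would exploit the Morse-Bott theory of the moment map together with the rigid structure of $Z$ as an $S^2$-bundle over $M$. The fixed set $Z^{S^1}$ projects onto $M^{S^1}$: above an isolated fixed point $p\in M^{S^1}$ the rotation on $E_p\cong\R^3$ has two antipodal fixed directions, contributing two points to $Z^{S^1}$, while above a fixed surface $\Sigma\subset M^{S^1}$ the fixed sub-line-bundle in $E|_\Sigma$ contributes two sections diffeomorphic to $\Sigma$. Hamiltonian $S^1$-actions on closed symplectic manifolds have connected maximum and minimum, even-index critical submanifolds, and localisation formulae relating $[\omega_Z]$ to weights at the fixed set. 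Coupled with $\chi(Z)=2\chi(M)$ and the explicit description of $H^*(Z)$ from $\pi$, these constraints should pin $M^{S^1}$ down to a very short list of configurations.

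The final step is to invoke the Fintushel-Orlik-Raymond classification of simply-connected circle-invariant 4-manifolds, which exhibits any such $M$ as an equivariant connected sum of pieces drawn from $S^4$, $\C\P^2$, $\overline{\C\P}^2$ and $S^2\times S^2$, and identify which pieces are allowed. The asymmetry between $\C\P^2$ and $\overline{\C\P}^2$ in the conclusion must come from the orientation convention built into $\omega_Z$ via the sign of $F_A$. I expect this orientation-sensitive step to be the main obstacle: one needs local models near each fixed component showing that the form produced by a definite connection is compatible with the standard $S^1$-actions on $S^4$ and $\overline{\C\P}^2$, but incompatible with the toric actions on $\C\P^2$, $S^2\times S^2$, and with any non-trivial equivariant connected sum, forcing $M$ to be one of the two listed manifolds.
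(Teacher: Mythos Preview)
Your broad architecture---build the symplectic $S^2$-bundle $Z$, lift the circle action, get a Hamiltonian, constrain the fixed locus, and finish with Fintushel---matches the paper. But two steps are genuinely incomplete.

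\textbf{The Hamiltonian step is circular as written.} You propose to obtain $H$ by first proving $b_1(M)=0$, appealing to ``positivity implicit in the definite condition''. There is no such implicit positivity: a definite connection can be either positive or negative, and negative definite connections exist on hyperbolic $4$-manifolds, which may have $b_1>0$. In the paper the logic runs the other way. One first observes that $\omega_A$ is the curvature of a unitary connection on the vertical line bundle $V\to Z$, and that the $S^1$-action lifts to $V$ preserving this connection; a short calculation (Lemma~3.1) then shows the action is Hamiltonian with no hypothesis on $b_1$ at all. Positive definiteness is then a \emph{consequence} of Hamiltonianness (a symplectic Calabi--Yau admits no Hamiltonian $S^1$-action), and $\pi_1(M)=1$ is deduced still later, via Hui Li's theorem that $\pi_1(Z)\cong\pi_1(Z_{\max})$ once one knows $Z_{\max}$ is a point or sphere. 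Your proposed Bochner or Seiberg--Witten route would need positivity as input, which you do not yet have.

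\textbf{The ``pin down $M^{S^1}$'' step is where almost all the work lies, and the tools you name are not the ones that do it.} Localisation and Euler-characteristic counts are too coarse. The paper's argument rests on three sharper ingredients you do not mention: (i) the adjunction-type inequality $\chi(\Sigma)+[\Sigma]\cdot[\Sigma]>0$ for any surface fixed by a finite cyclic subgroup, coming from the symplectic area of its lift to $Z$; (ii) the constraint and signature identities $\sum 1/(a_jb_j)=\sum[\Sigma_k]^2$ and $\sum(a_j^2+b_j^2)/(a_jb_j)+\sum[\Sigma_k]^2=3\sigma(M)$ from the $G$-signature theorem; and (iii) an explicit formula $H(z)=a+b$ for the Hamiltonian at a fixed point with horizontal weights $(a,b)$, which lets one track $H$ along lifted chains of spheres in $M^{\mathrm{st}}$ (not just $M^{S^1}$) and bound their length by playing off monotonicity of $H$ against the adjunction bound on self-intersections. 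This combinatorial chase occupies the bulk of the paper and is what actually forces $\chi(M)\in\{2,3\}$ and $\sigma(M)=-1$ when $\chi=3$; generic Morse--Bott considerations will not get you there.
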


Here, $\overline{\C\P}^2$ denotes the complex projective plane with the opposite orientation to that induced by its complex structure. As is explained below in \S\ref{review_def_conns}, a definite connection determines an orientation on $M$ and the diffeomorphism of Theorem~\ref{main_theorem} is orientation preserving.

Before giving an outline of the proof, we first give some additional context and, in \S\ref{rel_Riem}, an application.

\subsection{Relation with symplectic geometry}\label{rel_symp}

Given a definite connection in $E \to M$, the unit sphere bundle $Z \subset E$ inherits a natural symplectic form $\omega$. This construction is explained in detail in \cite{Fine2009Symplectic-Cala} and is reviewed below in \S\ref{review_def_conns}. The symplectic manifolds which arise this way are of a very special kind. The fibres of $Z \to M$ are symplectic 2-spheres. Moreover, their normal degree $d$ (with respect to an almost complex structure tamed by $\omega$) satisfies $|d|=2$. 
There are two possibilities. When $d = +2$, we call the connection \emph{positive definite}. In this case $c_1(Z, \omega) = 2[\omega]$. When $d=-2$, we call the connection \emph{negative definite}. In this case $c_1(Z, \omega) = 0$. 

As we will see, in the setting of Theorem~\ref{main_theorem}, the $S^1$-action on $E$ restricts to a \emph{Hamiltonian} action on $Z$ which in turn implies that $A$ is necessarily a positive definite connection (Lemma~\ref{hamaction} and Corollary~\ref{positive_definite}). In general, a symplectic manifold $(X, \omega)$ for which $c_1(X) = \lambda[\omega]$ in $H^2(X, \Z)$ with $\lambda>0$ is called a \emph{symplectic Fano} (by analogy with algebraic Fano manifolds). An important question is to determine to what extent symplectic Fano manifolds differ from algebraic Fanos. 

In dimension four, a theorem of McDuff \cite{McDuff1990The-structure-o} (building on the substantial works of Gromov \cite{Gromov1985Pseudo-holomorp} and Taubes \cite{Taubes2000Seiberg--Witten}) shows that every closed 4-dimensional symplectic Fano is in fact algebraic. The corresponding question in dimension six---whether or not there exist non-algebraic symplectic Fanos---remains completely open. (In dimensions 12 and higher, \cite{Fine2010Hyperbolic-geom} shows that there are infinitely many symplectic Fano manifolds which are not algebraic.) Whilst the full six-dimensional problem seems extremely hard, there is a weaker version of this question which we would like to state in a form of conjecture.

\begin{conjecture}\label{S1_Fano_question}
Let $(Z,\omega)$ be a six-dimensional symplectic Fano manifold with a non-trivial Hamiltonian $S^1$-action. Then $Z$ is diffeomorphic to a complex algebraic Fano threefold.
\end{conjecture}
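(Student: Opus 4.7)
The plan is to analyse $(Z,\omega)$ through the moment map $\mu\colon Z\to\R$ of the Hamiltonian $S^1$-action, viewed as a perfect Morse--Bott function. Its critical set is the fixed locus $Z^{S^1}$, whose connected components are symplectic submanifolds of even dimension, hence of dimension $0$, $2$, or $4$. The extrema of $\mu$ are connected, and the Fano condition $c_1(Z)=\lambda[\omega]$ with $\lambda>0$ translates into strong positivity: every fixed surface pairs positively with $[\omega]$, and the weights of the $S^1$-representation on the normal bundle at an isolated fixed point are constrained accordingly. These observations reduce the possible combinatorial types of fixed point data to a finite list.

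The second step is to extract enough numerical invariants to match a known Fano threefold. Atiyah--Bott--Berline--Vergne localization applied to $1$, $c_1^3$, and $c_1c_2$ expresses $\chi(Z)$, the degree $\int_Z c_1^3$, and the characteristic number $\int_Z c_1c_2$ in terms of weights and local Chern classes at fixed components. Combined with integrality and the Fano relation, these invariants should pin down the admissible tuples of local data, and by comparison with the Mori--Mukai classification of smooth Fano threefolds one hopes to identify a candidate algebraic Fano $Y$ with the same equivariant topological bookkeeping.

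The third step is to upgrade this numerical match to a genuine diffeomorphism. The natural tools are equivariant symplectic cutting along regular level sets of $\mu$, together with iterated equivariant symplectic blow-ups and blow-downs at fixed components, in the spirit of the work of Karshon and Tolman on Hamiltonian circle actions in low dimensions. The gradient flow of $\mu$ with respect to an $S^1$-invariant compatible metric foliates $Z\setminus Z^{S^1}$ and yields a handle-type decomposition; the Fano constraint should severely restrict the elementary equivariant birational modifications that can occur, so that $Z$ is realised as a controlled sequence of blow-ups of a homogeneous model such as $\P^3$, the quadric, or $\P^1\times\P^1\times\P^1$.

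I expect the last step to be the main obstacle. Unlike the four-dimensional case, where the results of McDuff and Taubes translate symplectic data directly into algebraic geometry, no comparable bridge is available in dimension six, so the candidate algebraic $Y$ must effectively be reconstructed by hand from the fixed point data. A further difficulty is that when $b_2(Z)$ is large, or when $Z^{S^1}$ contains a component of higher genus, the combinatorics of admissible fixed point configurations is extensive and there is no canonical minimal model to reduce to; a complete proof would likely require a delicate induction governed by the Fano degree $\int_Z c_1^3$ and a case analysis in the spirit of Mori--Mukai themselves.
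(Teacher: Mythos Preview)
The statement you are attempting to prove is labelled \emph{Conjecture} in the paper, and the paper does not prove it. The authors state explicitly that it remains open in general; they prove only the very special case in which $(Z,\omega)$ arises as the twistor space of a positive definite connection over a 4-manifold (their Theorem~\ref{main_theorem}), and they cite partial results of McDuff and Tolman (the case $b_2(Z)=1$) and work-in-progress of Cho (the semi-free case). So there is no proof in the paper for you to compare against.

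Your proposal is not a proof but a research programme, and you say as much yourself. The first two steps --- Morse--Bott analysis of the moment map and localisation to compute $\chi$, $c_1^3$, $c_1c_2$ --- are standard and will indeed constrain the fixed-point data. The genuine gap is exactly where you locate it: step three. In dimension six there is no analogue of the McDuff--Taubes machinery that would force a symplectic manifold with the right numerical invariants to be diffeomorphic to an algebraic Fano, and equivariant cutting or blow-up/blow-down does not by itself give you a classification up to diffeomorphism without a minimal-model theory that does not currently exist symplectically. Concretely, even if localisation pins down all the equivariant Chern numbers, you have not explained why two symplectic Fano 6-manifolds with identical fixed-point data must be diffeomorphic, nor why the list of admissible data coincides with the Mori--Mukai list rather than being strictly larger. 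Until that bridge is built, what you have written is a plausible outline of an attack, not a proof.
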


One can see Theorem~\ref{main_theorem} as a confirmation of this conjecture 
in a partial case. At the same time Theorem~\ref{main_theorem} can be 
deduced from Conjecture \ref{S1_Fano_question} using the classification 
of complex projective Fano threefolds. A result of McDuff  \cite{McDuff2009Some-6-dimensio} and Tolman \cite{Tolman2010On-a-symplectic} shows that Conjecture \ref{S1_Fano_question} holds in the case $H^2(Z,\mathbb R)=\mathbb R$. Recently we learned from Yunhyung Cho 
that from his work-in-progress it follows that the conjecture holds 
in the case when the $S^1$-action on $Z$ is semi-free. We discuss some variations of this conjecture in \S\ref{conclusion}.

\subsection{Relation with Riemannian geometry}\label{rel_Riem}

One way to produce definite connections is to start with an oriented Riemannian 4-manifold $(M,g)$ and consider the bundle $\Lambda^+ \to M$ of self-dual 2-forms. The Levi-Civita connection induces a metric connection in $\Lambda^+$ and definiteness of this connection is equivalent to a certain inequality on the curvature tensor of $(M,g)$. This inequality is explained in detail in \cite{Fine2009Symplectic-Cala}. As a consequence, Theorem~\ref{main_theorem} gives a strong restriction on Riemannian 4-manifolds with isometric $S^1$-action whose curvature satisfies this inequality. To put it precisely:

\begin{corollary}\label{Riemannian_cor}
Let $(M,g)$ be closed oriented Riemannian 4-manifold with an isometric $S^1$-action. Suppose that the curvature of $(M,g)$ satisfies the following inequality
\begin{equation}\label{Riemannian_ineq}
\left(W^+ + \frac{s}{12}\right)^2 > \Ric_0^* \Ric_0
\end{equation}
Then $M$ is diffeomorphic to $S^4$ or $\overline{\C\P}^2$, and $W^++\frac{s}{12}>0$.
\end{corollary}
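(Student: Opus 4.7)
The plan is to derive this corollary directly from Theorem~\ref{main_theorem} applied to the bundle $\Lambda^+ \to M$ of self-dual 2-forms equipped with its Levi-Civita connection. The key input is the equivalence, established in \cite{Fine2009Symplectic-Cala}, between the pointwise curvature inequality \eqref{Riemannian_ineq} and the statement that the Levi-Civita connection on $\Lambda^+$ is definite as an $\SO(3)$-connection. Granted this dictionary, the corollary becomes essentially a bookkeeping exercise.

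First I would observe that since the $S^1$-action on $(M,g)$ is by isometries, its natural lift to $\Lambda^+$ (via the action of the differential on 2-forms) sends fibres to fibres by linear isometries and preserves the Levi-Civita connection. Combined with the equivalence above, this shows that $M$ admits an $S^1$-invariant definite connection in the sense of the definition preceding Theorem~\ref{main_theorem}. Theorem~\ref{main_theorem} then immediately yields that $M$ is diffeomorphic to $S^4$ or $\overline{\C\P}^2$.

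For the final claim $W^+ + \frac{s}{12} > 0$, I would appeal to Corollary~\ref{positive_definite}, which states that any $S^1$-invariant definite connection must be positive definite. The two cases, positive definite and negative definite, correspond under the correspondence of \cite{Fine2009Symplectic-Cala} to the two possible signs of the operator $W^+ + \frac{s}{12}$ acting on $\Lambda^+$; thus positive definiteness of the Levi-Civita connection is exactly the sign condition $W^+ + \frac{s}{12} > 0$.

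The only genuine subtlety is the sign convention linking the normal degree $d = \pm 2$ of the twistor fibres in $(Z,\omega)$ with the sign of $W^+ + \frac{s}{12}$ — this is what distinguishes $\overline{\C\P}^2$ from $\C\P^2$ in the statement and explains why we get strict positivity rather than strict negativity of $W^+ + \frac{s}{12}$. This sign matching is already settled in \cite{Fine2009Symplectic-Cala}, so no further curvature computation is needed; the corollary should follow by quoting the three ingredients above (the dictionary, Theorem~\ref{main_theorem}, and Corollary~\ref{positive_definite}) in sequence.
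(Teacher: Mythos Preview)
Your proposal is correct and matches the paper's intended argument: the corollary is stated there as an immediate consequence of Theorem~\ref{main_theorem} via the dictionary from \cite{Fine2009Symplectic-Cala} identifying inequality~\eqref{Riemannian_ineq} with definiteness of the Levi-Civita connection on $\Lambda^+$, together with Corollary~\ref{positive_definite} for the sign of $W^+ + \tfrac{s}{12}$. The only detail you leave implicit (as does the paper) is that a non-trivial isometric $S^1$-action can be assumed faithful after quotienting by its kernel, so that Definition~1.2 applies.
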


Here, $s$ is the scalar curvature, $W^+ \colon \Lambda^+ \to \Lambda^+$ is the self-dual Weyl curvature and $\Ric_0$ is the trace-free Ricci curvature, suitably interpreted as a linear map $\Lambda^+ \to \Lambda^-$. The two sides of inequality~(\ref{Riemannian_ineq}) are thus self-adjoint endomorphisms of $\Lambda^+$ and the inequality asks that the difference be positive definite.

By Theorem 3.12 of \cite{Fine2009Symplectic-Cala}, if $(M,g)$ satisfies both (\ref{Riemannian_ineq}) and, in addition, $W^++s/12>0$ (but without the need to impose $S^1$-symmetry) then $M$ is \emph{homeomorphic} to a connected sum $n \overline{\C\P}^2$ where $n=0,1,2,3$. 

Inequality (\ref{Riemannian_ineq}), was analysed in \cite[Section 3]{Fine2009Symplectic-Cala}. It is explained there that
inequality (\ref{Riemannian_ineq})  defines four open components in the space 
of (pointwise) Riemann tensors. Two of these components seem to be more geometric, 
since there are compact manifolds whose Riemann tensor at each point belongs to
these components. The first geometric component 
is obtained from inequality (\ref{Riemannian_ineq})  by adding 
to it the inequality $W^++\frac{s}{12}>0$; the corresponding compact 
manifolds are $S^4$ or $\overline{\C\P}^2$. The other geometric component 
is contained in the cone or $W^++\frac{s}{12}<0$ and the compact manifolds 
are, for example, real and complex hyperbolic four manifolds. By \cite[Theorem 3.7]{Fine2009Symplectic-Cala}, 
the Riemann tensor of a 4-manifold whose sectional curvature is $\frac{5}{2}$-pinched lies in one of the two components.

It is interesting to compare Corollary \ref{Riemannian_cor} with the following theorem, whose
first part is due to Hsiang--Kleiner \cite{Hsiang1989On-the-topology} and second  is due to Grove--Wilking \cite{Grove2013A-knot-characte}.

\begin{theorem}\label{HKGW}
Let $(M,g)$ be a compact oriented 4-manifold with positive curvature and a non-trivial isometric $S^1$-action. Then 
\begin{itemize}
\item $M$ is diffeomorphic to $S^4$ or $\C\P^2$ \cite{Hsiang1989On-the-topology} and
\item This diffeomorphism can be chosen so that the $S^1$-action is linear on either $S^4$ or $\C\P^2$ \cite{Grove2013A-knot-characte}.
\end{itemize}
\end{theorem}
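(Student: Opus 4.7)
My plan is to recover the diffeomorphism type of $M$ from the fixed point set $F = M^{S^1}$ of the circle action, using positive curvature to force strong constraints both on $F$ and on the orbit space $M/S^1$. Berger's theorem guarantees that any isometry of a closed even-dimensional positively curved manifold has a fixed point, so $F$ is non-empty; its connected components are totally geodesic closed submanifolds of even codimension, which in dimension four means isolated fixed points together with closed surfaces. By the Gauss equation, each such surface inherits positive sectional curvature from $M$, so as $M$ is oriented each surface component is diffeomorphic to $S^2$.

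Next I would classify the possible configurations of $F$. Frankel's theorem prevents two disjoint totally geodesic surfaces from coexisting in a positively curved 4-manifold, so at most one $S^2$-component occurs. The fixed-point formula $\chi(M) = \chi(F)$, together with the constraints coming from the induced linear action on the normal bundles of the components of $F$, gives a short list of numerical possibilities. I would then recover $M$ from the orbit space $M/S^1$, which is a 3-dimensional Alexandrov space whose boundary, interior singular strata, and isotropy data are encoded by $F$ and the weights. Matching this against the standard circle actions on $S^4$ (the suspension of a rotation of $\R^4$) and $\C\P^2$ (a toric weight) one identifies $M$ diffeomorphically in each case.

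For the stronger Grove--Wilking statement the diffeomorphism must be chosen equivariantly. Here I would invoke the classification of smooth $S^1$-actions on simply connected 4-manifolds due to Fintushel and Orlik--Raymond, by which such actions are determined up to equivariant diffeomorphism by a combinatorial datum consisting of weights at the fixed set together with a framed knot in $S^3$ recording the one-dimensional isotropy stratum. The thrust of Grove--Wilking's argument is that in the positively curved setting this knot must be the unknot, which reduces the action to the standard linear model. The main obstacle is precisely this final rigidity step: positive curvature has to be used geometrically, via Toponogov comparison and soul-type arguments, to force triviality of the knot invariant; the rest of the argument is more combinatorial, built on classical transformation-group techniques applied to the short list of possible fixed configurations.
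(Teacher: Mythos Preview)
The paper does not prove this theorem; it is quoted from the literature, with the two bullets attributed to Hsiang--Kleiner and to Grove--Wilking respectively, and only the parenthetical remark that Perelman's resolution of the Poincar\'e conjecture upgrades Hsiang--Kleiner's homeomorphism to a diffeomorphism. So there is no ``paper's own proof'' to compare your proposal against.

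That said, your sketch is a reasonable summary of how those two papers actually proceed, and the ingredients you name (Berger's fixed-point theorem, totally geodesic components, Gauss equation forcing $S^2$, Frankel's intersection theorem, the orbit space as an Alexandrov space, Fintushel's classification, and the knot invariant) are the right ones. The place where your outline is genuinely thin is the passage from ``at most one $S^2$ and some isolated points'' to $\chi(M)\le 3$: Frankel rules out two surfaces, but bounding the number of isolated fixed points is the heart of Hsiang--Kleiner's argument and requires real work with comparison geometry in the orbit space, not just the fixed-point formula and normal weights. Likewise, for the Grove--Wilking part you correctly isolate the unknotting step as the crux, but ``Toponogov comparison and soul-type arguments'' is a placeholder for a substantial piece of Alexandrov geometry rather than an argument. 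As an outline of the literature your proposal is accurate; as a proof it would need those two steps filled in.
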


(In \cite{Hsiang1989On-the-topology}, Hsiang--Kleiner state the result up to homeomorphism, but their work gives a diffeomorphism when combined with Perelman's resolution of the Poincaré conjecture.)

Our results, Theorem~\ref{main_theorem} and Corollary~\ref{Riemannian_cor}, are weaker in the sense that we can not prove an analogue of Grove and Wilking's result. At the same time, our proof is almost metric-free; instead of Riemannian geometry it uses methods of symplectic geometry. Also, it is worth pointing out that a Riemann tensor that satisfies inequalities (\ref{Riemannian_ineq}) and $W^++s/12>0$ can have some negative sectional curvatures.

\subsection{Outline of the proof}

The proof of Theorem~\ref{main_theorem} relies on a theorem of Fintushel  \cite{Fintushel1978Classification-}, that the only simply connected 4-manifolds which admit non-trivial circle actions are the connected sums of copies of $S^2 \times S^2$, $\C\P^2$ and $\overline{\C\P}^2$. We will prove that when $M$ admits an $S^1$-invariant definite connection, it is simply connected and that either $\chi(M)=2$ or $\chi(M)=3$ and the signature of $M$ is $-1$. Theorem~\ref{main_theorem} then follows from Fintushel's result.

To do this we will study the fixed locus $M^{S^1}$ of the $S^1$-action as well as the larger set $M^\st$ of points with non-trivial stabiliser. By \cite[Proposition~3.1]{Fintushel1977Circle-actions-} in the case when $\pi_1(M)=0$,  the connected components of $M^\st$ are either isolated points, chains of 2-spheres or circles of 2-spheres. These components come decorated with integers from which the topology of $M$ can be determined. This background is reviewed in \S\ref{review_circle_actions}.  We call a connected component of $M^\st$ together with its decoration a \emph{pattern} in $M$. The goal then is to understand exactly what combination of patterns are possible.

The first step is to show that the induced $S^1$-action on $Z$ is Hamiltonian. As is explained in \S\ref{adjunction_section}, the spheres in the patterns lift to give symplectic spheres in $Z$ which are preserved by the Hamiltonian $S^1$-action. By studying how the Hamiltonian varies along these symplectic spheres we link the symplectic geometry of the $S^1$-action on $Z$ to the topology of the $S^1$-action on $M$. 

Our argument can be informally summarised as follows. The value of the Hamiltonian at a fixed point in $Z$ is determined by the \emph{local} geometry of the $S^1$-action at the fixed point downstairs in $M$. Meanwhile a Hamiltonian generating an $S^1$-action satisfies a strong constraint: any local maximum is necessarily a global maximum and the locus of such points is connected. This in turn gives strong constraints on the \emph{global} geometry of the $S^1$-action on $M$. With some effort  this is enough to completely determine the possible patterns and hence the diffeomorphism type of $M$. A crucial role is played here by three equations, two coming from the $G$-signature theorem on 4-manifolds and explained in Theorem~\ref{signature_constraint_theorem} and one coming from the study of definite connections, given in Proposition~\ref{adjunction_inequality}. These give additional constraints on the $S^1$-action in terms of global topological quantities. The proof works by playing these constraints off against those coming from the Hamiltonian function on $Z$.

The article is organised as follows. In \S\ref{review_circle_actions} we review the necessary parts of the theory of $S^1$-actions on 4-manifolds, the description of $M^\st$ and the result of Fintushel mentioned above. In \S\ref{review_def_conns} we review some special features of the symplectic manifolds arising from definite connections. \S\ref{first_look} proves that the $S^1$-action on $Z$ is Hamiltonian and describes how the local geometry of the action on $Z$ is determined by the local geometry of the action on $M$. In \S\ref{non-isolated} we prove the main result in the special case when there are non-isolated fixed points in $M$. In this situation it is possible to use the results of \S\ref{first_look} to give a direct proof. When all the fixed points in $M$ are isolated the argument is more involved. First, \S\ref{isolated_admissible_patterns} gives a complete characterisation of all the possible patterns. Then \S\ref{isolated_end_of_proof} examines which patterns can occur simultaneously. This gives a complete description of the possibilities for $M^\st$, ultimately proving Theorem~\ref{main_theorem}.

\subsection{Acknowledgements}

The article \cite{Wright2011Compact-anti-se} of Dominic Wright was the inspiration for some of the considerations in this article and we would like to thank him for discussing his work with us. We would also like to thank Leonor Godinho, Dusa McDuff, Anton Petrunin, Igor Rivin, Silvia Sabatini, and Wolfgang Ziller for useful discussions.

\section{Preliminaries}

\subsection{A review of $S^1$-actions on 4-manifolds}\label{review_circle_actions}

\subsubsection{Four manifolds with $S^1$-actions and relative weights}

In this section we will explain how to describe faithful $S^1$-actions on compact oriented 4-manifolds. The foundational work in this area are the articles \cite{Fintushel1977Circle-actions-,Fintushel1978Classification-} of Fintushel and \cite{Pao1978Nonlinear-circl} of Pao. The main result we need from these papers is the following. (Note that Fintushel's theorem was originally conditional on the truth of the Poincare conjecture in dimension three, subsequently proved by Perelman.)

\begin{theorem}[Fintushel \protect{\cite[Theorem 13.2]{Fintushel1978Classification-}}, Perelman]\label{Fintushel_classification}
Let $M$ be a simply-connected closed 4-manifold with a non-trivial $S^1$-action. Then $M$ is diffeomorphic to either $S^4$ or a connected sum of copies of $S^2 \times S^2$, $\C\P^2$ and $\overline{\C\P}^2$.
\end{theorem}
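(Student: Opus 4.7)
The plan is to analyse the quotient $M^\ast = M/S^1$ as a stratified 3-dimensional space encoding the orbit-type data of the $S^1$-action, and then to reconstruct $M$ from this data and identify it as a connected sum of standard pieces.

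First I would set up the local structure of the action. By the slice theorem, every orbit has an equivariant tubular neighbourhood of standard form, and orbits fall into three types: principal (trivial stabiliser), exceptional (finite cyclic stabiliser $\Z_n$), or fixed. Since the action is faithful, at an isolated fixed point $p$ the representation on $T_pM\cong\C^2$ has coprime integer weights $(a,b)$. Fixed 2-dimensional components are orientable and, because $\pi_1(M)=1$, must be 2-spheres; this follows either from a direct application of the $G$-signature theorem or from a Mayer--Vietoris computation.  The quotient $M^\ast$ is a 3-dimensional topological manifold, with boundary precisely when $M$ contains a fixed 2-sphere.  Combining the fact that the singular locus of $M$ has codimension at least two with the homotopy long exact sequence of the free-part fibration $S^1\to M_{\text{free}}\to M^\ast_{\text{free}}$ yields $\pi_1(M^\ast)=1$. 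Perelman's theorem then identifies $M^\ast$ with either $S^3$ or $D^3$.

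Inside $M^\ast$ the images of exceptional orbits form labelled arcs joining isolated fixed-point images to one another or to fixed-sphere images; the weighted orbit space consists of these arcs together with their isotropy labels and the weights at each isolated fixed point. A standard equivariant classification shows that two simply connected 4-manifolds with $S^1$-action are equivariantly diffeomorphic if and only if their weighted orbit spaces coincide, reducing the problem to a combinatorial enumeration over $S^3$ or $D^3$.

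Finally, to identify $M$ as a connected sum of $S^4$, $\C\P^2$, $\overline{\C\P}^2$, and $S^2\times S^2$, I would perform equivariant connected-sum surgeries along the weighted arcs, splitting off standard linear $S^1$-models on each of these four pieces until the weighted orbit space is trivialised. The key combinatorial input is that weights on adjacent arcs of the orbit invariant are related by a continued-fraction-like identity, so one can systematically reduce any admissible configuration to the empty one by repeatedly peeling off standard summands. The main obstacle is verifying that this reduction always terminates with an explicit identification of $M$ as a connected sum of the four claimed pieces; this is the combinatorial heart of Fintushel's argument and is the step where Perelman's theorem is genuinely required to exclude exotic quotient topology.
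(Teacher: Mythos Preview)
The paper does not prove this theorem; it is quoted with attribution to Fintushel (with Perelman's resolution of the three-dimensional Poincar\'e conjecture removing the original conditionality) and then used as a black box in the proof of Theorem~\ref{main_theorem}. There is therefore no proof in the paper to compare your sketch against. What you have written is, in outline, Fintushel's own argument: analyse the orbit space $M^*=M/S^1$ as a compact 3-manifold with boundary decorated by weighted arcs recording the exceptional and fixed strata, show $\pi_1(M^*)=1$, invoke Perelman to identify $M^*$, and then reconstruct $M$ by equivariant connected-sum surgery along the weighted graph.

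One small correction to your sketch: the claim that fixed surfaces must be 2-spheres is correct, but the reason is not the $G$-signature theorem or a Mayer--Vietoris computation on $M$. The clean argument goes through the quotient: once you know $M^*$ is a compact orientable simply connected 3-manifold with boundary, Poincar\'e--Lefschetz duality (the ``half-lives, half-dies'' principle) forces $H_1(\partial M^*;\Q)=0$, and since each boundary component of $M^*$ is a copy of the corresponding fixed surface, every fixed surface is a sphere.
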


We will also use a description of $S^1$-actions which is very close to that employed by Fintushel (the only difference being the precise choice of integers used). We begin by assigning \emph{relative weights} to fixed points of the $S^1$-action on $M$. Let $S^1$ act by complex linear transformations on $\C^2$. Then there is an irreducible decomposition $\C^2 = L_1 \oplus L_2$ into lines and a pair of integers $a, b$---called the \emph{weights} of the action---such that  $e^{i\theta} \in S^1$ acts as multiplication by $e^{ia\theta}$ on $L_1$ and by $e^{ib\theta}$ on $L_2$. 

Now consider a non-trivial real-linear action of $S^1$ on $\R^4$ with a fixed orientation. There exist exactly two $S^1$-invariant linear complex structures $J$ and $-J$ on $\R^4$, compatible with the orientation. Hence we can talk of the weights of the $S^1$-action as in the previous paragraph, but the weights are defined only up to an overall sign. 

\begin{definition}
Fix an orientation on $\R^4$ and let $S^1$ act linearly on $\R^4$. Given an $S^1$-invariant linear almost complex structure $J$ on $\R^4$ which is compatible with the orientation, we define the \emph{relative weights} of the action to be the weights $a,b$ of the complex-linear $S^1$-action  on $(\R^4, J)$ which are uniquely determined up to overall choice of sign. 

Now let $S^1$ act on an oriented 4-manifold. We define the \emph{relative weights} of a fixed point $p$ to be those of the induced $S^1$-action on $T_pM$.
\end{definition}

\begin{lemma}\label{coprime}
If $S^1$ acts faithfully on a 4-manifold $M$, then the relative weights of any fixed point are coprime. 
\end{lemma}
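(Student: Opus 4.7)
The plan is to argue by contradiction, reducing the question to the equivariant linearisation of the action at a fixed point. Suppose the relative weights $(a,b)$ at a fixed point $p$ satisfy $d := \gcd(|a|,|b|) \geq 2$, and set $\zeta := e^{2\pi i/d} \in S^1$. By the very definition of the weights, $\zeta$ acts as the identity on each of the two weight lines $L_1, L_2 \subset T_pM$, and hence as the identity on all of $T_pM$. The overall sign ambiguity in the relative weights is irrelevant here: switching the invariant complex structure $J$ for $-J$ replaces $(a,b)$ by $(-a,-b)$, which leaves $\gcd(|a|,|b|)$ unchanged.

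Next I invoke the equivariant tubular neighbourhood theorem for smooth actions of compact Lie groups: an $S^1$-invariant neighbourhood of $p$ in $M$ is $S^1$-equivariantly diffeomorphic to a neighbourhood of the origin in $T_pM$, intertwining the given action with the induced linear one. Since $\zeta$ fixes $T_pM$ pointwise, it therefore fixes an entire open neighbourhood of $p$ in $M$.

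To conclude, recall that the fixed locus $M^{\zeta}$ of the finite-order diffeomorphism $\zeta$ is a disjoint union of closed embedded submanifolds of $M$. By the previous step, the component of $M^\zeta$ through $p$ contains an open neighbourhood of $p$, so it has dimension $4$ and is open in $M$. Being simultaneously open and closed in the connected manifold $M$, this component is all of $M$, so $\zeta$ lies in the kernel of the $S^1$-action, contradicting faithfulness.

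The argument is essentially routine once one has the slice theorem in hand; the only point requiring any care is the well-definedness of $\gcd(|a|,|b|)$ under the choice between $J$ and $-J$, which we addressed in the first paragraph. I expect no real obstacle beyond that.
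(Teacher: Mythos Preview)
Your proof is correct and follows essentially the same route as the paper's: both argue that if $d=\gcd(|a|,|b|)>1$ then $\zeta=e^{2\pi i/d}$ fixes $T_pM$ and hence all of $M$, contradicting faithfulness. The only cosmetic difference is in the ``local-to-global'' step: the paper invokes an $S^1$-invariant metric (an isometry fixing a point and its tangent space is the identity), whereas you use the slice theorem together with an open--closed argument on $M^{\zeta}$.
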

\begin{proof}
If the relative weights of a fixed point $p$ are not coprime there is a non-trivial element $e^{i\theta} \in S^1$ which fixes both $p$ and $T_pM$. This means that $e^{i\theta}$ acts trivially on the whole of $M$ (e.g., pick an $S^1$-invariant metric on $M$, then $e^{i\theta}$ is an isometry fixing a point and its tangent space and so must be the identity). It follows that the action is not faithful.
\end{proof}

\subsubsection{Patterns of $S^1$-actions on 4-manifolds}

We now pass from the set of fixed points $M^{S^1}$, to the larger set $M^\st$ of points with non-trivial stabiliser. The connected components of $M^\st$ will occur frequently in our proof of Theorem~\ref{main_theorem} and so we give them a name.

\begin{definition}
A connected component of $M^\st$, together with the relative weights of each fixed point in the component, is called a \emph{pattern}.
\end{definition}

In what follows we assume $\pi_1(M)=0$.  
(We will prove later on that 4-manifolds with $S^1$-invariant definite connections are simply connected.) We give a description of the possible patterns. This is contained in \cite[Proposition 3.1]{Fintushel1978Classification-} but formulated using different terminology.

Let $p \in M^\st$ have finite stabiliser $\Z_m \subset S^1$. Then (when $\pi_1(M)=0$) $p$ lies on a 2-sphere $S \subset M^\st$ which contains exactly two fixed points $p_0, p_1 \in S \cap M^{S^1}$; the remaining points of $S\setminus\{p_0, p_1\}$ all have the same stabiliser $\Z_m$ as $p$; moreover, $p_0, p_1$ each have $\pm m$ as one of their relative weights and $T_{p_j}S$ is the corresponding 2-plane in $T_{p_j}M$ which $S^1$ rotates with speed $m$.

Conversely, if $p$ is a fixed point with relative weights $(a,b)$ neither of which have modulus one, then there are exactly two distinguished 2-planes $L_1, L_2 \subset T_{p}$ which are rotated with speeds $|a|, |b|$ by the $S^1$-action (here we use that neither of the weights are equal to one and that they are coprime). Moreover, there are a pair of 2-spheres $S_1, S_2 \subset M^\st$ which pass through $p$ with $T_pS_j = L_j$ and whose generic points have stabiliser $\Z_{|a|}$ and $\Z_{|b|}$ respectively.

If $p$ is a fixed point with relative weights $(a, b)$ with $|a|=1$ and $|b| >1$ then there is a single 2-sphere in $M^{\st}$ which ends at $p$. Finally, if $p$ is a fixed point with relative weights satisfying $|a|=1=|b|$, then $p$ is isolated in $M^\st$. 

This gives the following four possible types of pattern:
\begin{enumerate}
\item
An \emph{isolated point  of $M^{\st}$} with relative weights $(1,1)$ or $(1,-1)$. 
\item
An \emph{arc of 2-spheres}, i.e., a connected union of spheres from $M^{\st}$ that projects to a topological arc in the 3-manifold $M^* = M/S^1$. Every sphere in the arc contains exactly two points from $M^{S^1}$. Both of the endpoints of the arc in $M^*$ have preimage a fixed point with relative weights $(a,b)$ where either $|a|=1$ or $|b|=1$ (but not both). Note that an arc can be made up of a single sphere. 
\item
A \emph{circle of 2-spheres}, i.e., a connected union of spheres from $M^{\st}$ that projects to a topological circle in $M^*$. For each sphere in the circle both fixed points have relative weights $(a,b)$ with $|a|, |b|>1$. Note that a circle of spheres can not be composed of 
just one sphere. Indeed, 
the $S^1$-fixed point of such a sphere would 
have relative weights $(a,\pm a)$ with $|a|>1$,
which contradicts Lemma \ref{coprime}. 
\item
An entire surface $\Sigma$ fixed pointwise by $S^1$. Such surfaces project isomorphically to a boundary component of $M^*$.
\end{enumerate}

\subsubsection{A numerical constraint and the signature formula}

Not every collection of patterns can occur via an $S^1$-action on a 4-manifold $M$; there is an important numerical constraint which must be satisfied. Moreover, one can also recover the signature directly from the patterns. These two facts are explained in the following theorem. (In \cite{Fintushel1977Circle-actions-,Fintushel1978Classification-}, Fintushel uses different integers than the relative weights to decorate his patterns with and he also obtains a numerical constraint which his integers must satisfy. This is presumably ultimately the same as that of equation (\ref{constraint}) below, although we have not verified this.)

\begin{theorem}\label{signature_constraint_theorem}
Let $S^1$ act faithfully on a closed oriented 4-manifold. Suppose that the action has isolated fixed points $\{p_j\}$ with relative weights $(a_j,b_j)$ and fixed surfaces $\Sigma_k$. Then
\begin{eqnarray}
-\sum_j \frac{1}{a_jb_j} + \sum_k [\Sigma_k]\cdot[\Sigma_k]
	&=&
		0 \label{constraint}\\
\sum_j \frac{a_j^2 + b_j^2}{a_jb_j} + \sum_k [\Sigma_k] \cdot [\Sigma_k]
	&=&
		3 \sigma(M) \label{signature}
\end{eqnarray}
where $\sigma(M)$ is the signature of $M$.
\end{theorem}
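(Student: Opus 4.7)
The plan is to apply the Atiyah--Singer $G$-signature theorem to the $S^1$-action at an element $g = e^{i\theta}$. The equivariant signature
\[
\sigma(g,M) \;=\; \tr\bigl(g \mid H^{2,+}(M;\R)\bigr) - \tr\bigl(g \mid H^{2,-}(M;\R)\bigr)
\]
(defined with respect to an $S^1$-invariant Hodge star) is real-analytic in $g \in S^1$ with $\sigma(1,M) = \sigma(M)$, and for generic $\theta$ the fixed locus $M^g$ coincides with $M^{S^1} = \{p_j\} \sqcup \bigsqcup_k \Sigma_k$. Before writing the localization formula, I would first observe that $S^1$ acts on the normal bundle of each fixed surface $\Sigma_k$ by rotation with weight $\pm 1$: any higher weight $m>1$ would force $\Z_m \subset S^1$ to fix a whole tubular neighborhood of $\Sigma_k$, and hence, being the fixed locus of an isometry of an $S^1$-invariant metric, all of $M$, contradicting faithfulness.

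With the standard sign conventions the $G$-signature localization formula then reads
\[
\sigma(g,M) \;=\; -\sum_j \cot\!\tfrac{a_j \theta}{2}\cot\!\tfrac{b_j \theta}{2} \;+\; \csc^{2}\!\tfrac{\theta}{2}\sum_k [\Sigma_k]\cdot[\Sigma_k].
\]
Each summand is manifestly invariant under $(a_j, b_j) \mapsto (-a_j, -b_j)$ and under flipping the sign of the normal weight on $\Sigma_k$, consistent with the fact that relative weights are only defined up to an overall sign.

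The two identities will then emerge by comparing Laurent expansions of both sides at $\theta = 0$. Using $\cot x = 1/x - x/3 + O(x^3)$ and $\csc^{2} x = 1/x^{2} + 1/3 + O(x^{2})$, one finds
\[
-\cot\!\tfrac{a\theta}{2}\cot\!\tfrac{b\theta}{2} = -\frac{4}{ab\,\theta^{2}} + \frac{a^{2}+b^{2}}{3ab} + O(\theta^{2}), \qquad \csc^{2}\!\tfrac{\theta}{2} = \frac{4}{\theta^{2}} + \frac{1}{3} + O(\theta^{2}).
\]
Since the left-hand side is regular at $\theta = 0$ with value $\sigma(M)$, the coefficient of $\theta^{-2}$ on the right-hand side must vanish, which after dividing by $4$ yields equation~(\ref{constraint}); the constant term must equal $\sigma(M)$, which after multiplying by $3$ yields equation~(\ref{signature}).

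The hard part will be the sign bookkeeping in the localization formula: matching the orientation conventions used to define relative weights with those implicit in the signs of the cotangent and cosecant contributions. Once the conventions are pinned down, everything else---the normal-weight observation, the Laurent expansions, and the analyticity of $\sigma(g,M)$ in $g$---is routine.
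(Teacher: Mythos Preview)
Your proposal is correct and follows essentially the same route as the paper: both apply the Atiyah--Singer $G$-signature formula at a generic element of $S^1$, then read off the two identities from the vanishing of the $\theta^{-2}$ term and the value of the constant term in the Laurent expansion at $\theta=0$. Your explicit remark that the normal weight along each $\Sigma_k$ must be $\pm 1$ by faithfulness is a useful clarification that the paper leaves implicit in its $1/\sin^2(\pi t)$ term.
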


\begin{proof} The second equation follows from a theorem of Bott \cite{Bott1967Vector-fields-a},
which expresses all Pontryagin numbers of a manifold $M$ with an $S^1$-action it terms of the topology of $M^{S^1}$ and the  weights of $S^1$-action 
on the normal bundle to $M^{S^1}$. In particular in case then $\dim(M)=4$ one gets a formula for $p_1(M)=3\sigma(M)$. See also Searle--Yang \cite{Searle1994On-the-topology}.

Both equations can be deduced simultaneously from the equivariant signature theorem of Atiyah--Singer \cite{Atiyah1968The-index-of-el}. For almost all rational numbers $t=m/n$, the equivariant signature theorem applied to the action of $e^{2\pi t i} \in S^1$ gives 
\begin{equation}\label{signature_functional_eq}
\sigma(M) 
= 
- \sum_j \frac{\cos(\pi a_j t) \cos(\pi b_j t)}{\sin (\pi a_j t) \sin(\pi b_j t)}
+
\sum_k \frac{1}{\sin^2(\pi t)} [\Sigma_k] \cdot [\Sigma_k]
\end{equation}
(See equation (18) in the article \cite{Hirzebruch1971The-signature-t} of Hirzebruch; one needs that the only fixed points of $e^{2\pi t i}$ are fixed by the whole of $S^1$.) The right-hand-side of (\ref{signature_functional_eq}) can be written as a Laurent series at $t=0$. In this expansion, the coefficient of $t^{-2}$ must be zero and the constant term equal to $\sigma(M)$, giving the two equations (\ref{constraint}) and~(\ref{signature}).
\end{proof}

\subsection{A review of the symplectic geometry of definite connections}\label{review_def_conns}

\subsubsection{From connections over 4-manifolds to symplectic 6-manifolds}

We now review how a definite connection $A$ in an $\SO(3)$-bundle $E \to M$ over a 4-manifold gives rise to a symplectic structure $\omega_A$ on the total space of the unit sphere bundle $Z \subset E$. The details of this construction can be found in \cite{Fine2009Symplectic-Cala}. 

We first explain how \emph{any} $\SO(3)$-connection $A$ naturally defines a closed 2-form $\omega_A$ on $Z$. Write $V\to Z$ for the vertical tangent bundle of $Z$, i.e., the sub-bundle of $TZ$ containing vectors tangent to the fibres of $Z \to M$. $V$ is an $\SO(2)$-bundle; once an orientation of the fibres of $E$ is fixed, $V$ can be regarded as a $\U(1)$-bundle. The closed 2-form $\omega_A$ is the curvature of a unitary connection $\nabla$ in~$V$. Let $s$ be a section of $V$ and $u$ tangent to $Z$. When $u$ is itself vertical, $\nabla_u s$ is simply given by the Levi-Civita connection on the tangent bundle of the fibre. Meanwhile, the connection $A$ defines a vertical-horizontal splitting $TZ = V \oplus H$. When $u$ is horizontal, parallel transport with respect to $A$ along the projection of $u$ to $M$ identifies nearby fibres of $Z \to M$ and so also their tangent spaces. $\nabla_u s$ is then the ordinary derivative under this identification. We now set $\omega_A = \frac{1}{2\pi i} F_\nabla$, a closed 2-form which restricts to each fibre to give its area form. One can check that the $\omega_A$-complement of $V$ is again $H$, the horizontal distribution of $A$. Moreover the restriction of $\omega_A$ to $H$ can be described in terms of the curvature of $A$. For details see \cite[\S2.1]{Fine2009Symplectic-Cala}.

The main interest in $\omega_A$ is when it is in fact \emph{symplectic}. This corresponds precisely to $A$ being a definite connection:

\begin{proposition}[\protect{\cite[\S2.2]{Fine2009Symplectic-Cala}}]
$(Z, \omega_A)$ is symplectic if and only if $A$ is a definite connection. 
\end{proposition}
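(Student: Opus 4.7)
The plan is to isolate the pointwise non-degeneracy of $\omega_A$---which (since $\omega_A$ is closed by construction) is all that is needed for ``symplectic''---and reduce it to a linear-algebraic condition on the curvature $F_A$. First, using the $\omega_A$-orthogonal splitting $TZ = V \oplus H$ stated in the excerpt, at each $z \in Z$ one has
\[
\omega_A^3|_z = 3\,(\omega_A|_{V_z}) \wedge (\omega_A|_{H_z})^2.
\]
The vertical restriction $\omega_A|_{V_z}$ is the area form on the fibre sphere, hence non-vanishing; so $\omega_A$ is symplectic iff for every $z \in Z$ the 2-form $\omega_A|_{H_z}$ on the 4-dimensional horizontal space $H_z \cong T_{\pi(z)}M$ is non-degenerate. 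A direct computation identifies
\[
\omega_A(u^h, v^h)|_z = \tfrac{1}{2\pi}\,\langle F_A(u,v), z\rangle_{E_x},
\]
where $F_A(u,v) \in \mathfrak{so}(E_x)$ is viewed as an element of $E_x$ via the isomorphism $\mathfrak{so}(E_x) \cong E_x$ coming from the $\SO(3)$-structure. The question therefore becomes: for which $A$ is the 2-form $\sigma_z := \langle F_A(\cdot,\cdot),z\rangle \in \Lambda^2 T_x^*M$ non-degenerate for every $x \in M$ and every non-zero $z \in E_x$?

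Next I would rephrase both sides in linear-algebraic terms. Viewing $F_A$ as a linear map $\Lambda^2 T_xM \to E_x$ with kernel $K$, the connection is definite at $x$ iff $K$ contains no non-zero decomposable bivector, i.e.\ $K$ is disjoint from the Klein quadric in $\Lambda^2 T_xM$. On the other hand, $\omega_A$ is non-degenerate along $Z_x$ iff the annihilator $K^{\perp} = \mathrm{Im}(F_A^*) \subset \Lambda^2 T_x^*M$ contains no non-zero decomposable 2-form, since in dimension four a non-zero 2-form is degenerate iff it is decomposable.

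The main step, and where I expect the real work to lie, is the equivalence of these two conditions. The wedge pairing $\Lambda^2 T_xM \otimes \Lambda^2 T_xM \to \Lambda^4 T_xM$ is, after fixing an orientation, a symmetric bilinear form of signature $(3,3)$ whose null cone is exactly the Klein quadric. A 3-dimensional subspace of $\Lambda^2 T_xM$ avoids the null cone precisely when the restricted form is definite, and by Witt's theorem this holds for $K$ iff it holds for the orthogonal complement $K^\perp$ (a 3-dimensional subspace in signature $(3,3)$ is definite iff its orthogonal complement is definite of the opposite sign). Under the identification $\Lambda^2 T_x^*M \cong \Lambda^2 T_xM$ induced by the same pairing, this orthogonal complement corresponds to the annihilator of $K$, and the Klein quadric in $\Lambda^2 T_x^*M$ is sent to the Klein quadric in $\Lambda^2 T_xM$. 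This closes the equivalence and hence the proof. The only potentially delicate step upstream is the explicit formula for $\omega_A|_H$ in terms of $F_A$, together with the $\omega_A$-orthogonality of $V$ and $H$; both are curvature computations for the induced $\mathrm{U}(1)$-connection on the vertical line bundle $V \to Z$ and are carried out in detail in \cite[\S2.1]{Fine2009Symplectic-Cala}.
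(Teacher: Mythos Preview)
The paper does not supply its own proof of this proposition; it is quoted from \cite[\S2.2]{Fine2009Symplectic-Cala} without argument, so there is nothing in the present paper to compare against.

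Your reduction is sound: the $\omega_A$-orthogonality of $V$ and $H$ gives $\omega_A^3 = 3\,\omega_A|_V \wedge (\omega_A|_H)^2$, the vertical factor never vanishes, and the horizontal factor at $z$ is a non-zero multiple of $\sigma_z = \langle F_A(\cdot,\cdot), z\rangle$. There is one small gap: your Witt-theorem step tacitly assumes $\dim K = 3$, which you have not established. This is easy to repair. Since $E_x$ is $3$-dimensional, $\dim K \geq 3$; conversely, any subspace of $\Lambda^2 T_xM$ of dimension $\geq 4$ must meet the null cone (the restriction of a signature-$(3,3)$ form to a $4$-plane is necessarily indefinite or degenerate), so if $A$ is definite then $\dim K = 3$. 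In the other direction, if $\omega_A$ is symplectic then $F_A^*$ must be injective (otherwise some $\sigma_z$ vanishes identically), which again forces rank~$3$. With this in hand your orthogonal-complement argument goes through.

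That said, the detour through the Klein quadric and Witt's theorem is heavier than needed. Once the formula for $\omega_A|_H$ is known, there is a one-line proof: for fixed non-zero $u \in T_xM$, the linear map $F_A(u,\cdot)\colon T_xM \to E_x$ has $u$ in its kernel, so its rank is at most~$3$; definiteness of $A$ says exactly that this rank \emph{equals}~$3$ for every such $u$, i.e.\ that the image is all of $E_x$, i.e.\ that the image is contained in no hyperplane $z^\perp$. But ``$\mathrm{Im}\,F_A(u,\cdot) \not\subset z^\perp$'' is precisely the statement that $\sigma_z(u,\cdot)\not\equiv 0$, so quantifying over $u$ and $z$ gives non-degeneracy of every $\sigma_z$. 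This more direct route is closer in spirit to the treatment in the original reference and avoids the orthogonal-complement bookkeeping altogether.
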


An important point is that a definite connection $A$ induces a preferred orientation on $M$. This is because the sub-bundle $H \subset Z$ is symplectic, hence oriented by $\omega_A^2$ and this orientation descends to $TM$. Note that this is independent of the choice of orientation of the fibres of $E$; the opposite choice leads to the 2-form $-\omega_A$ and so does not affect $\omega_A^2$. Now the orientation of $M$ determines in turn a preferred orientation of the fibres of $E$: we declare them to be positively oriented if the push-forward of $\omega_A^3$ is a positive 4-form on $M$. From now on, given a definite connection $A$ in $E$ we will use these induced orientations on $M$ and on the fibres of $E$.

The symplectic manifolds $(Z, \omega_A)$ which arise this way are of a very special sort, either ``symplectic Fano'' or ``symplectic Calabi--Yau'':

\begin{proposition}[\protect{\cite[\S2.3]{Fine2009Symplectic-Cala}}]
When $A$ is a definite connection, the fibres of $Z \to M$ are symplectic 2-spheres whose normal degree $d$ satisfies $|d|=2$.
\begin{itemize}
\item
If $d = 2$, we say $A$ is \emph{positive definite}. In this case, $c_1(Z, \omega_A) = 2[\omega_A]$.
\item
If $d = -2$, we say $A$ is \emph{negative definite}. In this case, $c_1(Z, \omega_A) = 0$.
\end{itemize}
\end{proposition}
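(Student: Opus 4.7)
The plan is to verify the three assertions in turn, working locally along a single fibre $F = Z_p$ and then globalising. First, each fibre is a round $2$-sphere in $E_p\cong\R^3$. Since $\omega_A = \tfrac{1}{2\pi i}F_\nabla$ for a unitary connection $\nabla$ on the vertical bundle $V\to Z$, and since $V|_F = TF$, the restriction $\omega_A|_F$ equals the first Chern form of $TS^2$, i.e.\ the (normalised) round area form. In particular $\omega_A|_F$ is symplectic and $\langle[\omega_A],[F]\rangle = 2$.

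For the normal degree $d$, note that along $F$ one has $TZ|_F = TF\oplus N$ with normal bundle $N = H|_F$. As a real bundle $N$ is the trivial bundle $F\times T_pM$, but its complex structure varies: at $z\in F$ it is a $J_z$ tamed by $\omega_A|_{H_z}$. The linear map
\[
\phi\colon E_p\longrightarrow\Lambda^2 T_p^*M,\qquad z\longmapsto\omega_A|_{H_z},
\]
is injective, and by the definite condition its image is a $3$-dimensional subspace consisting of symplectic forms. Pick any auxiliary metric on $T_pM$, decomposing $\Lambda^2 T_p^*M = \Lambda^+_p\oplus\Lambda^-_p$. A non-zero $2$-form $\omega$ is symplectic iff $|\omega_+|\neq|\omega_-|$; since $\phi(E_p)\setminus\{0\}$ is connected, the orthogonal projection from $\phi(E_p)$ to one of $\Lambda^\pm_p$ must be a linear isomorphism, and after rescaling the metric we may assume $\phi(E_p)$ coincides with $\Lambda^+_p$ or with $\Lambda^-_p$. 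In the former ``positive'' case, $F$ is identified with the twistor sphere $S(\Lambda^+_p)$ parametrising compatible orthogonal complex structures of one handedness on $T_pM$, and a standard spinorial calculation (using $T_pM\otimes\C \cong \s_+\otimes\s_-$ and the tautological decomposition of $\s_+$ over $\P(\s_+)$) yields $c_1(N)[F] = +2$; the negative case is symmetric, giving $d = -2$.

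For the global Chern class, write $c_1(TZ) = c_1(V) + c_1(H) = [\omega_A] + c_1(H)$, where the complex structure on $H$ is the family $\{J_z\}_{z\in Z}$ above. Rational Leray-Hirsch (valid because $[\omega_A]$ restricts non-trivially to each fibre) gives $H^2(Z;\Q) = \pi^*H^2(M;\Q)\oplus\Q\cdot[\omega_A]$, so $c_1(H) = \tfrac{d}{2}[\omega_A] + \pi^*\beta$ for some $\beta \in H^2(M;\Q)$. To kill $\beta$ I would compare against the Pontryagin class $p_1(H) = \pi^*p_1(TM)$ (since the underlying real bundle of $H$ is $\pi^*TM$), use $p_1(H) = c_1(H)^2 - 2c_2(H)$, and exploit that $c_2(H)$ is the Euler class of the underlying real oriented rank-$4$ bundle, itself $\pi^*e(TM)$ up to sign. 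These relations pin down $\beta = 0$, so $c_1(TZ) = (1 + d/2)[\omega_A]$, giving $2[\omega_A]$ in the positive case and $0$ in the negative case.

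The main obstacle I expect is this last step: the complex structure on $H$ genuinely depends on the point $z\in Z$, so the identification $c_1(H) = (d/2)[\omega_A] + \pi^*\beta$ is an interpolation across a non-constant family rather than a topological tautology. Ensuring that $\pi^*\beta$ vanishes cleanly---and in particular tracking signs between the $J_z$-complex orientation on $H$ and the base orientation on $TM$ induced by $\omega_A$---is where the delicate book-keeping lives.
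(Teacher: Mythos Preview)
The paper does not prove this proposition; it is quoted from \cite{Fine2009Symplectic-Cala}. That said, the paper does record (in the proof of Lemma~\ref{vertical_weights}) the single fact from which the Chern-class statement follows cleanly, and this is where your argument diverges from the intended one.

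Your treatment of the first two assertions is essentially the standard twistor-theoretic argument and is fine: the fibrewise restriction of $\omega_A$ is the curvature of $TS^2$, and the identification of the image of $\phi$ with a self-dual or anti-self-dual $3$-plane (for a suitable conformal structure on $T_pM$) recovers the twistor fibre, whence $|d|=2$.

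The gap is exactly where you flag it. Writing $c_1(H) = \tfrac{d}{2}[\omega_A] + \pi^*\beta$ and then trying to kill $\beta$ via $p_1(H) = c_1(H)^2 - 2c_2(H)$ does not close by itself: expanding $c_1(H)^2$ produces an $[\omega_A]^2$ term, and the relation expressing $[\omega_A]^2$ in the Leray--Hirsch basis involves the characteristic classes of the $\SO(3)$-bundle $E$, not just those of $TM$. So your single identity unpacks into equations that mix $\beta$ with data about $E$ that you have not introduced, and the bookkeeping you allude to is genuinely missing rather than merely tedious.

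The approach actually used in \cite{Fine2009Symplectic-Cala}, and invoked later in this paper, bypasses all of this. One constructs an explicit $\omega_A$-compatible almost complex structure $J$ on $Z$ for which the splitting $TZ = V \oplus H$ is complex-linear and, crucially, $V \cong \Lambda^2 H$ as complex line bundles in the positive case (respectively $V^* \cong \Lambda^2 H$ in the negative case). This immediately yields $c_1(H) = c_1(V) = [\omega_A]$ (respectively $c_1(H) = -[\omega_A]$), and hence $c_1(TZ) = c_1(V) + c_1(H) = 2[\omega_A]$ or $0$. No Leray--Hirsch, no mystery class $\beta$, and the sign issues you worry about are absorbed into the construction of $J$ itself.
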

(Here the normal degree is defined with respect to an almost complex structure tamed by $\omega$; the result does not depend on the choice, which lies in a contractible set.)

It is an interesting question to ask which 4-manifolds admit definite connections. To date, there is only one known obstruction due to Derdzinski and Rigas, an inequality involving the Euler characteristic $\chi$ and the signature  $\sigma$ (defined with respect to the orientation on $M$ induced by $A$). 

\begin{proposition}[\cite{Derdzinski1981Unflat-connecti}, see also \protect{\cite[\S2.3]{Fine2009Symplectic-Cala}}] 
Let $M$ be a compact 4-manifold with a definite connection. Then $2\chi(M)+3\sigma(M)>0$.
\end{proposition}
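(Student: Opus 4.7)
My approach is to exploit the symplectic structure on $(Z,\omega_A)$: the symplectic volume $\int_Z \omega_A^3$ is manifestly positive, and the plan is to express it as a positive multiple of $2\chi(M)+3\sigma(M)$.

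First, carry out a fibre integration of $\omega_A^3$. Using the vertical/horizontal splitting $TZ = V \oplus H$ (which, as noted in the excerpt, is also the $\omega_A$-symplectic splitting), write $\omega_A = \omega_V + \omega_H$ with $\omega_V \in \Lambda^2 V^*$ the fibrewise area form and $\omega_H \in \Lambda^2 H^*$ the horizontal part, which satisfies $\omega_H|_u = \langle F_A, u\rangle$ at $u \in Z$. Because $V$ has rank $2$ and $H$ has rank $4$, only the cross term survives: $\omega_A^3 = 3\,\omega_V \wedge \omega_H^2$. Integrating along a fibre $S^2 \subset E_p$, with orthonormal frame $e_1,e_2,e_3$ of $E_p$ and $u = \sum u_i e_i$, the quadratic expression $\omega_H^2|_u = \sum_{i,j} u_i u_j\, F_A^i \wedge F_A^j$ averages to a positive constant times $\sum_i F_A^i \wedge F_A^i$ (since $\int_{S^2} u_i u_j \propto \delta_{ij}$). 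This last $4$-form is a Chern--Weil representative of $p_1(E)$ up to a positive universal constant, so $\int_Z \omega_A^3 = C \cdot p_1(E)[M]$ for some $C > 0$, and positivity of the symplectic volume forces $p_1(E)[M] > 0$.

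The remaining task is to identify $p_1(E)$ with $2\chi(M)+3\sigma(M)$, for which I would show $E \cong \Lambda^+ TM$ as oriented rank-$3$ real vector bundles (using the orientation on $M$ induced by $A$). Consider the adjoint bundle map $\Phi : E \to \Lambda^2 T^*M$ given by $\Phi(e) = \langle F_A, e\rangle$. Definiteness of $A$ is equivalent to $\Phi$ being injective with image a rank-$3$ subbundle on which the wedge form $\alpha \mapsto \alpha \wedge \alpha$ (signature $(3,3)$, with null cone the decomposable $2$-forms) is definite. The orientation convention --- fibres of $E$ oriented so that $\pi_*\omega_A^3$ is a positive $4$-form --- is calibrated precisely so that $\Phi(E)$ is \emph{positive} definite. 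Since the space of positive-definite $3$-planes in a signature-$(3,3)$ inner product space is the contractible symmetric space $\SO(3,3)/S(\Orth(3)\times\Orth(3))$, any two such subbundles of $\Lambda^2 T^*M$ are isomorphic as oriented rank-$3$ bundles; in particular $E \cong \Phi(E) \cong \Lambda^+ TM$. The standard formula $p_1(\Lambda^+ TM) = p_1(TM) + 2\,e(TM) = 3\sigma(M) + 2\chi(M)$ then yields the inequality. I expect the main obstacle to be the orientation bookkeeping in this last step --- verifying that the sign of the induced orientation on $M$ is exactly the one placing $\Phi(E)$ on the self-dual side of the wedge form --- while the fibre-integration computation and the contractibility argument are essentially routine by comparison.
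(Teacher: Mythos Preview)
The paper does not actually prove this proposition; it is stated as background with references to Derdzinski--Rigas and to \cite[\S2.3]{Fine2009Symplectic-Cala}. Your argument is correct and is essentially the one found in the latter reference: positivity of the symplectic volume $\int_Z \omega_A^3$ gives $p_1(E)[M]>0$ via fibre integration, and the curvature map $\Phi$ identifies $E$ with a maximal positive-definite subbundle of $\Lambda^2T^*M$, hence with $\Lambda^+$ (by contractibility of the fibre $\SO_0(3,3)/\SO(3)\times\SO(3)$), yielding $p_1(E)=p_1(\Lambda^+)=2\chi+3\sigma$. Your caveat about orientation bookkeeping is well placed but ultimately harmless, since $p_1$ is independent of orientation on $E$; the only orientation that matters is the induced one on $M$, and the paper's convention (that $\omega_H^2$ is a positive $4$-form) is exactly what forces $\Phi(E)$ to land on the self-dual side.
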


\subsubsection{An adjunction inequality}\label{adjunction_section}

Our proof of Theorem~\ref{main_theorem} is based on lifting the spheres from $M^\st$ to the symplectic manifold~$Z$. We now describe how to do this. Let $f \colon \Sigma \to M$ be an embedding of an oriented surface. There is natural lift $\tilde{f} \colon \Sigma \to Z$ defined as follows. Let $u,v$ be an oriented basis for $T_\sigma \Sigma$. Since $A$ is definite, the curvature $F_A(f_*u, f_*v)$ generates a non-zero rotation of the fibre $E_{f(\sigma)}$ of $E$. Moreover, since $E$ is oriented, there is a uniquely determined unit-length $p \in E_{f(\sigma)}$ such that $F_A(f_*u, f_*v)$ is a positive multiple of the cross-product with $p$. Setting $\tilde{f}(\sigma) = p$ defines the lift of $f$ to $Z$.

A key fact about these lifts is that their symplectic area is determined entirely by the topology of $\Sigma$ and $f$:

\begin{proposition}[\protect{\cite[\S4.4]{Fine2009Symplectic-Cala}}]\label{area_is_adjunction}
Let $f\colon \Sigma \to M$ be an embedding of an oriented surface. Then
\[
\int_\Sigma \tilde{f}^*\omega_A  = \pm 2\pi\left( \chi(\Sigma) + [f(\Sigma)]\cdot [f(\Sigma)]\right)\
\]
where the sign agrees with that of the definite connection. 
\end{proposition}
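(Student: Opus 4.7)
The plan is to use an adjunction argument on the lifted surface $\tilde f(\Sigma) \subset Z$ together with the two Chern-class facts about $(Z,\omega_A)$ already recorded in the paper: firstly, that $\omega_A$ is the Chern--Weil representative of $c_1(V)$, so $\int_S \omega_A = 2\pi\, c_1(V)[S]$ for every closed oriented surface $S \subset Z$; secondly, that $c_1(TZ,\omega_A)$ equals $2[\omega_A]$ in the positive definite case and $0$ in the negative definite case.

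The geometric ingredient is a splitting of $TZ$ along $\tilde f(\Sigma)$. The vertical--horizontal decomposition $TZ = V \oplus H$ and the horizontal identification $H|_{\tilde f(\Sigma)} \cong f^*TM = T\Sigma \oplus N_{\Sigma/M}$ combine to give an isomorphism of oriented real rank-$6$ bundles
\[
TZ|_{\tilde f(\Sigma)} \;\cong\; \tilde f^* V \;\oplus\; T\Sigma \;\oplus\; N_{\Sigma/M}.
\]
All three summands are oriented of rank $2$ and so carry canonical complex structures; since first Chern classes are additive across Whitney sums, this yields
\[
c_1(TZ)[\tilde f(\Sigma)] \;=\; c_1(\tilde f^*V)[\Sigma] \;+\; \chi(\Sigma) \;+\; [f(\Sigma)]\cdot[f(\Sigma)].
\]

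Feeding in the two Chern-class inputs finishes the proof. In the positive definite case, $c_1(TZ)[\tilde f(\Sigma)] = 2\,c_1(V)[\tilde f(\Sigma)] = 2\,c_1(\tilde f^*V)[\Sigma]$, so the identity above collapses to $c_1(\tilde f^*V)[\Sigma] = \chi(\Sigma) + [f(\Sigma)]\cdot[f(\Sigma)]$, and hence $\int_\Sigma \tilde f^*\omega_A = 2\pi\bigl(\chi(\Sigma)+[f(\Sigma)]\cdot[f(\Sigma)]\bigr)$. In the negative definite case $c_1(TZ)[\tilde f(\Sigma)] = 0$ forces $c_1(\tilde f^*V)[\Sigma] = -\bigl(\chi(\Sigma)+[f(\Sigma)]\cdot[f(\Sigma)]\bigr)$, giving $\int_\Sigma \tilde f^*\omega_A = -2\pi\bigl(\chi(\Sigma)+[f(\Sigma)]\cdot[f(\Sigma)]\bigr)$. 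In both cases the sign agrees with that of the definite connection, as claimed. No substantive obstacle arises beyond the bookkeeping of the orientation conventions relating $[\omega_A]$ to $c_1(V)$; the delicate point is simply checking that the oriented splittings of $TZ|_{\tilde f(\Sigma)}$ produced by the vertical--horizontal decomposition and by the normal-bundle-to-tangent decomposition of $\tilde f(\Sigma)\subset Z$ induce the same orientation on $TZ$, so that the additivity of $c_1$ may be freely applied.
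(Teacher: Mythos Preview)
The paper does not supply its own proof of this proposition; it is quoted verbatim from \cite[\S4.4]{Fine2009Symplectic-Cala}, so there is nothing in the present text to compare your argument against line by line. Your adjunction approach---using $[\omega_A]=c_1(V)$ (up to the $2\pi$ normalisation), the identity $c_1(TZ)=c_1(V)+c_1(H)$ coming from the complex splitting $TZ=V\oplus H$, and $V\cong \Lambda^2 H$ (positive case) or $V^*\cong\Lambda^2 H$ (negative case)---is the natural one and reproduces the result correctly.

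One point deserves sharpening. You write that the ``delicate point'' is an orientation check so that additivity of $c_1$ applies. But $c_1$ is not additive across an \emph{oriented real} splitting; it is additive across a \emph{complex} splitting. The decomposition $TZ=V\oplus H$ is genuinely complex-linear for the almost complex structure $J$ of \cite[Definition~2.9]{Fine2009Symplectic-Cala}, so $c_1(TZ)=c_1(V)+c_1(H)$ holds on $Z$. The remaining issue is to compute $c_1(H)[\tilde f(\Sigma)]$: the bundle $H|_{\tilde f(\Sigma)}\cong f^*TM$ carries the complex structure induced by $J$, and there is no a~priori reason for $T\Sigma\subset f^*TM$ to be $J$-invariant. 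What saves you is that over a surface any two orientation-compatible complex structures on an oriented real rank-$4$ bundle are homotopic (the fibre $\SO(4)/\U(2)\cong S^2$ is connected and simply connected), so $c_1(H|_{\tilde f(\Sigma)})$ may equally well be computed using the block complex structure on $T\Sigma\oplus N_{\Sigma/M}$, giving $\chi(\Sigma)+[f(\Sigma)]\cdot[f(\Sigma)]$. With that observation made explicit, your proof is complete.
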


This has the following important corollary, which is a direct analogue of the adjunction inequality of \cite[\S4.4]{Fine2009Symplectic-Cala}.

\begin{proposition}\label{adjunction_inequality} Let $E \to M$ be an $\SO(3)$-bundle and $A$ a positive definite connection in $E$. Suppose that $\Z_n$ acts on $E$ by fibrewise linear isometrics and preserves $A$. If $\Sigma \subset M$ is an orientable surface fixed pointwise by $\mathbb Z_n$, then
\[
\chi(\Sigma) + [\Sigma]\cdot [\Sigma] > 0
\]
Similarly, if $A$ is negative definite, this quantity is negative.
\end{proposition}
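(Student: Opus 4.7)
The strategy is to express $\chi(\Sigma) + [\Sigma]\cdot[\Sigma]$ symplectically via Proposition~\ref{area_is_adjunction} and use $\Z_n$-invariance to force the resulting integral to have a definite sign. Since Proposition~\ref{area_is_adjunction} gives
\[
\int_\Sigma \tilde f^* \omega_A \;=\; \varepsilon \cdot 2\pi\bigl(\chi(\Sigma) + [\Sigma]\cdot[\Sigma]\bigr),
\]
with $\varepsilon = +1$ (resp.\ $-1$) in the positive (resp.\ negative) definite case, it suffices to prove that $\int_\Sigma \tilde f^*\omega_A$ is strictly positive in the positive definite case.

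The first step would be to verify that the lift $\tilde f \colon \Sigma \to Z$ takes values in the fixed locus $Z^{\Z_n}$. The point $\tilde f(\sigma) \in E_{f(\sigma)}$ is characterised entirely by the oriented plane $T_{f(\sigma)}\Sigma$, the curvature $F_A$, and the orientation of $E_{f(\sigma)}$, and each of these is $\Z_n$-invariant: $\Z_n$ fixes $\Sigma$ pointwise, preserves $A$ (hence $F_A$), and preserves the preferred orientation of $E$ (which is determined by $A$). Consequently $\tilde f(\sigma)$ is $\Z_n$-fixed for every $\sigma$, and since $\Z_n$ acts symplectically on $(Z,\omega_A)$, $Z^{\Z_n}$ is a symplectic submanifold of $Z$.

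Next I would decompose the tangent space at $\tilde f(\sigma)$ using the $\Z_n$-equivariant splitting $T_{\tilde f(\sigma)} Z = H \oplus V$ recalled in \S\ref{review_def_conns}. On the horizontal summand $H \cong T_{f(\sigma)} M$, $\Z_n$ fixes $T_{f(\sigma)}\Sigma$ pointwise. On the vertical summand $V = \tilde f(\sigma)^\perp \subset E_{f(\sigma)}$, $\Z_n$ acts as a rotation with axis $\tilde f(\sigma)$, and provided this rotation on $E_{f(\sigma)}$ is non-trivial, $V^{\Z_n}$ reduces to zero. Because $\tilde f(\Sigma) \subset Z^{\Z_n}$, the tangent vectors $\tilde f_* e_j$ to $\tilde f(\Sigma)$ at $\tilde f(\sigma)$ lie in the $\Z_n$-fixed subspace of $T_{\tilde f(\sigma)}Z$, so they have vanishing vertical part and coincide with the horizontal lifts of an oriented basis $e_1,e_2$ of $T_{f(\sigma)}\Sigma$.

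Therefore $\omega_A(\tilde f_* e_1, \tilde f_* e_2)$ is purely horizontal and, by the description of $\omega_A|_H$ in terms of $F_A$, equals a positive multiple of $\langle F_A(e_1, e_2), \tilde f(\sigma)\rangle$, which is strictly positive by the very definition of the lift $\tilde f$. Integrating gives $\int_\Sigma \tilde f^*\omega_A > 0$, whence $\chi(\Sigma) + [\Sigma]\cdot[\Sigma] > 0$ in the positive definite case; the negative definite case is identical up to sign. The main point requiring care is the hypothesis that $\Z_n$ acts non-trivially on the fibres of $E$ above $\Sigma$: without it the vertical contribution need not vanish and no inequality can be extracted. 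This non-triviality is implicit in the statement---if $\Z_n$ acted trivially on $E$ then $\Sigma$ could be arbitrary and the conclusion is obviously false---and in every intended application it holds automatically.
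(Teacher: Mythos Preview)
Your proof is correct and takes essentially the same route as the paper: both observe that the lift $\tilde f(\Sigma)$ is pointwise $\Z_n$-fixed, conclude that $\tilde f^*\omega_A$ is a positive area form, and then invoke Proposition~\ref{area_is_adjunction}. The paper compresses your tangent-space analysis into the single remark that the lifts are components of the fixed set of a finite-order symplectomorphism and hence symplectic; your explicit use of the $H\oplus V$ splitting has the merit of making both the sign of the integral and the implicit hypothesis that $\Z_n$ act nontrivially on the fibres of $E$ over $\Sigma$ more transparent.
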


\begin{proof}
The $\Z_n$-action on $E$ preserves the unit sphere-bundle $Z$ where is acts by symplectomorphisms. Now the lifts $\Sigma_1, \Sigma_2 \subset Z$ of $\Sigma$ (one for each orientation) are also fixed pointwise by $\Z_n$ and hence are symplectic (because they are components of the fixed set of a symplectomorphism of finite order). The result now follows from Proposition~\ref{area_is_adjunction}.
\end{proof}

\section{The $S^1$-action on $Z$}\label{first_look}

\subsection{The action is Hamiltonian}

With the preliminaries in hand, we now begin the proof of Theorem~\ref{main_theorem}. The circle action on $E$ preserves the unit sphere bundle $Z$ where it acts by symplectomorphicms. The first step is to show that this action is Hamiltonian. This follows from a general result about actions which lift to line bundles. This is well-known, but we give a proof for completeness (and lack of an explicit reference).

\begin{lemma}\label{hamaction}
Let $\pi\colon L \to Z$ be a Hermitian line bundle and $B$ a unitary connection such that $\frac{i}{2\pi}F_B = \omega$ is a symplectic form. Suppose that $S^1$ acts on $L$ by fibrewise linear isometries and preserves $A$. Then the induced $S^1$ action on $Z$ is Hamiltonian.
\end{lemma}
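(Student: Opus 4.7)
The plan is to reduce to a standard Cartan-formula computation on the associated principal $\U(1)$-bundle. Let $\pi \colon P \to Z$ denote the unit circle bundle inside $L$. Because $B$ is unitary, it restricts to a real-valued connection 1-form $\alpha$ on $P$ satisfying $\alpha(V) = 1$, where $V$ is the vector field on $P$ generating the structure-group action, and with curvature $d\alpha = \pi^*\omega$ (up to a sign which is fixed by the normalisation $\frac{i}{2\pi}F_B = \omega$). The hypothesis that the lifted $S^1$-action on $L$ is by isometries preserving $B$ translates to an $S^1$-action on $P$ commuting with the $\U(1)$-action and preserving $\alpha$.

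Let $X$ be the generator of the $S^1$-action on $Z$ and $\tilde X$ its lift to $P$. The key step is to define the candidate Hamiltonian by
\[
\tilde H := \alpha(\tilde X) \colon P \to \R.
\]
Because $\tilde X$ commutes with $V$ (the two $S^1$-actions commute) and $\alpha$ is $\U(1)$-invariant, $\tilde H$ is invariant under the fibrewise $\U(1)$-action and hence descends to a smooth function $H \colon Z \to \R$. This is where we use that the action is by \emph{fibrewise linear isometries}, not merely symplectomorphisms of $Z$.

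Now apply Cartan's formula to $\alpha$. Since $\tilde X$ preserves $\alpha$,
\[
0 = \mathcal L_{\tilde X} \alpha = d(\iota_{\tilde X} \alpha) + \iota_{\tilde X} d\alpha = d\tilde H + \iota_{\tilde X} \pi^*\omega.
\]
Because $V$ lies in the kernel of $\pi^*\omega$ and $\pi_* \tilde X = X$, the contraction $\iota_{\tilde X}\pi^*\omega$ equals $\pi^*(\iota_X\omega)$. Combining this with $d\tilde H = \pi^* dH$ yields $\pi^*(dH + \iota_X\omega) = 0$, and since $\pi$ is a submersion we conclude
\[
\iota_X \omega = -dH
\]
on $Z$, so $H$ is a moment map for the $S^1$-action and the action is Hamiltonian.

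There is no real obstacle; the only things to check carefully are the sign conventions relating $F_B$ to $d\alpha$ and the fact that $\tilde H$ genuinely descends to $Z$, both of which are immediate from the isometric, connection-preserving nature of the lift. The argument does not use $\dim Z = 6$ or any specific features of the definite-connection setup, which is why we state it as a standalone lemma about prequantisable Hamiltonian actions.
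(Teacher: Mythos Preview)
Your proof is correct and is essentially the same argument as the paper's: both work on the unit circle bundle $P\subset L$, define the Hamiltonian as the connection $1$-form evaluated on the lifted generator (your $\tilde H=\alpha(\tilde X)$ is, up to the $2\pi i$ normalisation, the paper's function $f$ extracted from $w=\hat v+2\pi i f$), and then apply Cartan's formula to the vanishing Lie derivative $\mathcal L_{\tilde X}\alpha=0$ to obtain $\iota_X\omega=\pm\,dH$. The only differences are cosmetic---your real-valued $\alpha$ versus the paper's $i\R$-valued $B$, and the resulting sign convention---so there is nothing substantively new to compare.
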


\begin{proof}
Let $w$ be the vector field on $L$ generating the $S^1$-action. Then $v=\pi_*(w)$ is the vector field on $Z$ generating the $S^1$-action. Write $\hat{v}$ for the horizontal lift of $v$ to $L$ with respect to $B$. Since $w$ covers $v$, the horizontal component of $w$ is $\hat{v}$. Meanwhile, since $w$ generates fibrewise linear isometries, the vertical component is given by multiplication on each fibre by an imaginary number. Accordingly, we write
\[
w = \hat{v} + 2\pi i f
\]
for some function $f \colon Z \to \R$.

Restricting to the unit circle bundle $P \subset L$ we can think of the connection $B$ as a 1-form with values in imaginary numbers. I.e., $P$ is the principal frame bundle of $L$ and $B$ is a connection 1-form. The flow of $w$ preserves $P$ and, moreover $B$. So $0= L_wB = \diff (B(w)) + \iota_w(\diff B)$. Now, on $P$, the curvature and connection 1-form are related by $\diff B = \pi^* F_B$. Moreover, $B(w) = 2\pi i f$. Hence
\[
2\pi i \diff f  + \iota_v(F_B) = 0
\]
In other words, $\diff f = \iota_v \omega$ and so $v$ is Hamiltonian as claimed.
\end{proof}

\begin{corollary}\label{Z_action_Hamiltonian}
Let $M$ admit an $S^1$-invariant definite connection in an $\SO(3)$-bundle $E \to M$. The restriction of the $S^1$-action to the unit sphere bundle $(Z, \omega_A)$ is Hamiltonian.
\end{corollary}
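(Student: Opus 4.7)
The plan is to deduce the corollary directly from Lemma~\ref{hamaction} applied to the vertical tangent bundle $V \to Z$ (or its dual, depending on sign conventions). As recalled in \S\ref{review_def_conns}, once the fibres of $E$ are oriented---which, in the definite case, is canonical---$V$ becomes a Hermitian line bundle carrying the unitary connection $\nabla$ with $\omega_A = \tfrac{1}{2\pi i}F_\nabla$. Thus $(Z,\omega_A)$ is exactly the kind of symplectic base to which Lemma~\ref{hamaction} applies, provided we can lift the given $S^1$-action on $Z$ to a Hermitian and connection-preserving action on $V$.

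First I would exhibit the lift. By hypothesis, the $S^1$-action on $E$ sends fibres to fibres by linear isometries, hence through elements of $\SO(3)$; differentiating along the fibres gives a fibrewise isometric linear action on $V$. Since each fibrewise element of $\SO(3)$ preserves the orientation of the fibre, the induced transformation of $V$ is $\C$-linear with respect to the fibrewise complex structure determined by the metric and orientation. This defines a lift of the $S^1$-action from $Z$ to $V$ by fibrewise Hermitian isometries.

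The main step is to verify that this lift preserves $\nabla$. The splitting $TZ = V \oplus H$ is $S^1$-invariant, because $V$ is the vertical bundle and $H$ is the horizontal distribution of the $S^1$-invariant connection $A$; hence it suffices to check invariance of $\nabla$ on vertical and horizontal inputs separately. For vertical inputs, $\nabla$ is defined as the Levi-Civita connection of the round fibre, which is automatically preserved by any fibrewise isometry, and so by our lift. For horizontal inputs, $\nabla_u s$ is computed by identifying nearby fibres via $A$-parallel transport; since the $S^1$-action preserves $A$, horizontal lifts of curves in $M$ are $S^1$-equivariant and their parallel transports intertwine the $S^1$-action on fibres, giving invariance of $\nabla$ on $H$ as well.

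The only subtlety I anticipate is bookkeeping of a sign: the review gives $\omega_A = \tfrac{1}{2\pi i}F_\nabla$ while Lemma~\ref{hamaction} is phrased with $\tfrac{i}{2\pi}F_B = \omega$. This is absorbed by applying the lemma to the dual line bundle $V^*$ with the induced connection, on which the $S^1$-action still acts by connection-preserving Hermitian isometries. Lemma~\ref{hamaction} then produces a Hamiltonian function for the $S^1$-action on $(Z,\omega_A)$, proving the corollary.
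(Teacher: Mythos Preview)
Your argument is correct and is exactly the approach taken in the paper: apply Lemma~\ref{hamaction} to the Hermitian line bundle $V \to Z$ with its connection $\nabla$, using that the $S^1$-action lifts to $V$ and preserves $\nabla$. The paper's own proof is terse and simply asserts the lift and its invariance; you have filled in the details (the construction of the lift via fibrewise differentiation, and the separate verification of $\nabla$-invariance on vertical and horizontal inputs), together with the harmless sign bookkeeping.
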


\begin{proof}
This follows from the description of $\omega_A$ as the curvature of the unitary connection $\nabla$ in the Hermitian line bundle $V \to Z$, outlined in \S\ref{review_def_conns}. The $S^1$-action lifts from $Z$ to the $V$ where it preserves $\nabla$, implying the action is Hamiltonian. 
\end{proof}

\begin{corollary}\label{positive_definite}
If $M$ admits an $S^1$-invariant definite connection it must be  \emph{positive} definite.
\end{corollary}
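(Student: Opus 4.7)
My plan is to argue by contradiction: if $A$ were negative definite, then the Hamiltonian $H$ produced by Corollary~\ref{Z_action_Hamiltonian} could not attain a global maximum on $Z$, which is impossible.

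By Corollary~\ref{Z_action_Hamiltonian} the $S^1$-action on $(Z,\omega_A)$ is Hamiltonian, with some moment map $H$. Since $Z$ is compact, $H$ attains a maximum at some $p_+ \in Z^{S^1}$, and at $p_+$ every weight of the $S^1$-representation on $T_{p_+}Z$ must be non-negative. The point $p_+$ projects to some $m_+ \in M^{S^1}$, and the connection yields an equivariant splitting $T_{p_+}Z = V_{p_+} \oplus T^H_{p_+}$ with $T^H_{p_+} \cong T_{m_+}M$ via $\pi_*$. Fix a complex structure $J$ on $T_{m_+}M$ compatible with the orientation of $M$ and let $(a,b)$ be the weights of $S^1$ on $T_{m_+}M$; let $c$ be the rotation weight of $S^1$ on the perpendicular plane in $E_{m_+}$. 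Because $A$ is definite, it gives an $S^1$-equivariant isomorphism $F_A^+ \colon \Lambda^+ T^*_{m_+}M \to E_{m_+}$; matching the weight decompositions of both sides forces $c = \pm(a+b)$, and a careful tracking of orientations (this is where the sign of the normal degree $d=\pm 2$ of the fibres enters) pins down $c = -(a+b)$ in the negative-definite case.

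To read off the three weights on $T_{p_+}Z$ I then use that $\omega_A|_{T^H_p}(u,v) = \langle F_A(u,v), p\rangle_{E_{m_+}}$: this identifies the taming complex structure on $T^H$ at the two poles of $Z_{m_+}$ as $J$ and $-J$ respectively, so that in the negative-definite case the weight triple at $p_+$ is either $(-(a+b), a, b)$ (north pole) or $(a+b, -a, -b)$ (south pole). Requiring all three weights to be non-negative forces $a=b=0$, contradicting the faithfulness of the $S^1$-action on $M$. The alternative cases — that the $S^1$-action on $E_{m_+}$ is trivial (so $c=0$, hence $a+b=0$, and the horizontal weights $(a,-a)$ cannot both be non-negative), or that the maximum is attained on a positive-dimensional fixed component such as a section of $Z$ over a fixed surface in $M^{S^1}$ (handled by the same analysis applied to the normal bundle of that component) — all lead to the same contradiction. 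Hence $A$ is positive definite.

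The principal technical obstacle is the orientation bookkeeping that identifies the negative-definite case with the sign $c=-(a+b)$ rather than $c=+(a+b)$: this requires unwinding the paper's conventions for orienting $M$ (from $\omega_A^2$ on the horizontal distribution) and for orienting the fibres of $E$ (from $\pi_*\omega_A^3$), together with the description of the taming complex structure on $T^H$ via the curvature formula. Once this sign is settled, the weight comparison above is a short case check.
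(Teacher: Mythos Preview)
Your argument is correct in outline, but it takes a genuinely different route from the paper. The paper's proof is a one-line citation: if $A$ were negative definite then $c_1(Z,\omega_A)=0$, and a result of Cho asserts that compact symplectic Calabi--Yau manifolds admit no Hamiltonian $S^1$-actions. You instead reprove the relevant special case of Cho's theorem by hand, via a weight count at the maximum of~$H$.

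The comparison is this. Your key input is that, in the negative-definite case, the vertical weight at a fixed point equals minus the sum of the horizontal weights (equivalently $V^*\cong\Lambda^2 H$, which the paper notes parenthetically in the proof of Lemma~\ref{vertical_weights}, citing \cite{Fine2009Symplectic-Cala}). Hence the three weights on $T_{p_+}Z$ sum to zero at \emph{every} fixed point; at a maximum they are all non-negative, so all vanish, forcing the fixed component through $p_+$ to be all of $Z$ and hence the action on $M$ to be trivial. This is exactly the standard mechanism behind the ``no Hamiltonian $S^1$ on symplectic Calabi--Yau'' statement: the sum of the weights at a fixed point is the weight on the anticanonical line, which vanishes when $c_1=0$. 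So your argument is the paper's argument with the black box opened.

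What each approach buys: the paper's is shorter and places the result in the general context of symplectic Calabi--Yaus. Yours is self-contained and explains \emph{why} the obstruction arises in this particular $S^2$-bundle picture. Two small remarks on your write-up: first, once you know the three weights sum to zero, the case analysis (isolated max, fixed fibre, max over a fixed surface) collapses into the single sentence ``all weights non-negative and summing to zero implies all zero'', so you can streamline considerably. Second, you are right that the sign identification $c=-(a+b)$ is the only genuine work; rather than routing it through $F_A^+\colon\Lambda^+ T^*M\to E$, it is cleaner to invoke directly the isomorphism $V^*\cong\Lambda^2 H$ for negative-definite connections established in \cite[Definition~2.9 and \S2.3]{Fine2009Symplectic-Cala}, which gives the sign without further orientation chasing.
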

\begin{proof}
If the connection were negative definite, $Z$ would be a compact symplectic Calabi-Yau manifold. But by \cite{Cho2012Chern-classes-a}, such manifolds do not admit Hamiltonian $S^1$-actions.
\end{proof}

The following two results about Hamiltonian circle actions will be crucial in what follows. 

\begin{lemma}[See \protect{\cite[Lemma 5.51]{McDuff1998Introduction-to}}]
\label{minmax}
Let $H$ be a Hamiltonian on a symplectic manifold $(Z,\omega)$ that generates an $S^1$-action. Then all level sets of $H$ are connected. In particular any local maximum or minimum of $H$ is actually a global maximum or minimum and the two sets $Z_{\max}$  and $Z_{\min}$ where $H$ attains its maximum or minimum respectively are connected.
\end{lemma}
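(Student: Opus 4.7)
The plan is to prove this via Morse--Bott theory. The starting observation is the local normal form for Hamiltonian $S^1$-actions: near any fixed point $p \in Z^{S^1}$ an equivariant Darboux argument gives local symplectic coordinates in which
\[
H = H(p) + \tfrac{1}{2}\sum_k m_k(x_k^2 + y_k^2),
\]
where the integers $m_k$ are the weights of the $S^1$-action on $T_pZ$. Hence $H$ is Morse--Bott with critical set exactly $Z^{S^1}$, and at each connected component $F$ of $Z^{S^1}$ the Hessian has even Morse--Bott index $2n^-(F)$ and even coindex $2n^+(F)$, where $n^\pm(F)$ counts positive/negative weights on the symplectic normal bundle $\nu(F)$.

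The key consequence is a dichotomy: at any component $F$, either all weights have the same sign---so $F$ is a local extremum of $H$---or both signs occur, in which case both the index and coindex of $F$ are at least $2$, being positive even integers. In particular there are no critical components of Morse--Bott index exactly $1$, so standard Morse--Bott handle-attachment implies that the number of connected components of the sublevel set $\{H \leq c\}$ is non-decreasing in $c$: each handle attachment of even index $\geq 2$ is glued along a connected sphere bundle and so cannot merge components. For $c$ slightly above $\min H$ the sublevel set is a tubular neighbourhood of $Z_{\min}$ and has exactly as many components as $Z_{\min}$, whereas for $c > \max H$ it equals the connected space $Z$. This forces $Z_{\min}$ to be connected and, moreover, forbids any non-global local minimum (which would contribute an index-$0$ handle and strictly increase the component count). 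Applying the same argument to $-H$ establishes the corresponding statements for $Z_{\max}$ and for local maxima.

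For the connectedness of arbitrary level sets the strategy is the analogue for Morse--Bott surgery instead of handle attachment. For $c$ slightly above $\min H$ the regular level set $H^{-1}(c)$ is a sphere bundle of positive fibre dimension over the connected base $Z_{\min}$, hence connected. Crossing a non-extremal critical value replaces an $S^{k-1}$-bundle by an $S^{n-k-1}$-bundle over the critical submanifold, with both $k$ and $n-k$ at least $2$; such a surgery preserves connectedness. Any critical level set is then a limit of connected regular level sets and so is connected too. The main technical point, and the step I expect to require the most care, is making the Morse--Bott handle/surgery picture precise across critical submanifolds with possibly several components at the same level, but this is classical and relies only on the evenness of the indices together with the connectedness of the attaching sphere fibres, both guaranteed by the $S^1$-equivariant local normal form.
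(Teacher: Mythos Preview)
The paper does not prove this lemma itself; it is stated with a citation to McDuff--Salamon and used as a black box. Your Morse--Bott argument via the equivariant Darboux normal form and the resulting evenness of all indices and coindices is exactly the standard proof found in that reference (and going back to Atiyah and Frankel), so there is nothing substantive to contrast.

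Two minor points are worth tightening. First, your argument tacitly assumes $Z$ is compact (so that $\max H$ and $\min H$ exist and the handle decomposition terminates); this is harmless, since both the cited lemma and the paper's applications are in the compact setting, but it should be said. Second, the sentence ``any critical level set is then a limit of connected regular level sets and so is connected too'' is not a proof as written---limits of connected sets need not be connected in general---but becomes one once you observe that $H^{-1}(c)=\bigcap_{\epsilon>0}H^{-1}([c-\epsilon,c+\epsilon])$ is a nested intersection of \emph{compact} connected sets, each of which is connected because every point in it is joined by a gradient trajectory to one of the connected regular levels $H^{-1}(c\pm\epsilon)$. With these caveats your proof is correct.
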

 
\begin{theorem}[Hui Li, \cite{Li2006The-fundamental}]\label{huili}  Let $(Z, \omega)$ be a connected, compact, symplectic manifold with a Hamiltonian $S^1$-action. Then the following natural homomorphisms, induced by inclusions, are isomorphisms.
$$\pi_1(Z_{\min})\to \pi_1(Z),\quad \pi_1(Z_{\max})\to \pi_1(Z).$$
\end{theorem}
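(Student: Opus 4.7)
The plan is to treat the moment map $H \colon Z \to \mathbb{R}$ as a Morse--Bott function and deduce the statement from the special form of its critical data. The critical set of $H$ equals the fixed set $Z^{S^1}$, a disjoint union of closed symplectic submanifolds $F_\alpha$. At each $F_\alpha$ the normal bundle decomposes equivariantly into complex $S^1$-weight subbundles; the Morse--Bott index at $F_\alpha$ is twice the sum of the complex ranks of the negative-weight subbundles. Hence every index is \emph{even}. The minimum locus $Z_{\min}$ is the unique component of index $0$, and every other component has index $\geq 2$. This gives a handle decomposition of $Z$ built from a tubular neighbourhood of $Z_{\min}$ by iteratively attaching Morse--Bott handles $D(\nu_\alpha^{-})$, each of rank $\geq 2$, along their sphere bundles $S(\nu_\alpha^{-}) \to F_\alpha$.

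Surjectivity of $\pi_1(Z_{\min}) \to \pi_1(Z)$ then follows by iterated van Kampen: attaching a disk bundle of rank $k \geq 2$ to a space $X$ along a sphere bundle produces a space whose fundamental group is a quotient of $\pi_1(X)$, because the core disk caps off the fibre $S^{k-1}$ and, as $k \geq 2$, this sphere is connected so the attachment brings in no new generators. Since no $1$-handles ever appear, no new generators arise throughout the whole filtration, and the inclusion of $Z_{\min}$ into $Z$ is $\pi_1$-surjective.

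The subtle point is injectivity: a rank-$2$ handle attached over a critical submanifold $F_\alpha$ of index $2$ could in principle kill a loop already present in $Z_{\min}$, and I have to rule this out. The key observation is that the boundary circle of the attaching bundle $S(\nu_\alpha^{-}) \to F_\alpha$ is precisely an orbit of $S^1$ acting on the (unique) negative normal fibre. I would produce an explicit null-homotopy of such an orbit that stays in $Z_{\min}$ by flowing the orbit downward under a carefully chosen gradient-like vector field for $-H$: generic $S^1$-orbits sweep out $2$-disks under this flow, and when they reach $Z_{\min}$ they collapse to a point since $S^1$ acts trivially there. Hence the element of $\pi_1(Z_{\min})$ potentially killed by the rank-$2$ handle is already trivial in $Z_{\min}$. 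The corresponding statement for $Z_{\max}$ follows by replacing $H$ with $-H$.

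The main obstacle I anticipate is globalising the push-down homotopy: the gradient flow of $H$ does not, in general, retract $Z$ onto $Z_{\min}$, since trajectories may limit onto intermediate fixed components. Handling this requires either a perturbation of the flow transverse to the intermediate strata, or an inductive argument that propagates the null-homotopy past one critical level at a time using the Bialynicki--Birula-style cell structure coming from the $S^1$-action. Once that technical step is in place, the theorem follows.
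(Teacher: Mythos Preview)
The paper does not give its own proof of this theorem; it is quoted as a result of Hui~Li with a citation and then used as a black box in the proofs of Theorem~\ref{non-isolated_fixed_points} and in \S\ref{isolated_end_of_proof}. So there is no in-paper argument to compare against.

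Your outline is essentially the argument in Li's original paper, and it is sound. Two remarks that tighten it:
\begin{itemize}
\item For the injectivity step you should work with an $S^1$-invariant metric, so that the negative gradient flow of $H$ is $S^1$-equivariant. Then an $S^1$-orbit flows through $S^1$-orbits, and if the trajectory actually reaches $Z_{\min}$ the limiting orbit is a single point (since $S^1$ acts trivially there), giving the disk you want.
\item The obstacle you identify---trajectories getting caught on intermediate fixed components---is real, and the inductive fix you suggest is exactly what is needed. The clean formulation is to carry along the auxiliary hypothesis ``every $S^1$-orbit in the sublevel set $Z_{\leq c}$ is null-homotopic in $Z_{\leq c}$''. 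This holds at the bottom (a tubular neighbourhood of $Z_{\min}$), and is preserved when crossing a critical level: the attaching circle of each index-$2$ handle is an $S^1$-orbit in $Z_{\leq c}$, hence already trivial, so no relation is imposed on $\pi_1$; and any orbit in the new sublevel set can be pushed by the local handle retraction into $Z_{\leq c}$. You also need that $Z_{\min}$ is the \emph{only} index-$0$ component, which follows from connectedness of the minimum level (Lemma~\ref{minmax}).
\end{itemize}
With those two points made explicit your argument goes through, and it coincides with Li's.
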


\subsection{A first look at the $S^1$-action on $Z$}

We conclude this section by describing in a little more detail the action of $S^1$ on $Z$. Our ultimate goal here is to relate the action on $Z$ to that on $M$ as well as to determine the value of the Hamiltonian function at fixed points of $Z$ in terms of the relative weights downstairs. To do this, we first give a natural choice of normalisation for the Hamiltonian, exploiting the involution $\gamma \colon Z \to Z$ of the unit sphere bundle given by multiplication by $-1$ in the fibres. 

\begin{lemma}
The Hamiltonian function $H \colon Z \to Z$ generating the $S^1$-action on $(Z, \omega_A)$ can be chosen so that $H(\gamma z) =  - H(z)$ for all $z \in Z$. This uniquely determines $H$.
\end{lemma}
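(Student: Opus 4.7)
The plan is to exploit the fact that the fibrewise antipodal map $\gamma$ reverses the orientation of each fibre of $Z \to M$, and hence anti-commutes with $\omega_A$. Once this is established, the normalisation is essentially an averaging argument.

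First I would verify that $\gamma^*\omega_A = -\omega_A$. Recall from \S\ref{review_def_conns} that $\omega_A = \frac{1}{2\pi i} F_\nabla$, where $\nabla$ is the unitary connection on the vertical bundle $V\to Z$. The map $\gamma$ preserves the vertical distribution, the horizontal distribution of $A$, and sends fibre tangent spaces to fibre tangent spaces by an orientation-reversing isometry (the antipodal map on $S^2$). Hence $\gamma^* V$ is identified with $V$ equipped with the conjugate complex structure, and under this identification $\gamma^*\nabla$ corresponds to $\nabla$ itself, so $\gamma^* F_\nabla = -F_\nabla$, giving $\gamma^*\omega_A = -\omega_A$.

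Next, since the $S^1$-action on $E$ is linear on fibres, it commutes with the fibrewise $-1$, so $\gamma$ is $S^1$-equivariant and $\gamma_* v = v$, where $v$ generates the $S^1$-action on $Z$. Let $H$ be any Hamiltonian, so $dH = \iota_v \omega_A$. Then
\[
d(\gamma^* H) = \gamma^*(\iota_v \omega_A) = \iota_{\gamma_*^{-1}v}(\gamma^*\omega_A) = -\iota_v \omega_A = -dH,
\]
so $\gamma^* H + H$ is locally constant. Since $Z$ is connected, it equals some constant $c\in\R$; replacing $H$ by $H-c/2$ produces a Hamiltonian satisfying $\gamma^* H = -H$.

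For uniqueness, suppose $H$ and $H'$ both generate the $S^1$-action and satisfy the antisymmetry property. Any two Hamiltonians for the same action differ by a constant, so $H' = H + c'$, and then
\[
-H' = \gamma^* H' = \gamma^* H + c' = -H + c',
\]
which forces $c' = -c'$, i.e., $c'=0$. Thus $H$ is unique.

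The only slightly subtle point is establishing $\gamma^*\omega_A = -\omega_A$ cleanly; in principle one could also verify it directly from the description of $\omega_A$ in terms of $A$ recalled in \S\ref{review_def_conns} (its restriction to each fibre is the $S^2$-area form, which is negated by the antipodal map, and its restriction to $H$ is built from $F_A$ paired with the footpoint $z\in E$, which is also negated by $z\mapsto -z$), and the rest of the argument is formal.
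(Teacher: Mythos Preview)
Your argument is correct and follows essentially the same route as the paper: establish that $\gamma$ is anti-symplectic and commutes with the $S^1$-action, deduce that $H + H\circ\gamma$ is constant, and subtract half of it. You supply a bit more detail than the paper does (the explicit pullback computation and the uniqueness argument), but the underlying idea is identical.
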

\begin{proof}
The involution $\gamma$ is anti-symplectic: $\gamma^*\omega_A = -\omega_A$. (This follows immediately from the description of $\omega_A$ given in \S\ref{review_def_conns}.) Meanwhile, the $S^1$-action is linear and so commutes with $\gamma$. It follows that if $v$ is the Hamiltonian vector field of $H$ then $H \circ \gamma$ is a Hamiltonian for $-v$. Hence $H+H \circ \gamma$ is a constant which can be taken to be zero by adding a constant to $H$. 
\end{proof}

\noindent From now on, we will exclusively use this normalisation for the Hamiltonian function $H$. 

We next turn to the weights of the $S^1$-action on $Z$. Let $z \in Z$ be a fixed point. Since the action preserves the symplectic form $\omega_A$, we can define \emph{genuine} weights for the action at $z$, as opposed to ones defined only up to a common sign as was the case for fixed points in the 4-manifold. Moreover, the action on $Z$ preserves the connection $A$ and hence the splitting $TZ= V \oplus H$. It makes sense then to talk of horizontal and vertical weights of the action at $z$ as the weights of the $S^1$-action on $H_z$ and $V_z$ respectively.

\begin{lemma}\label{vertical_weights}
Let $z \in Z$ be fixed by the $S^1$-action, with horizontal weights $a,b$. Then the vertical weight $w$ at $z$ is given by $w = a+b$.
\end{lemma}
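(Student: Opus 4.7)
The plan is to localize to the symplectic two-sphere $Z_p \subset Z$ obtained as the fibre of $Z \to M$ through $z$, where $p = \pi(z)$. Since $p$ is fixed by $S^1$, the circle preserves $Z_p$ and rotates this two-sphere with fixed points $z$ and $-z$ carrying vertical weights $w$ and $-w$ respectively. Positive-definiteness of $A$ (Corollary~\ref{positive_definite}) tells us that the normal bundle $N = H|_{Z_p}$ has normal degree $d = 2$, so $\int_{Z_p} c_1(N) = 2$.

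The key point is to identify the $S^1$-weights of $\det N$ at the two fixed points. At $z$ the tamed complex structure $J_z$ on $H_z \cong T_pM$ presents $N_z$ as a complex two-dimensional representation of weights $(a,b)$, so $\det N_z$ has weight $a+b$. At $-z$ the antipodal involution $\gamma\colon Z \to Z$ satisfies $\gamma^*\omega_A = -\omega_A$ (noted just before the present lemma), while $d\gamma$ is the identity on horizontal vectors since $\gamma$ is the identity on $M$; together these force the tamed complex structure at the antipodal point to satisfy $J_{-z} = -J_z$ under the natural identification $H_{-z} \cong H_z \cong T_pM$ coming from $\pi_*$. Hence the complex weights on $H_{-z}$ are $(-a,-b)$ and $\det N$ has weight $-(a+b)$ at $-z$.

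Applying Atiyah--Bott localization to compute $\int_{Z_p} c_1(\det N)$ from these weights on $Z_p \cong \C\P^1$ gives
\[
2 = \int_{Z_p} c_1(\det N) = \frac{(a+b) - (-(a+b))}{w} = \frac{2(a+b)}{w},
\]
so $w = a+b$. The only substantive step is the sign reversal $J_{-z} = -J_z$ of the tamed almost complex structure at the antipodal point; everything else is a standard equivariant degree calculation on a two-sphere.
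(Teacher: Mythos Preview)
Your argument is correct when $w \neq 0$, and it is a genuinely different route from the paper's. The paper invokes a specific $S^1$-invariant almost complex structure $J$ on $Z$ (constructed in \cite{Fine2009Symplectic-Cala}) for which the splitting $TZ = V \oplus H$ is complex-linear and there is an $S^1$-equivariant isomorphism $V \cong \Lambda^2_{\mathbb C} H$; the relation $w = a+b$ is then immediate. Your approach instead localises on the fibre $Z_p$: you feed in the topological datum $\deg(H|_{Z_p}) = 2$ (positive-definiteness) together with the antipodal anti-symplectomorphism $\gamma$ to pin down the weights of $\det N$ at both poles, and then read off $w$ from Atiyah--Bott. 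What the paper's approach buys is brevity and a pointwise identity valid without case analysis; what yours buys is independence from the particular almost complex structure of \cite{Fine2009Symplectic-Cala}, relying only on facts already recorded in the present paper. One small remark: your phrase ``$d\gamma$ is the identity on horizontal vectors'' tacitly uses that $d\gamma(H_z)=H_{-z}$, which holds because multiplication by $-1$ in $E$ commutes with $A$-parallel transport; and ``$J_{-z}=-J_z$'' should be read as ``$-J_z$ is tamed by $\omega_A|_{H_{-z}}$'', which is all you need for the weight computation.

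There is one genuine case your argument does not cover: $w=0$. Then the whole fibre $Z_p$ is fixed pointwise, localisation with isolated fixed points is unavailable, and your displayed formula divides by zero. This case does occur in the paper (over a fixed point of relative weights $(1,-1)$). It is, however, easily handled by the same ingredients you already use: when $Z_p$ is fixed, the horizontal weights are constant along $Z_p$, while your $\gamma$-argument shows they are $(a,b)$ at $z$ and $(-a,-b)$ at $-z$; hence $\{a,b\}=\{-a,-b\}$, forcing $a+b=0=w$. Adding this sentence makes the proof complete.
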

\begin{proof}
To compute the vertical weight it is convenient to use a particular choice of $S^1$-invariant almost complex structure $J$ on $Z$ defined in \cite[Definition~2.9]{Fine2009Symplectic-Cala}. It is proved there that with this choice of $J$, the splitting $TZ = V \oplus H$ is complex linear and, moreover, $V \cong \Lambda^2 H$. It is also clear from the definition that this isomorphism is $S^1$-equivariant. From here the claim that $w = a+b$ is immediate. (Note this uses that $A$ is a \emph{positive} definite connection, as is guaranteed by Corollary~\ref{positive_definite}; for negative definite connections one has $V^* \cong \Lambda^2 H$.) 
\end{proof}

With this Lemma in hand we can now explain how the action on $Z$ relates to that on $M$. Let $p \in M$ be fixed by the $S^1$-action, with relative weights $(a,b)$. $S^1$ acts isometrically on the sphere $Z_p$ over $p$ and there are two possibilities: either it fixes it completely, or it rotates it around two antipodal fixed points. Given a fixed point $z \in Z_p$, the action on $H_z$ is isomorphic to that on $T_pM$. It follows that the horizontal weights at $z$ must be equal, up to sign, to the relative weights of $p$. If $a+b \neq 0$, this means that the sphere $Z_p$ is not fixed, but rotated with speed $|a + b|$ about two fixed points $z, \gamma(z)$ with weights $a,b,a+b$ and $-a,-b,-a-b$ respectively. On the other hand, if $a+b=0$ (and so $a=1, b=-1$ or vice versa) the whole sphere $Z_p$ is fixed pointwise and every point in $Z_p$ has horizontal weights $1,-1$ and vertical weight $0$. 

We can already give one simple but very important consequence of this description.

\begin{lemma}\label{relative_weights_equal_sign}
Let $p \in M$ be a fixed point of the $S^1$-action with relative weights of the same sign. Then $H$ attains both a global maximum and minimum at fixed points in the fibre of $Z\to M$ over $p$. 
\end{lemma}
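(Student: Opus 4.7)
The plan is to reduce the statement to a direct application of Lemma~\ref{vertical_weights} combined with Lemma~\ref{minmax}. Fix a point $p \in M$ whose relative weights $(a,b)$ have the same sign. After possibly changing the overall sign (which only reverses the labelling of the two fixed points in $Z_p$), we may assume $a, b > 0$. In particular $a+b\neq 0$, so by the description given immediately before the lemma, $S^1$ does not fix $Z_p$ pointwise, but rather rotates it about two antipodal fixed points $z, \gamma(z) \in Z_p$. The horizontal weights of the $S^1$-action at $z$ are, up to sign, the relative weights $(a,b)$ of $p$; by Lemma~\ref{vertical_weights} the full triple of weights at $z$ is either $(a,b,a+b)$ or $(-a,-b,-a-b)$, and the triple at $\gamma(z)$ is then the negative of the triple at $z$.

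Next I invoke the standard local model for a Hamiltonian $S^1$-action near a fixed point: in equivariant Darboux coordinates centered at a fixed point with weights $w_1, w_2, w_3$, the Hamiltonian has the form $H_0 + \tfrac{1}{2}\sum_j w_j |z_j|^2$. Hence a fixed point whose weights are all of one sign is a non-degenerate local minimum (all $w_j>0$) or local maximum (all $w_j<0$) of $H$. In our case $\{a,b,a+b\}$ are all positive and $\{-a,-b,-a-b\}$ are all negative, so one of $z$, $\gamma(z)$ is a local minimum of $H$ and the other is a local maximum.

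Finally, Lemma~\ref{minmax} tells us that any local maximum or minimum of a Hamiltonian generating an $S^1$-action on a compact symplectic manifold is automatically a global one. Thus $H$ attains both its global maximum and its global minimum in $Z_p$, which is precisely the statement of the lemma.

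The only step that is not entirely formal is the local model for the Hamiltonian at a fixed point, but this is a completely standard consequence of the equivariant Darboux theorem and I do not expect it to cause any difficulty; everything else is a direct citation of earlier results in the paper.
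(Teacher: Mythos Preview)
Your proof is correct and follows essentially the same approach as the paper's: use Lemma~\ref{vertical_weights} to see that the weights at one of the two fixed points over $p$ are all of one sign, deduce this is a local extremum of $H$, and then invoke Lemma~\ref{minmax} to upgrade to a global extremum (with the antipodal point giving the other extremum, as the paper notes via $H\circ\gamma=-H$). One minor remark on conventions: with the paper's choice $\diff H=\omega(X_H,\cdot)$, the local model is $H=H_0-\tfrac12\sum_j w_j|z_j|^2$, so all-positive weights yield a local \emph{maximum} rather than a minimum; this swap does not affect your argument, since you only need that one of $z,\gamma(z)$ is a local max and the other a local min.
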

\begin{proof}
$H$ attains a local maximum at a fixed point $z$ precisely when all weights there are positive. This happens for some $z$ lying above $p$ by Lemma~\ref{vertical_weights} and the fact that the horizontal weights of fixed points above $p$ equal, up to sign, those of the relative weights downstairs. Since the level sets of $H$ are connected, any local maximum is in fact a global maximum. A similar argument, or the fact that $H \circ \gamma = - H$, proves the analogous statement about the global minimum.
\end{proof}

\begin{corollary}\label{connected_max}
The set of fixed points in $M$ with relative weights of the same sign is connected. 
\end{corollary}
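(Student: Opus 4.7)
The plan is to identify the set of fixed points with same-sign relative weights with $\pi(Z_{\max})$, where $\pi \colon Z \to M$ is the bundle projection, and then appeal to Lemma~\ref{minmax} to conclude connectivity. The forward inclusion is essentially the content of the preceding Lemma~\ref{relative_weights_equal_sign}: if $p \in M^{S^1}$ has relative weights of the same sign, then some fixed point $z$ in the fibre $Z_p$ lies in $Z_{\max}$, so $p \in \pi(Z_{\max})$.

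For the reverse inclusion, I would argue as follows. Suppose $z \in Z_{\max}$ lies above $p$. Since $z$ is a critical point of $H$ and a (global hence) local maximum, all weights of the $S^1$-action at $z$ must have the same sign. By the description of weights given just before Lemma~\ref{relative_weights_equal_sign}, the two horizontal weights at $z$ equal the relative weights $(a,b)$ of $p$ up to a common sign (one choice making them the genuine weights at $z$), and the vertical weight equals their sum. In order for all three weights at $z$ to be negative, the two horizontal weights must themselves be negative, forcing $a,b$ to have the same sign. (The case where $p$ has relative weights $(1,-1)$, in which $Z_p$ is fixed pointwise with weights $(1,-1,0)$ at every point, is excluded here, as none of these are local extrema of $H$.) Therefore $\pi(Z_{\max})$ coincides exactly with the set of fixed points whose relative weights have the same sign.

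Finally, Lemma~\ref{minmax} guarantees that $Z_{\max}$ is connected, and hence its continuous image $\pi(Z_{\max})$ is connected, yielding the corollary. The only subtlety, and the only point that needs care, is the bookkeeping of signs in the second paragraph; but since the vertical weight is literally the sum of the horizontal weights (Lemma~\ref{vertical_weights}), the three weights at $z$ can all be negative only when the two horizontal weights individually are, which is exactly the same-sign condition on the relative weights downstairs.
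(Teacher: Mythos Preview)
Your argument is correct and follows the same route as the paper: identify the set in question with $\pi(Z_{\max})$ and invoke the connectedness of $Z_{\max}$ from Lemma~\ref{minmax}. The paper's own proof is a two-line sketch that simply asserts both inclusions (``We have seen that the set of such fixed points is the projection of those points in $Z$ where $H$ attains its maximum''), whereas you spell out the reverse inclusion explicitly, which is a genuine improvement in clarity.

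One small point of convention: in the paper's sign convention (stated in the proof of Lemma~\ref{relative_weights_equal_sign}), $H$ attains a local maximum precisely when all weights at $z$ are \emph{positive}, not negative. Your argument goes through verbatim with this sign flipped, since the logic ``vertical weight $=$ sum of horizontal weights, so all three share a sign only if the two horizontal ones do'' is insensitive to which sign that is.
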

\begin{proof}
We have seen that the set of such fixed points is the projection of those points in $Z$ where $H$ attains its maximum. The result follows from the fact that the maximal level set of $H$ is connected.
\end{proof}

We close this section by giving the value of the normalised Hamiltonian at fixed points of $Z$. 

\begin{lemma}\label{H_via_relative_weights}
Let $z \in M$ be fixed by the $S^1$-action with horizontal weights $(a,b)$. Then $H(z) = a+b$.
\end{lemma}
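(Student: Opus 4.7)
The plan is to realise $H(z)$ as a genuine weight of the lifted $S^1$-action on the prequantum line bundle. Recall that $\omega_A = \tfrac{1}{2\pi i}F_\nabla$ is the curvature of a unitary connection $\nabla$ on the vertical tangent line bundle $V\to Z$, and that the $S^1$-action lifts to $V$ preserving $\nabla$ (Corollary~\ref{Z_action_Hamiltonian}). As in the proof of Lemma~\ref{hamaction}, I would decompose the generator $w$ of the lifted action on $V$ uniquely as $w = \hat v + 2\pi i f$, producing a canonical Hamiltonian $f$ with $df = \iota_v\omega_A$ (no additive constant left undetermined).

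At the fixed point $z$ the vector field $v$ vanishes, so $\hat v$ vanishes on $V_z$ and the formula reduces to $w|_{V_z} = 2\pi i f(z)\cdot\mathrm{id}_{V_z}$. On the other hand the $S^1$-action on the complex line $V_z$ is a one-dimensional unitary representation $e^{2\pi i m t}$ for some integer $m$, whose infinitesimal generator is $2\pi i m\cdot\mathrm{id}$. Comparing these two expressions gives $f(z) = m$, and by Lemma~\ref{vertical_weights} this vertical weight equals $a+b$.

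It then remains to identify $f$ with the normalised Hamiltonian $H$ fixed in \S\ref{first_look}. Since $\gamma^*\omega_A = -\omega_A$, the function $f\circ\gamma + f$ is locally constant, so it suffices to check that it vanishes at one fixed point. Now $\gamma$ restricts to the antipodal map on the sphere fibre through $z$, and because the antipodal map of $S^2$ is orientation-reversing, $d\gamma\colon V_z\to V_{\gamma(z)}$ is $\mathbb{C}$-antilinear (it takes the complex structure $J_z$ to $-J_{\gamma(z)}$). Conjugating the weight-$m$ representation on $V_z$ by this antilinear isomorphism yields the weight-$(-m)$ representation on $V_{\gamma(z)}$; hence $f(\gamma(z)) = -m = -f(z)$. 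This forces $f = H$ and therefore $H(z) = a+b$.

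The main subtlety will be the $\mathbb{C}$-antilinearity of $d\gamma$ on the vertical lines: this is precisely what makes the canonically-defined Hamiltonian $f$ antisymmetric under $\gamma$, and thus equal to the normalised Hamiltonian $H$. Once this observation is in hand, the lemma is essentially an immediate consequence of Lemma~\ref{vertical_weights} and the tautological identification of $f(z)$ with the weight of the lifted $S^1$-action on $V_z$.
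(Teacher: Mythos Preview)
Your approach is genuinely different from the paper's. The paper proves the lemma via Archimedes' theorem on the fibre sphere $Z_p$: that sphere has symplectic area $4\pi$ and is rotated $|a+b|$ times (Lemma~\ref{vertical_weights}), so $|H(z)-H(\gamma z)|=2|a+b|$; the normalisation $H\circ\gamma=-H$ then gives $|H(z)|=|a+b|$, and a brief monotonicity argument along the fibre fixes the sign. Your route instead identifies the canonical moment map value $f(z)$ with the weight of the lifted action on the prequantum fibre $V_z$, and then invokes Lemma~\ref{vertical_weights} to get $a+b$. This is more conceptual and, in principle, dispatches the sign in one stroke.

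One caution, though: you assert that the $f$ produced by the decomposition $w=\hat v+2\pi i f$ satisfies $df=\iota_v\omega_A$, but the paper writes $\omega_A=\tfrac{1}{2\pi i}F_\nabla$ whereas Lemma~\ref{hamaction} is stated with $\omega=\tfrac{i}{2\pi}F_B$. These differ by a sign, so rerunning that computation verbatim with $B=\nabla$ yields $df=-\iota_v\omega_A$, hence $H=-f$, which would flip your conclusion to $H(z)=-(a+b)$. The paper's own proof is unaffected precisely because it never routes through the formula of Lemma~\ref{hamaction}. Your antilinearity argument for $d\gamma$ is correct and shows $f\circ\gamma=-f$, but since $-f$ is equally antisymmetric this step alone cannot distinguish $H=f$ from $H=-f$. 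So the strategy is sound, but the overall sign still needs an independent check (for example, one explicit local computation, or verifying directly that with the normalisation making $\omega_A|_{\text{fibre}}$ the area form one really has $df=\iota_v\omega_A$), rather than a bare appeal to Lemma~\ref{hamaction}.
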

\begin{proof}
We use Archimedes theorem, that if $h$ is a Hamiltonian function generating an $S^1$-action on $S^2$, which has symplectic area $4\pi c$ and which is rotated $m$ times, then $h_{\max} - h_{\min} = 2cm$. By construction, $\omega_A$ gives all the fibres of $Z \to M$ the area $4\pi$. By Lemma~\ref{vertical_weights}, the sphere through $z, \gamma(z)$ is rotated $|a+b|$~times by the $S^1$-action. It follows that $|H(z) - H(\gamma(z))| = 2|a+b|$. Since $H(z) = - H(\gamma(z))$ this gives $|H(z)| = |a+b|$. Now if $H(z)>0$ then $H(\gamma(z))<0$, $H$ is decreasing down the vertical sphere through $z$ and so the vertical weight at $z$ is positive
\footnote{In our convention $\diff H=\omega(X_H,\cdot)$, where $X_H$ is the Hamiltonian vector field.}. By Lemma~\ref{vertical_weights}, this weight is $a+b$ and so $H(z) = a+b$. Similarly, if $H(z)<0$, the vertical weight $a+b$ is also negative and again $H(z) = a+b$.
\end{proof}

\section{The case of non-isolated fixed points}\label{non-isolated}

The goal of this section is to prove Theorem~\ref{main_theorem} in the special case where there are non-isolated fixed points.

\begin{theorem}\label{non-isolated_fixed_points}
Let $M$ be a closed 4-manifold admitting an $S^1$-invariant definite connection . If the $S^1$-action on $M$ has non-isolated fixed points then $M$ is diffeomorphic to $S^4$ or $\overline{\C\P}^2$.
\end{theorem}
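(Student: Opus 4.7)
The plan is to exploit the Hamiltonian $S^1$-action on $(Z,\omega_A)$ from Corollary~\ref{Z_action_Hamiltonian} together with the rigid constraint that $Z_{\max}$ and $Z_{\min}$ are connected (Lemma~\ref{minmax}): these will force $M^{S^1}$ to consist of a single fixed sphere $\Sigma \cong S^2$ plus at most one isolated fixed point of relative weight $(1,-1)$. The fixed-point formulas of Theorem~\ref{signature_constraint_theorem} then pin down $(\chi(M),\sigma(M))$, and Fintushel's Theorem~\ref{Fintushel_classification} finishes the identification.

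Fix a connected fixed surface $\Sigma$ supplied by the hypothesis. At any $p \in \Sigma$, coprimality (Lemma~\ref{coprime}) forces the relative weights to be $(0,1)$, so $Z_p$ is rotated with speed $1$ (Lemma~\ref{vertical_weights}) and contains two fixed points with horizontal weights $(0,\pm 1)$ and vertical weight $\pm 1$. As $p$ varies over $\Sigma$ these sweep out two $S^1$-invariant sections $\Sigma^\pm \subset Z$ on which, by Lemma~\ref{H_via_relative_weights}, $H \equiv \pm 1$. Since the two transverse weights at $\Sigma^+$ are both $+1$, the surface $\Sigma^+$ is a local maximum of $H$; Lemma~\ref{minmax} promotes this to a global maximum, and the connectedness of $Z_{\max}$ yields $Z_{\max} = \Sigma^+$ and $H_{\max} = 1$. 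Symmetrically $Z_{\min} = \Sigma^-$ and $H_{\min} = -1$.

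The bound $|H| \leq 1$ together with connectedness of the extremal sets now rules out every other fixed datum. Any further fixed point $q \in M^{S^1}$ with relative weights $(a,b)$ contributes fixed points of $Z^{S^1}$ at $H = \pm(a+b)$ disjoint from $\Sigma^\pm$; coprimality then forces either $(a,b) = (1,-1)$ (yielding the whole fibre $Z_q$ fixed at $H = 0$) or $a+b = \pm 1$, but in the latter case the new component of $Z_{\max}$ or $Z_{\min}$ disconnects the extremal set, a contradiction. A second fixed surface would likewise contribute a disjoint second component to $Z_{\max}$, so $\Sigma$ is unique. Writing $n$ for the number of isolated fixed points, Theorem~\ref{signature_constraint_theorem} reduces to $n + [\Sigma] \cdot [\Sigma] = 0$ and $-2n + [\Sigma] \cdot [\Sigma] = 3\sigma(M)$, giving $[\Sigma] \cdot [\Sigma] = -n$ and $\sigma(M) = -n$. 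Proposition~\ref{adjunction_inequality} gives $\chi(\Sigma) > n$, forcing $\Sigma \cong S^2$ and $n \in \{0,1\}$, hence $\chi(M) = n+2$. Since $Z \to M$ is an $S^2$-bundle, $\pi_1(Z) \cong \pi_1(M)$, and Hui Li's Theorem~\ref{huili} applied to $\Sigma^- \cong S^2 = Z_{\min}$ then gives $\pi_1(M) = 0$. Finally, Fintushel's Theorem~\ref{Fintushel_classification} identifies the simply connected 4-manifold of $(\chi,\sigma) = (2,0)$ or $(3,-1)$ as $S^4$ or $\overline{\C\P}^2$ respectively.

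The essential insight, and the place where one must be most careful, is the second paragraph: the existence of the fixed surface $\Sigma$ pins $H_{\max}$ and $H_{\min}$ to the very tight values $\pm 1$, and this is precisely what renders every non-$(1,-1)$ isolated fixed point, every arc or circle of isotropy spheres, and every additional fixed surface incompatible with the geometry of $(Z,\omega_A)$. After this observation is in hand the remainder of the proof is a routine combination of the connectedness of extremal sets with the classical fixed-point formulas.
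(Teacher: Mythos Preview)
Your proof is correct and follows essentially the same approach as the paper's own argument (Lemmas~\ref{Sigma_fixed_1} and~\ref{sphere_plus_point} together with the proof of Theorem~\ref{non-isolated_fixed_points}): both pin $H_{\max}=1$ via the fixed surface, use connectedness of $Z_{\max}$ to force all isolated fixed points to have relative weights $(1,-1)$ and $\Sigma$ to be unique, then combine the constraint equation, the adjunction inequality, Hui Li's theorem, and Fintushel's classification. Your version is slightly more explicit in invoking $Z_{\max}=\Sigma^+$ to exclude the case $|a+b|=1$ (the paper's Lemma~\ref{Sigma_fixed_1} is terse here), and you compute $\sigma(M)=-n$ uniformly rather than only for $n=1$, but these are cosmetic differences.
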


The proof proceeds by a series of Lemmas. 

\begin{lemma}\label{Sigma_fixed_1}
Let $M$ be a closed 4-manifold which admits an $S^1$-invariant definite connection. Suppose that the $S^1$-action on $M$ has non-isolated fixed points. Then the non-isolated fixed points form a connected surface and the only other fixed points are isolated with relative weights $(1,-1)$. 
\end{lemma}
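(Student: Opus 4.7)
The plan is to transfer the question into statements about the extrema of the normalized Hamiltonian $H$ on $(Z,\omega_A)$ and to read the statement off from the connectedness of $Z_{\max}$ and $Z_{\min}$ (Lemma \ref{minmax}). Throughout I would use that $A$ is positive definite (Corollary \ref{positive_definite}) and that the lifted $S^1$-action on $Z$ is Hamiltonian (Corollary \ref{Z_action_Hamiltonian}).

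Given a connected 2-dimensional component $\Sigma$ of $M^{S^1}$ with normal $S^1$-weight $n\neq 0$, I would first show that $\tilde\Sigma\subset Z$ is pointwise $S^1$-fixed, symplectic of dimension 2, and lies entirely in $Z_{\max}$ or $Z_{\min}$ according to the sign of $n$. At any $\tilde p\in\tilde\Sigma$ the horizontal weights on $T_{\tilde p}Z$ are $(0,n)$ and by Lemma \ref{vertical_weights} the vertical weight is $n$, so Lemma \ref{H_via_relative_weights} gives $H\equiv n$ on $\tilde\Sigma$, and the two normal weights of $\tilde\Sigma$ in $Z$ are both equal to $n$, making $\tilde\Sigma$ a local and (by Lemma \ref{minmax}) global extremum of $H$. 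Faithfulness then forces $|n|=1$: if $|n|\geq 2$, then $\Z_{|n|}\subset S^1$ would act trivially on $\Sigma$ and on $N_\Sigma$ (since $e^{2\pi i/|n|}$ acts on $N_\Sigma$ by $e^{\pm 2\pi i}=1$), hence trivially on a whole tubular neighbourhood of $\Sigma$ in $M$; but an element of $S^1$ whose fixed set on the connected manifold $M$ contains an open subset must be the identity, contradicting faithfulness. Normalizing so that $n=1$ (swapping $\tilde\Sigma$ with $\gamma\tilde\Sigma$ if necessary), one gets $H_{\max}=1=-H_{\min}$ and $Z_{\max}=\tilde\Sigma$. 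A second 2-dimensional fixed component would likewise lift to a disjoint 2-dimensional extremal piece in $Z_{\max}\cup Z_{\min}$, violating connectedness; so $\Sigma$ is unique.

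For any other fixed point $p$, necessarily isolated in $M^{S^1}$, I would classify its relative weights $(a,b)$ by their signs. If they are of the same sign, Lemma \ref{relative_weights_equal_sign} produces a fixed point of $Z$ above $p$ lying in $Z_{\max}\cup Z_{\min}=\tilde\Sigma\cup\gamma\tilde\Sigma$, contradicting $p\notin\Sigma$. So the relative weights have opposite signs; write them as $(a,-b)$ with $a,b>0$ coprime. At a fixed point $\tilde p\in Z_p$ the weights on $T_{\tilde p}Z$ are $(a,-b,a-b)$, which contain the negative entry $-b$, so $\tilde p$ is a saddle critical point of $H$ and therefore cannot attain $H_{\max}$ or $H_{\min}$. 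In particular $|H(\tilde p)|=|a-b|<H_{\max}=1$, forcing $a=b$ and, by coprimality, $(a,b)=(1,1)$, i.e.~relative weights $(1,-1)$. Finally, a 2-sphere in $M^\st$ would contribute isolated endpoints with at least one relative weight equal to the generic stabilizer order $m\geq 2$, which have just been excluded; so $M^\st$ contains no such 2-spheres, completing the description of the fixed set.

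I expect the main obstacle to be the one-line reduction $|n|=1$ via faithfulness. Without it, the Hamiltonian analysis for isolated fixed points with opposite-sign relative weights $(a,-b)$ would only yield $|a-b|<|n|$, which for $|n|\geq 2$ admits many solutions and would preclude the clean conclusion that the weights are $(1,-1)$.
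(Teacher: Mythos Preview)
Your argument is correct and follows essentially the same route as the paper: identify the lift $\tilde\Sigma$ as $Z_{\max}$ with $H_{\max}=1$ (the paper gets $|n|=1$ by citing Lemma~\ref{coprime}, which is exactly your faithfulness argument), deduce connectedness of $\Sigma$ from connectedness of $Z_{\max}$, and then bound $|a+b|$ (your $|a-b|$) at any isolated fixed point by $H_{\max}=1$ to force the weights to be $(1,-1)$. Your explicit identification $Z_{\max}=\tilde\Sigma$ makes the strict inequality $|H(\tilde p)|<1$ cleaner than the paper's compressed ``$-1\le H\le 1$ and $a,b\neq 0$''; the final sentence about spheres in $M^{\st}$ is an extra (correct) remark beyond what the lemma asserts.
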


\begin{proof}
Write $\Sigma$ for the locus of non-isolated fixed points. By Lemma~\ref{coprime}, each point of $\Sigma$ has relative weights $(1,0)$. Corollary~\ref{connected_max} gives that $\Sigma$ is connected. Moreover, by Lemma~\ref{H_via_relative_weights}, the value of $H$ at fixed points above $\Sigma$ satisfies $|H(z)|=1$. So $-1 \leq H \leq 1$. Now if $q \in M$ is any isolated fixed point with relative weights $(a,b)$ there are fixed points $z_\pm \in Z$ above $q$ with $H(z_+) = a+b$ and $H(z_-) = -(a+b)$. Since $-1 \leq H \leq 1$ and neither $a$ nor $b$ vanishes, it follows that the relative weights are $(1,-1)$ as claimed. 
\end{proof}

\begin{lemma}\label{sphere_plus_point}
Let $M$ be a closed 4-manifold which admits an $S^1$-invariant definite connection. Suppose that the $S^1$-action on $M$ has non-isolated fixed points. Then the surface of non-isolated fixed points is a 2-sphere and there is at most one isolated fixed point.
\end{lemma}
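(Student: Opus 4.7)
The plan is to combine the adjunction inequality (Proposition~\ref{adjunction_inequality}) applied to the fixed surface $\Sigma$ with the numerical constraint (\ref{constraint}) from the $G$-signature theorem. By Lemma~\ref{Sigma_fixed_1} we already know that $\Sigma$ is the only connected component of non-isolated fixed points and that every isolated fixed point has relative weights $(1,-1)$. Let $n$ denote the number of isolated fixed points and set $s = [\Sigma]\cdot[\Sigma]$.

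First I would note that $\Sigma$ is orientable: $M$ carries the orientation induced by the definite connection, the $S^1$-action rotates the normal bundle of $\Sigma$ (giving it a canonical orientation), and the splitting $TM|_\Sigma = T\Sigma \oplus N\Sigma$ therefore induces an orientation on $T\Sigma$. Since $\Sigma$ is fixed pointwise by all of $S^1$, it is in particular fixed pointwise by every finite cyclic subgroup $\Z_m \subset S^1$, and those subgroups act on $E$ preserving the definite connection $A$. Proposition~\ref{adjunction_inequality} therefore yields
\[
\chi(\Sigma) + s > 0.
\]

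Next I would plug the data of Lemma~\ref{Sigma_fixed_1} into the constraint (\ref{constraint}). Each isolated fixed point has weights $(1,-1)$ and so contributes $-1/(a_jb_j) = 1$ to the left-hand side, while $\Sigma$ contributes $s$. The equation becomes
\[
n + s \;=\; 0, \qquad \text{i.e.,} \qquad s \;=\; -n.
\]
Substituting into the adjunction inequality gives $\chi(\Sigma) > n$. Since $\Sigma$ is a closed orientable surface, $\chi(\Sigma) \leq 2$; combined with $\chi(\Sigma) > n \geq 0$ this forces $\chi(\Sigma) = 2$ (so $\Sigma$ is a 2-sphere) and $n \leq 1$.

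There is no serious obstacle here: the only thing to double-check is that the hypotheses of Proposition~\ref{adjunction_inequality} really do apply to $\Sigma$ (orientability plus the $\Z_m$-invariance of $A$), which follows immediately from the orientation discussion above and from the fact that the whole $S^1$ preserves $A$. The work of Lemma~\ref{Sigma_fixed_1} and Corollary~\ref{positive_definite} have already done the heavy lifting; the final step is just a short arithmetic comparison of $\chi(\Sigma)$ against $-s$.
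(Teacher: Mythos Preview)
Your argument is correct and follows essentially the same route as the paper: both combine Lemma~\ref{Sigma_fixed_1}, the constraint~(\ref{constraint}) (yielding $[\Sigma]\cdot[\Sigma]=-n$), and the adjunction inequality of Proposition~\ref{adjunction_inequality} to force $g=0$ and $n\leq 1$. Your explicit check of orientability of $\Sigma$ is a detail the paper leaves implicit; the only small wrinkle is that ``$\chi(\Sigma)>n\geq 0$ and $\chi(\Sigma)\leq 2$'' needs the parity $\chi(\Sigma)=2-2g$ to conclude $\chi(\Sigma)=2$ when $n=0$, but that is immediate.
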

\begin{proof}
Write $n$ for the number of isolated fixed points. By Lemma~\ref{Sigma_fixed_1} any isolated fixed point has relative weights $(1,-1)$. As above, write $\Sigma$ for the locus of non-isolated fixed points, which we also know is connected. Recall equation (\ref{constraint}) which gives a constraint on the relative weights of isolated fixed points. In the situation at hand it reads
\begin{equation}\label{self_intersection}
[\Sigma] \cdot [\Sigma] =-n \leq 0
\end{equation}
Meanwhile, $\Sigma$ is fixed by the $S^1$-action and so the adjunction inequality of Proposition~\ref{adjunction_inequality} reads
\[
2g -2 < [\Sigma] \cdot [\Sigma] 
\]
where $g$ is the genus of $\Sigma$. (We use here also the fact that the definite connection is positive, Corollary~\ref{positive_definite}.) So the only possibility is that $g=0$ and $n < 2$ as claimed.
\end{proof}

\begin{proof}[Proof of Theorem~\ref{non-isolated_fixed_points}] We assume that the $S^1$-action has non-isolated fixed points and we want to show that $M$ is diffeomorphic to either $S^4$ or $\overline{\C\P}^2$. 

We first show that $\pi_1(M) = 1$. Since $Z \to M$ is an $S^2$-bundle, $\pi_1(M) \cong \pi_1(Z)$. By Lemma~\ref{sphere_plus_point}, the non-isolated fixed points form a sphere $\Sigma \subset M$. Moreover, we have seen that $Z_{\max}$, the locus where $H$ attains its maximal value, projects diffeomorphically onto $\Sigma$. So $\pi_1(Z_{\max}) = 1$. But by Theorem~\ref{huili}, $\pi_1(Z) \cong \pi_1(Z_{\max})$ and hence $M$ is simply connected. 

We now compute the Euler characteristic. By Lemma~\ref{sphere_plus_point}, the zero locus of the vector field generating the $S^1$-action is a single 2-sphere together with at most one other isolated zero. The 2-sphere contributes 2 to $\chi(M)$. If there is no other fixed point we see that $\chi(M)=2$ and, looking at Fintushel's classification Theorem~\ref{Fintushel_classification}, we conclude that $M$ is diffeomorphic to $S^4$. 

Meanwhile, if there is another isolated fixed point, it contributes 1 to the Euler characteristic giving $\chi(M)=3$. Next we compute the signature, via equation (\ref{signature}). Equation (\ref{self_intersection}) says that in this case the 2-sphere of non-isolated fixed points has self-intersection $-1$ whilst the isolated fixed point has relative weights $(1,-1)$. Equation (\ref{signature}) then gives $\sigma(M) = -1$. Again looking at Fintushel's classification we conclude that $M$ is diffeomorphic to $\overline{\C\P}^2$.
\end{proof}

\section{All fixed points are isolated: the admissible patterns}\label{isolated_admissible_patterns}

We now turn to the case when all fixed points in the 4-manifold $M$ are isolated. This involves more work than in the previous section. The first step in the argument is to identify all the patterns which can occur. (Recall that a \emph{pattern} is a connected component of $M^\st$, the set of points with non-trivial stabiliser, together with the relative weights of the fixed points.)

\begin{theorem}\label{admiss} Let $M$ be a closed $4$-manifold
admitting an $S^1$-invariant definite connection. The following list contains all the possible patterns on~$M$:
\begin{enumerate}
	\item
	A fixed point with relative weights $(1,1)$ or $(1,-1)$.
	\item
	An arc of spheres of length one or two. 
	\item
	A circle of spheres of length two or three.
	\item
	An $S^2$ of non-isolated fixed points.
\end{enumerate}	
\end{theorem}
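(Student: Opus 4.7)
The plan rests on three ingredients. First, uniqueness of the same-sign fixed point: in the isolated setting the finite subset of $M^{S^1}$ consisting of same-sign fixed points is connected by Corollary~\ref{connected_max}, hence contains at most one element, and nontriviality of the action forces at least one, since any point where $H$ attains its maximum on $Z$ has all weights positive and therefore projects to a same-sign fixed point. Denote this unique point by $p_*$, with positive weights $(a_*,b_*)$; then $M_H=a_*+b_*$, and since $Z_{\max}=\{z_*^+\}$ is a single point, for every $S^1$-fixed $z\in Z$ not projecting to $p_*$ one has the \emph{strict} inequality $|H(z)|<M_H$.

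Second, a parity argument handles circles. To each sphere $e_i\subset M^\st$ assign a sign $\tau_i\in\{\pm 1\}$ recording the direction of the $S^1$-rotation on $T_{p_{i-1}}e_i$. Since the two $S^1$-fixed points on a rotated $2$-sphere carry opposite weights, the horizontal relative weights at an interior vertex $p_i$ are $(-\tau_im_i,\tau_{i+1}m_{i+1})$, so $p_i$ is same-sign if and only if $\tau_i\neq\tau_{i+1}$. A cyclic sequence has an even number of sign changes, so combined with the global bound of at most one same-sign vertex, a circle has \emph{no} same-sign vertex. Consequently $p_*$ never lies on a circle, every circle-vertex is opposite-sign, and the same analysis shows an arc has at most one same-sign vertex.

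Third, I apply Archimedes' theorem to the lifts $\tilde e_i\subset Z$. By Proposition~\ref{area_is_adjunction} the lift $\tilde e_i$ is a symplectic $2$-sphere of area $2\pi(2+s_i)$ with $s_i:=[e_i]\cdot[e_i]\geq-1$ by Proposition~\ref{adjunction_inequality}, and the $S^1$-action rotates $\tilde e_i$ with speed $m_i$. Archimedes and Lemma~\ref{H_via_relative_weights} therefore give
\[
m_i(2+s_i)=\bigl|H(z_{i-1}^{[i]})-H(z_i^{[i]})\bigr|,
\]
which, after substitution of the relative weights at the two endpoints, becomes an integer equation of the shape $m_i(2+s_i)=|2m_i\pm m_{i-1}\mp m_{i+1}|$, with signs determined by the relative order of adjacent stabilizers. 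Combined with $\gcd(m_i,m_{i+1})=1$, the adjunction bound $s_i\geq-1$, the strict inequality $|H|<M_H$, and in the tightest subcases the signature identity $\sum_j 1/(a_jb_j)=0$ from Theorem~\ref{signature_constraint_theorem}, a case analysis on the possible locations of $p_*$ (isolated $(1,1)$, endpoint of a short arc, or interior of a short arc) rules out arcs of length $\geq 3$ and circles of length $\geq 4$. Isolated points of $M^\st$ automatically satisfy type~(1) since their weights are $(\pm 1,\pm 1)$, and the non-isolated case~(4) has already been treated in Lemma~\ref{Sigma_fixed_1}--Lemma~\ref{sphere_plus_point}.

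The main obstacle is this last case analysis. For each candidate position of $p_*$ the Archimedes equation splits according to the relative sizes of the adjacent stabilizers $m_{i-1},m_i,m_{i+1}$, and the strict bound $|H|<M_H$ together with integrality and coprimality must be combined to rule out the configuration; short patterns survive, long ones do not. Once this case-by-case work is done, the four-part classification of patterns follows immediately.
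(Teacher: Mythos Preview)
Your parity argument in step~2 contains a genuine error that breaks the circle case. The signs $\tau_i$ are not intrinsically defined: to speak of ``the direction of rotation on $T_{p_{i-1}}e_i$'' you must orient that $2$-plane, and for your transition rule (the weight at $p_i$ toward $e_i$ equals $-\tau_i m_i$) to hold, the orientations at consecutive vertices must be compatible---equivalently, you must choose a lift $z_i\in Z$ over each $p_i$ so that $z_{i-1}$ and $z_i$ lie on the same lifted sphere. But Lemma~\ref{must_be_mobius} (which you have not proved, and which is the heart of the matter) shows that the lifted spheres over a circle form a \emph{connected} double cover: propagating the choice once around returns you to $\gamma(z_1)$, not $z_1$. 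In your notation this forces $\tau_{n+1}=-\tau_1$, so the number of sign changes in $\tau_1,\dots,\tau_{n+1}$ is \emph{odd}, not even. Together with Corollary~\ref{connected_max} this gives exactly one same-sign vertex on any circle, and that vertex is $p_*$. Your strict inequality $|H|<M_H$ therefore fails on every circle and your proposed case analysis for circles cannot even begin.

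The paper's treatment of circles runs through this M\"obius phenomenon rather than around it: on the connected $2n$-cycle in $Z$ the restriction $H|_C$ has a unique local maximum (the global maximum of $H$), and Lemma~\ref{change_of_sign} shows $H$ becomes negative within two steps of that maximum; the $\gamma$-symmetry then forces $2n\leq 6$. For arcs your outline is close in spirit to the paper's (which works inward from the endpoints via Lemmas~\ref{no_max_at_start}--\ref{no_arc_4}), but note that the paper never invokes the global identity~(\ref{constraint}) in proving Theorem~\ref{admiss}: that equation sums over \emph{all} fixed points of $M$, so it cannot be applied to a single pattern in isolation, and the classification of patterns here is purely local to each component of $M^\st$.
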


The previous section already showed that if there are non-isolated fixed points, they must lie on a single 2-sphere. The point of Theorem~\ref{admiss} is to rule out arcs of more than 2 spheres and circles of more than three spheres. 

The key idea is to analyse the behaviour of the Hamiltonian function $H$ on lifts of the arcs and spheres. Recall the discussion from \S\ref{adjunction_section} which explained how each embedded surface $\Sigma$ has two natural lifts to $Z$. When $\Sigma \subset M^\st$ is a 2-sphere whose generic points have stabiliser $\Z_m \subset S^1$, then the generic points of a lift $\Sigma'$ also have stabiliser $\Z_m$ This means the $S^1$-action rotates $\Sigma'$ $m$-times and we can thus compute the change in the Hamiltonian along $\Sigma'$ via its symplectic area.

\begin{lemma}\label{horizontal_change_in_H}
Write $z_0, z_1 \in \Sigma'$ for the fixed points of the $S^1$-action. 
$|H(z_0)-H(z_1)| = (2+s)m$ where $s = [\Sigma]\cdot[\Sigma]$ is the self-intersection of $\Sigma$ in $M$.
\end{lemma}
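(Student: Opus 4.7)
The plan is to imitate the proof of Lemma~\ref{H_via_relative_weights}, combining the symplectic area formula of Proposition~\ref{area_is_adjunction} with Archimedes' theorem on Hamiltonian $S^1$-actions on $S^2$.

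First I would observe that the lift $\Sigma'$ is a symplectic 2-sphere in $Z$. Since the lift $\tilde{f}\colon \Sigma \to Z$ is defined purely in terms of the equivariant data (the curvature $F_A$ together with the oriented cross-product on fibres of $E$), it is $S^1$-equivariant; in particular the $\Z_m$ which fixes $\Sigma$ pointwise also fixes $\Sigma'$ pointwise. As a component of the fixed locus of a finite group of symplectomorphisms of $(Z,\omega_A)$, $\Sigma'$ is therefore a symplectic submanifold, exactly as in the proof of Proposition~\ref{adjunction_inequality}.

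Next I would compute the symplectic area. By Corollary~\ref{positive_definite} the connection $A$ is positive definite, so Proposition~\ref{area_is_adjunction} gives
\[
\int_{\Sigma'} \omega_A \;=\; 2\pi\bigl(\chi(\Sigma) + [\Sigma]\cdot[\Sigma]\bigr) \;=\; 2\pi(2+s),
\]
up to the choice of orientation on $\Sigma'$; picking the symplectic orientation we get $2\pi(2+s)$, and Proposition~\ref{adjunction_inequality} incidentally guarantees this is positive. The same equivariance argument shows that the generic point of $\Sigma'$ has stabiliser exactly $\Z_m$, so the induced effective $S^1$-action on $\Sigma'$ rotates it exactly $m$ times.

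Finally I would apply Archimedes' theorem as recalled in the proof of Lemma~\ref{H_via_relative_weights}: on a 2-sphere of symplectic area $4\pi c$ rotated $m$ times, the difference of maximum and minimum of the Hamiltonian equals $2cm$. Setting $4\pi c = 2\pi(2+s)$, i.e.\ $c = (2+s)/2$, yields
\[
|H(z_0) - H(z_1)| \;=\; 2cm \;=\; (2+s)m,
\]
which is the claim. The only nontrivial point—and it is genuinely routine—is the equivariance of $\tilde{f}$ and the resulting identification of generic stabilisers on $\Sigma'$; no further estimate is required.
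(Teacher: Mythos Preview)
Your proposal is correct and follows essentially the same approach as the paper: compute the symplectic area of $\Sigma'$ via Proposition~\ref{area_is_adjunction} as $2\pi(2+s)$, note that $S^1$ rotates $\Sigma'$ exactly $m$ times (the paper records this in the paragraph preceding the lemma), and apply Archimedes' theorem. Your version simply spells out a bit more carefully why $\Sigma'$ is symplectic and why the generic stabiliser on $\Sigma'$ is $\Z_m$, but the argument is the same.
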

\begin{proof}
We use Archimedes theorem, that if $h$ is a Hamiltonian function generating an $S^1$-action on $S^2$, which has symplectic area $4\pi c$ and which is rotated $m$ times, then $h_{\max} - h_{\min} = 2cm$. The result follows from Proposition~\ref{area_is_adjunction}, which gives the area of $\Sigma'$ as $2\pi(2+s)$.
\end{proof}

\begin{figure}
\[
\xymatrix@R=2pt@C=4pt{
& (-a,-m) 
	&&\Sigma'&& (m,b) &\\
&\bullet \ar@{-}[rrrr] \ar@{-}[ddddd]\ar@{.}[l]
	&&&& \bullet \ar@{-}[ddddd]\ar@{.}[r] &\\
&z_0\ \ \ \ \ \ &&&&\ \ \ \ \ \ z_1&\\
&&&&&&\\
&&&&&&\\
&&&&&&\\
&\bullet \ar@{-}[rrrr]\ar@{.}[l]
	&&&& \bullet \ar@{.}[r] &\\
& (a,m)
	&&&& (-m,-b) &\\
&&&&&&\\
&p_0&&&&p_1&\\
&\bullet \ar@{-}[rrrr]\ar@{.}[l]
	&&&& \bullet \ar@{.}[r] &\\
& (a,m)
	&&\Sigma&& (m,b) &
}
\]
\caption{At the bottom, two fixed points $p_0, p_1 \in M$ are shown with their relative weights. They lie on a 2-sphere $\Sigma \subset M^\st$. Above, in $Z$, the four fixed points are shown with their horizontal weights. The points fixed points $z_0, z_1$ lie on the same lift $\Sigma'$ of $\Sigma$. It follows that the horizontal weight at $z_1$ in the direction of $\Sigma'$ is minus that at $z_0$.}
\label{curve_diagram}
\end{figure}
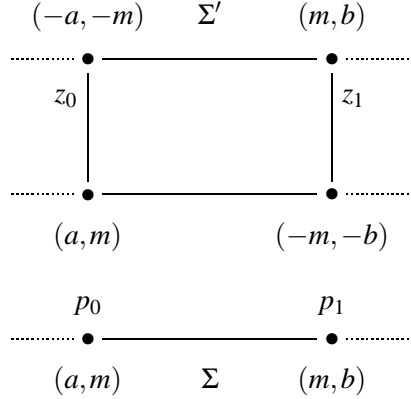

\subsection{Arcs of spheres in $M^\st$}

\begin{proposition}\label{arc_length_at_most_2}
Any arc of spheres in $M^\st$ contains at most two spheres.
\end{proposition}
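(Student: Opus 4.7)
The plan is to convert the problem into an integer recurrence and then rule out arcs of length $\geq 3$ by combining the recurrence with a sign constraint coming from the connectedness of the maximum set of $H$. Choose one of the two lifts to $Z$ of the arc $\Sigma_1, \ldots, \Sigma_k$, obtaining fixed points $z_0, z_1, \ldots, z_k \in Z$ on lifted spheres $\Sigma'_i$ (with $\Sigma'_i$ joining $z_{i-1}$ and $z_i$). For $1 \leq i \leq k$ let $\mu_i \in \{\pm m_i\}$ denote the horizontal weight of the $S^1$-action at $z_{i-1}$ in the $\Sigma'_i$-direction, and extend to $\mu_0, \ldots, \mu_{k+1}$ by setting $\mu_0 = -\xi_0$ and $\mu_{k+1} = \xi_k$, with $\xi_0, \xi_k \in \{\pm 1\}$ the perpendicular weights at the two extreme fixed points. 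The weight-flip rule of Figure~\ref{curve_diagram} gives $H(z_j) = -\mu_j + \mu_{j+1}$ at each interior $z_j$ (with analogous endpoint formulas). Combining this with Lemma~\ref{horizontal_change_in_H} and the fact that $\operatorname{sign}(H(z_i) - H(z_{i-1})) = -\operatorname{sign}(\mu_i)$ (positive horizontal weights correspond to local maxima of $H$) yields the linear recurrence
\[
\mu_{i-1} + \mu_{i+1} \;=\; -s_i\, \mu_i \qquad (1 \leq i \leq k),
\]
where $s_i := [\Sigma_i]\cdot[\Sigma_i]$. Proposition~\ref{adjunction_inequality} applied to $\Sigma_i$, which is fixed by $\Z_{m_i}$, then gives $s_i \geq -1$.

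A direct inspection of the weight data shows that the relative weights at $p_i$ have the \emph{same} sign if and only if $\mu_i$ and $\mu_{i+1}$ have \emph{opposite} signs. Since every fixed point of $M$ is isolated by assumption, Corollary~\ref{connected_max} tells us that the set of fixed points in $M$ with same-sign relative weights is a connected subset of a discrete space, hence at most a single point. Consequently the sequence $\mu_0, \ldots, \mu_{k+1}$ undergoes at most one sign change.

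Suppose for contradiction that $k \geq 3$. The crucial observation is that whenever three consecutive terms $\mu_{i-1}, \mu_i, \mu_{i+1}$ share a common sign, the recurrence together with $s_i \geq -1$ forces $s_i = -1$, and hence $\mu_{i+1} = \mu_i - \mu_{i-1}$. Iterating from $\mu_0 = \pm 1$ and $|\mu_1| = m_1 \geq 2$ produces $\mu_2 = \pm(m_1 - 1)$ and then $\mu_3 = \mp 1$, which has modulus $1$; since interior terms must satisfy $|\mu_j| \geq 2$, the unique permitted sign change must occur at some position $j \in \{0, 1, 2\}$. For each of these three patterns, the endpoint equation at the opposite end of the arc, together with $s_i \geq -1$ and the integrality of $s_i$, reduces to a Diophantine condition with no solution satisfying the interior bound $|\mu_j| \geq 2$; this contradicts $k \geq 3$. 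The main obstacle is organising this case analysis uniformly, but the boundary convention $\mu_0 = -\xi_0, \mu_{k+1} = \xi_k$ together with the sign symmetry $\mu_\bullet \mapsto -\mu_\bullet$ reduce it to a short explicit check.
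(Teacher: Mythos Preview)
Your recurrence $\mu_{i-1}+\mu_{i+1}=-s_i\mu_i$ is a clean repackaging of the paper's argument. The ingredients are identical: the adjunction bound $s_i\geq -1$ from Proposition~\ref{adjunction_inequality}, the weight flip along a lifted sphere, Lemma~\ref{H_via_relative_weights}, Lemma~\ref{horizontal_change_in_H}, and the uniqueness of the extremum of $H$. The paper tracks where the maximum of $H$ can sit along a lift (Lemmas~\ref{no_max_at_start} and~\ref{max_at_most_two_in}); you track sign changes in $(\mu_j)$. These are equivalent, since a sign change between $\mu_j$ and $\mu_{j+1}$ is exactly the statement that $z_j$ or $\gamma(z_j)$ is an extremum of $H$, and your observation that four consecutive same-sign terms are impossible is the combined content of Lemmas~\ref{no_max_at_start}--\ref{max_at_most_two_in}. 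Applied from \emph{both} ends of the arc together with ``at most one sign change'', this yields $j\in\{0,1,2\}\cap\{k-2,k-1,k\}$ and hence $k\leq 4$, recovering Lemma~\ref{no_arc_5plus}. (You only state the left-end bound explicitly; the right-end bound is needed and should be said.)

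The genuine gap is your last paragraph. Asserting that each remaining pattern ``reduces to a Diophantine condition with no solution'' and calling it ``a short explicit check'' is precisely what must be proved. After using arc-reversal and the right-end bound, the surviving cases are $k=3$ and $k=4$ with the sign change in the middle; these are exactly the situations of Lemmas~\ref{no_arc_3} and~\ref{no_arc_4}. For $k=3$ your recurrence yields $s_2=(m_1-m_3)/(m_3-1)$ and $s_3=(m_3-2)/m_1$ with $m_3>2$, $m_1>1$, and one must argue that no integers $s_2,s_3\geq -1$ solve both (the paper does this by showing $s_2=-1$ forces $m_1=1$ while $s_2\geq 0$ forces $0<s_3<1$). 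For $k=4$ one obtains $s_2(m_1-1)+s_3(m_4-1)=-2$ with $m_1,m_4\geq 3$, and must rule out $s_2=-1$ and $s_3=-1$ separately. These checks are short but not automatic, and your proposal stops exactly where they begin; until they are carried out, arcs of length $3$ and $4$ have not been excluded.
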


We will prove this result in a series of steps. Each arc must end at fixed points with relative weights $(\pm 1,m)$ where $m > 1$. We work inwards from the end of the arc, using Lemmas~\ref{H_via_relative_weights} and~\ref{horizontal_change_in_H} to compute how $H$ varies along the lift to $Z$ of an arc.
 
\begin{lemma}\label{no_max_at_start}
Let $p \in M$ be a fixed point with relative weights $(a,m)$ where $|a|=1$ and $m >1$ and let $\Sigma \subset M^\st$ be the 2-sphere containing $p$. Let $z_0, z_1$ be the fixed points on one of the lifts $\Sigma'$ of $\Sigma$, with $z_0$ projecting to $p$. If neither $z_0$ nor $z_1$ are the maximum or minimum of $H$, then $\Sigma$ has self-intersection $-1$ and, moreover, $|H(z_1)| = 1$.
\end{lemma}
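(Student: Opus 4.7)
The plan is to read off all the weights of the $S^1$-action at $z_0$ and $z_1$ forced by the hypothesis, use Lemma~\ref{H_via_relative_weights} to express $H(z_0)$ and $H(z_1)$ in terms of these weights, and then combine Lemma~\ref{horizontal_change_in_H} with the crucial observation that $H|_{\Sigma'}$ is an $S^1$-invariant Morse function on the 2-sphere $\Sigma'$ whose only critical points are the two poles $z_0$ and $z_1$. One of these is necessarily the global maximum of $H|_{\Sigma'}$ and the other its global minimum, and the sign of the tangential weight decides which is which.

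First I would rule out same-sign relative weights at $p$. If $\{a,m\}$ were of the same sign then Lemma~\ref{relative_weights_equal_sign} would place both global extrema of $H$ in the fibre $Z_p$, forcing $z_0$ to be one of them and contradicting the hypothesis. So the relative weights are of opposite sign and, after fixing the overall sign, I may take them to be $\{-1,m\}$ with $m\ge2$. The involution $\gamma$ preserves the hypothesis and the desired conclusion (it swaps $\Sigma'\leftrightarrow\Sigma''$ and sends $H$ to $-H$), so I may further assume $z_0$ has horizontal weights $(-1,m)$ with the $+m$-weight tangent to $\Sigma'$; then $H(z_0)=m-1$.

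Because the tangential weight to the rotated sphere $\Sigma'$ flips sign between its two poles, $z_1$ must have horizontal weights $(c,-m)$ for some nonzero integer $c$, and $H(z_1)=c-m$. If $c<0$ then all three weights at $z_1$ (namely $c$, $-m$ and $c-m$) are negative, making $z_1$ a global minimum of $H$; hence the hypothesis forces $c>0$. Next, at $z_0$ the tangential weight to $\Sigma'$ is $+m>0$, so $z_0$ is a local and therefore global maximum of $H|_{\Sigma'}$. This gives $H(z_0)\ge H(z_1)$, i.e.\ $c\le 2m-1$.

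Feeding this into Lemma~\ref{horizontal_change_in_H} yields
\[
(2+s)m \;=\; H(z_0)-H(z_1) \;=\; 2m-1-c,
\]
so $c=-sm-1$. Proposition~\ref{adjunction_inequality} applied to $\Sigma$ gives $s\ge-1$; integrality together with $c>0$ and $m\ge2$ then leaves only $s=-1$, $c=m-1$, whence $[\Sigma]\cdot[\Sigma]=-1$ and $H(z_1)=-1$, so $|H(z_1)|=1$, as required. I expect the one subtle point to be precisely the monotonicity step: without it, Lemma~\ref{horizontal_change_in_H} gives only $|2m-1-c|=(2+s)m$, which admits spurious solutions such as $(s,c)=(-1,3m-1)$ for which $|H(z_1)|=2m-1\ne1$. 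Identifying $z_0$ as the maximum of $H|_{\Sigma'}$ via the sign of the tangential weight is what rules these out.
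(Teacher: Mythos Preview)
Your argument is correct and follows essentially the same route as the paper's proof: rule out same-sign weights at $z_0$ via Lemma~\ref{relative_weights_equal_sign}, use the sign of the tangential weight along $\Sigma'$ to determine which of $H(z_0),H(z_1)$ is larger, force the transverse weight at $z_1$ to have the opposite sign (else $z_1$ would be an extremum), and then combine Lemmas~\ref{H_via_relative_weights} and~\ref{horizontal_change_in_H} with Proposition~\ref{adjunction_inequality} to pin down $s=-1$. The only cosmetic difference is that you work on the lift where the tangential weight at $z_0$ is $+m$ (so $H(z_0)=m-1$ and $H(z_1)=-1$), whereas the paper works on the antipodal lift (tangential weight $-m$ at $z_0$, giving $H(z_0)=1-m$ and $H(z_1)=1$); your appeal to the $\gamma$-symmetry makes this interchangeable.
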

\begin{proof}
Since $z_0$ is neither a maximum nor minimum, Lemma~\ref{relative_weights_equal_sign} tells that the relative weights of $p$ have different signs. So the horizontal weights of the action at $z_0$ are either $(1,-m)$ or $(-1, m)$. We will consider the case $(1,-m)$ and prove that $H(z_1)=1$. This is the situation of Figure~\ref{curve_diagram}, with $a=-1$. The case $(-1,m)$ leads similarly to $H(z_1)=-1$. 

By Lemma~\ref{H_via_relative_weights}, $H(z_0) = 1 - m$. We next compute $H(z_1)$. Since $z_0$ and $z_1$ are the fixed points of $\Sigma'$, one of the horizontal weights at $z_1$ is $m$, i.e., minus the corresponding weight at $z_0$. Since $z_1$ is not a maximum, the other horizontal weight $b$ must be negative. (If the second horizontal weight were also positive, Lemma~\ref{relative_weights_equal_sign} would imply $z_1$ is a local maximum.) By Lemma~\ref{H_via_relative_weights}, $H(z_1) = m+b$. 

Since the horizontal weight at $z_1$ in the direction of $z_0$ is positive, $H(z_1) > H(z_0)$. So Lemma~\ref{horizontal_change_in_H} gives
\[
(m+b) - (1-m) = (2+s)m
\] 
where $s$ is the self-intersection of $\Sigma$. This simplifies to $sm = b -1$. Recalling that $b<0$ and $m>0$ we see that $s<0$. Since, by Proposition~\ref{adjunction_inequality}, $s>-2$, the only possibility is that $s=-1$ and $m=1-b$ which in turn gives $H(z_1)=1$
\end{proof}

Let $p_0 \in M$ be a fixed point with relative weights $(\pm 1,m)$ where $m >1$ and let $\Sigma_1 \subset M^\st$ be the 2-sphere containing $p_0$. Assume that $\Sigma_1$ is part of an arc of more than one sphere. Then the other fixed point $p_1\in \Sigma_1$ has relative weights $(m,n)$ where $|n|>1$. Now $p_1$ lies on a second 2-sphere $\Sigma_2 \subset M^\st$ and we write $p_2 \in \Sigma_2$ for the other fixed point there. Next we choose lifts $\Sigma_j'\subset Z$ of the $\Sigma_j$ such that $\Sigma'_1 \cap \Sigma_2' \neq \varnothing$. We write $z_0,z_1,z_2$ for the three fixed points in $\Sigma'_1 \cup \Sigma_2'$ which project down to $p_0,p_1,p_2$ respectively. See Figure~\ref{arc_of_two_spheres}.

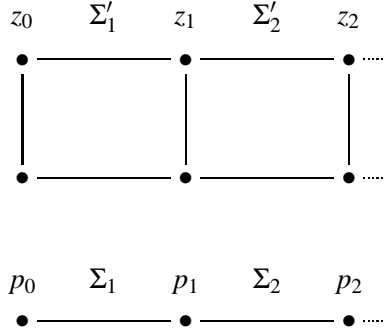
\begin{figure}
\[
\xymatrix@R=2pt@C=4pt{
z_0&&\Sigma_1'&& z_1&&\Sigma_2'&& z_2&\\
\bullet \ar@{-}[rrrr] \ar@{-}[ddddd]
	&&&& \bullet \ar@{-}[ddddd]\ar@{-}[rrrr] 
		&&&& \bullet \ar@{-}[ddddd]\ar@{.}[r] &\\
&&&& &&&& &\\
&&&& &&&& &\\
&&&& &&&& &\\
&&&& &&&& &\\
\bullet \ar@{-}[rrrr]
	&&&& \bullet \ar@{-}[rrrr] 
		&&&& \bullet \ar@{.}[r] &\\
&&&& &&&& &\\
&&&& &&&& &\\
&&&& &&&& &\\
p_0&&\Sigma_1&& p_1&&\Sigma_2&& p_2&\\
\bullet \ar@{-}[rrrr]
	&&&& \bullet \ar@{-}[rrrr] 
		&&&& \bullet \ar@{.}[r] &\\
}
\]
\caption{Configuration of fixed points in the situation of Lemma~\ref{max_at_most_two_in}.}
\label{arc_of_two_spheres}
\end{figure}

\begin{lemma}\label{max_at_most_two_in}
Suppose that neither $z_0$ nor $z_1$ is  a maximum or minimum of $H$. Then $z_2$ is either a maximum or a minimum of $H$.
\end{lemma}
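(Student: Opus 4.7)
The plan is to use Lemmas~\ref{no_max_at_start}, \ref{H_via_relative_weights}, and~\ref{horizontal_change_in_H} to pin down the horizontal weights at $z_2$, and then apply the adjunction inequality of Proposition~\ref{adjunction_inequality} to $\Sigma_2$ to force both of these weights to have the same sign. By Lemma~\ref{vertical_weights}, this will make all three weights at $z_2$ have the same sign, so $z_2$ becomes a local, hence (by Lemma~\ref{minmax}) global, extremum of $H$.

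The horizontal weights at $z_0$ are either $(1,-m)$ or $(-1,m)$; these two cases are exchanged by the antipodal involution $\gamma$ and handled symmetrically, so I treat the first. Lemma~\ref{no_max_at_start} applied to $p_0$ then gives $[\Sigma_1]\cdot[\Sigma_1]=-1$ and $H(z_1)=1$. Since $z_1$ is the other fixed point on $\Sigma_1'$, its horizontal weight along $\Sigma_1'$ must be $+m$ (the negative of the corresponding weight at $z_0$); Lemma~\ref{H_via_relative_weights} then forces the remaining horizontal weight at $z_1$, which lies along $\Sigma_2'$, to equal $1-m$. The relative weights $(m,1-m)$ of $p_1$ are coprime and both of modulus at least $2$, so $m\geq 3$.

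Now pass to $z_2$. The horizontal weight at $z_2$ along $\Sigma_2'$ equals $-(1-m)=m-1$; let $c$ denote the remaining horizontal weight. Since the weight at $z_1$ along $\Sigma_2'$ is $1-m<0$, the function $H$ increases as one moves from $z_1$ to $z_2$, so Lemma~\ref{horizontal_change_in_H} gives
\[
H(z_2) \;=\; 1 + (2+s_2)(m-1),
\]
where $s_2=[\Sigma_2]\cdot[\Sigma_2]$. Combining with $H(z_2)=(m-1)+c$ from Lemma~\ref{H_via_relative_weights} yields $c = 1 + (1+s_2)(m-1)$.

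The decisive step is Proposition~\ref{adjunction_inequality} applied to $\Sigma_2$, which is fixed pointwise by $\Z_{m-1}$ (here I use $m-1\geq 2$). It gives $2+s_2>0$, so $s_2\geq -1$ and therefore $c\geq 1$. Both horizontal weights at $z_2$ are now positive, and by Lemma~\ref{vertical_weights} the vertical weight $(m-1)+c$ is positive as well, so $z_2$ is a local, and hence global, maximum of $H$. The symmetric case $(-1,m)$ at $z_0$ produces $c\leq -1$ and makes $z_2$ a global minimum. The main obstacle is essentially book-keeping: tracking the signs, lifts and orientations carefully so that the adjunction inequality cleanly eliminates the bad values of $s_2$.
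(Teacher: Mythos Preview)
Your proof is correct and follows essentially the same route as the paper's own argument: reduce to horizontal weights $(1,-m)$ at $z_0$, use Lemma~\ref{no_max_at_start} to get the weights $(m,1-m)$ at $z_1$, compute the change in $H$ along $\Sigma_2'$ via Lemma~\ref{horizontal_change_in_H}, and then invoke the adjunction inequality $s_2\geq -1$ on $\Sigma_2$ to control the remaining weight at $z_2$. The only cosmetic difference is that the paper argues by contradiction (assuming the second horizontal weight at $z_2$ is $-n<0$ and deriving $t(m-1)+m=-n<0$, impossible since $t\geq -1$), whereas you compute the weight $c=1+(1+s_2)(m-1)$ directly and observe $c\geq 1$; your explicit check that $m\geq 3$ (needed so that $\Z_{m-1}$ is nontrivial and Proposition~\ref{adjunction_inequality} applies) is a point the paper leaves implicit.
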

\begin{proof}
The horizontal weights at $z_0$ are $(\pm 1, \pm m)$. Since $z_0$ is neither a maximum nor a minimum of $H$, Lemma~\ref{relative_weights_equal_sign} says the horizontal weights must have different signs. We assume they are $(1,-m)$ and will prove by contraction that $z_2$ is a maximum of $H$. (The case $(-1,m)$ leads in an identical way to $z_2$ being a minimum.) 

From Lemma~\ref{no_max_at_start}, we know that the horizontal weights at $z_1$ are $(m,1-m)$. This means that the horizontal weight at $z_2$ corresponding to $\Sigma_2'$ is $m-1$, which is positive. Since we are assuming $z_2$ is not a maximum, the other weight must be negative, $-n$ say. 

Now by Lemma~\ref{H_via_relative_weights}, $H(z_2) = m-1-n$ and by Lemma~\ref{horizontal_change_in_H} we have
\[
m-n-2 = (2+t)(m-1)
\]
where $t$ is the self-intersection of $\Sigma_2$. This rearranges to give that $t(m-1) + m = -n$ is negative. But this is impossible since $t \geq -1$ and $m>0$.
\end{proof}

We are now ready to limit the length of arcs of spheres in $M^\st$.

\begin{lemma}\label{no_arc_5plus}
There is no arc of five or more spheres in $M^\st$.
\end{lemma}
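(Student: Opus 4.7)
The plan is to walk inwards from each endpoint of the arc and, using Lemmas~\ref{no_max_at_start} and~\ref{max_at_most_two_in}, to show that both $Z_{\max}$ and $Z_{\min}$ must project into the set of the first three fixed points counted from that endpoint. For $k\geq5$ the two candidate sets are disjoint, and since the connected sets $Z_{\max},Z_{\min}$ project to single points of the (discrete) fixed locus $M^{S^1}$, this is a contradiction.

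Label the fixed points along the arc $p_0,p_1,\ldots,p_k$, with $p_0,p_k$ the endpoints, each carrying one relative weight $\pm1$. I claim that starting from $p_0$ both $Z_{\max}$ and $Z_{\min}$ project into $\{p_0,p_1,p_2\}$. If the relative weights at $p_0$ have the same sign, Lemma~\ref{relative_weights_equal_sign} places both extrema in the fiber above $p_0$. If they have opposite signs, say $(1,-m)$ with $m>1$, a direct weight computation shows the two points above $p_0$ have mixed-sign weights $(1,-m,1-m)$ and $(-1,m,m-1)$ and so are not extrema. Sub-divide further: either $z_1$ above $p_1$ is an extremum, in which case $\gamma(z_1)$ is the opposite extremum (using $H\circ\gamma=-H$ and $H_{\min}=-H_{\max}$), so both extrema project to $p_1$; or $z_1$ is not an extremum, the hypotheses of Lemma~\ref{max_at_most_two_in} are met, and $z_2$ above $p_2$ is an extremum, with $\gamma(z_2)$ the opposite extremum, so both extrema project to $p_2$.

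The symmetric analysis at $p_k$ shows both extrema project into $\{p_{k-2},p_{k-1},p_k\}$. Now, all fixed points of $M$ are assumed isolated in this section, so $M^{S^1}$ is discrete; and the max/min level sets of $H$ are connected by Lemma~\ref{minmax}. Hence $\pi(Z_{\max})\subset M^{S^1}$ is a connected subset of a discrete set, therefore a single point, and similarly for $Z_{\min}$. The two analyses must agree on this common point, forcing $\{p_0,p_1,p_2\}\cap\{p_{k-2},p_{k-1},p_k\}\neq\emptyset$. For $k\geq5$ we have $k-2\geq3>2$, so the intersection is empty, giving the desired contradiction.

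The main technical subtlety I foresee is the opposite-sign case at the endpoint: Lemma~\ref{max_at_most_two_in} requires as a hypothesis that $z_1$ is not an extremum of $H$, so to avoid circular reasoning one must treat the sub-case in which $z_1$ itself realizes an extremum separately. Fortunately that sub-case is even more favourable, pinning the extrema to the closer fibre over $p_1$ rather than $p_2$, so the overall claim ``both extrema lie over $\{p_0,p_1,p_2\}$'' survives both branches.
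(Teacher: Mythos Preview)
Your argument is correct and follows essentially the same route as the paper. Both proofs apply Lemma~\ref{max_at_most_two_in} from each end of the arc to force an extremum of $H$ within the first three fixed points counted from either endpoint, and then observe that for an arc of length $\geq 5$ these two windows are disjoint. The paper phrases the contradiction by counting extrema in the two connected components of the lift (obtaining four distinct local extrema, too many), whereas you use the antipodal symmetry $H\circ\gamma=-H$ to pin both $Z_{\max}$ and $Z_{\min}$ over a single $p_j$ and then compare projections in $M$. Your version makes the case split (whether $z_0$ or $z_1$ is already extremal) more explicit than the paper does, but the underlying mechanism is identical.
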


\begin{proof}
Suppose, for a contradiction, we have an arc of five or more spheres. Pick a connected component of the lift of the arc to $Z$. It contains at least six fixed points and so, by Lemma~\ref{max_at_most_two_in}, at least two of these points are local exteremums of $H$. Considering both connected components, we see we have found four distinct local extrema of $H$, contradicting the fact that the level sets of $H$ are connected.
\end{proof}

\begin{lemma}\label{no_arc_3}
There is no arc of three spheres in $M^\st$.
\end{lemma}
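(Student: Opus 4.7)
The plan is to argue by contradiction, assuming we have an arc of three spheres $\Sigma_1\cup\Sigma_2\cup\Sigma_3$ in $M^\st$ with consecutive fixed points $p_0,p_1,p_2,p_3$ (endpoints $p_0,p_3$, interior $p_1,p_2$). Choose a consistent lift to a chain $\Sigma_1'\cup\Sigma_2'\cup\Sigma_3'$ in $Z$, with fixed points $z_0,z_1,z_2,z_3$ projecting to the $p_i$ respectively.

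First I would control the number of extrema of $H$ along this chain. Since all fixed points in $M$ are isolated, the set of those whose relative weights have the same sign is discrete, and by Corollary~\ref{connected_max} it is connected, so it consists of at most one point. On the other hand $H$ attains its global maximum somewhere in $Z$, and that maximum projects to a same-sign fixed point, so the set contains exactly one point $p^*$. Consequently at most one of $z_0,\dots,z_3$ can be an extremum of $H$, namely the one lying over $p^*$ when $p^*$ belongs to the arc at all.

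Next I would eliminate all possibilities for the location of $p^*$ relative to the arc except one. If $p^*\notin\{p_0,\dots,p_3\}$, then no $z_i$ is an extremum and Lemma~\ref{max_at_most_two_in} applied to the sub-arc $\Sigma_1\cup\Sigma_2$ forces $z_2$ to be an extremum, a contradiction. If $p^*=p_0$ (and symmetrically if $p^*=p_3$, after reversing the arc), then $z_1,z_2,z_3$ are not extrema, and Lemma~\ref{max_at_most_two_in} applied to $\Sigma_3\cup\Sigma_2$ starting from the endpoint $p_3$ forces $z_1$ to be an extremum, again a contradiction.

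The main obstacle is the remaining case $p^*=p_1$ (with $p^*=p_2$ symmetric), where $z_1$ is itself the extremum so Lemma~\ref{max_at_most_two_in} cannot be applied from $p_0$, and applying it from $p_3$ only reproves what we already know. For this case I would compute $H$ explicitly along the chain. Assume $z_1$ is the global maximum of $H$, and let $(m,n)$ with $m,n>1$ be the relative weights of $p_1$; since all weights at $z_1$ are positive, the horizontal weights at $z_1$ equal $(m,n)$ and $H(z_1)=m+n$. Tracing along $\Sigma_1'$, the weight at $z_0$ along $\Sigma_1'$ is $-m$, and since $p_0$ is different-sign with one relative weight of modulus one, the sign constraint forces the other horizontal weight at $z_0$ to equal $+1$, giving $H(z_0)=1-m$. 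Lemma~\ref{horizontal_change_in_H} then yields $[\Sigma_1]\cdot[\Sigma_1]=(n-1)/m$, and an identical computation along $\Sigma_2'$ gives $[\Sigma_2]\cdot[\Sigma_2]=(m-1)/n$ and $H(z_2)=1-n$. But the horizontal weights at $z_2$ are $(-n,\beta)$ with $|\beta|>1$ since $p_2$ is interior to the arc, while $H(z_2)=\beta-n=1-n$ forces $\beta=1$, the desired contradiction.
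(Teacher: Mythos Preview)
Your case reduction via Corollary~\ref{connected_max} and Lemma~\ref{max_at_most_two_in} is fine and matches the paper's reasoning that the extremum must sit over an interior vertex of the arc. The problem is the final computation in the case $p^*=p_1$.

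The step ``an identical computation along $\Sigma_2'$ gives $[\Sigma_2]\cdot[\Sigma_2]=(m-1)/n$ and $H(z_2)=1-n$'' is not valid. Your computation along $\Sigma_1'$ used that $p_0$ is an \emph{endpoint} of the arc, so the second relative weight there has modulus one, forcing the horizontal weights at $z_0$ to be $(-m,+1)$. The point $p_2$ is \emph{interior}: its relative weights are $(n,k)$ with $k>1$ equal to the order of the stabiliser on $\Sigma_3$. So the horizontal weights at $z_2$ are $(-n,k)$ with $k>1$, and Lemma~\ref{H_via_relative_weights} gives $H(z_2)=k-n$, not $1-n$; correspondingly $s_2=(m-k)/n$, not $(m-1)/n$. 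In particular you cannot conclude $\beta=1$. In fact, applying Lemma~\ref{no_max_at_start} to $\Sigma_3$ from the endpoint $p_3$ (which \emph{is} legitimate, since neither $z_3$ nor $z_2$ is an extremum) yields $s_3=-1$ and $H(z_2)=1$, hence $k=n+1>1$, so the contradiction you claim simply does not occur.

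The argument can be repaired, but it needs the extra input from the $p_3$ end and then a further integrality/adjunction argument on $s_1$ and $s_2$. Concretely, with $H(z_2)=1$ one finds $s_1=(n-1)/m$ and $s_2=(m-n-1)/n$; since $n>1$ forces $s_1\ge 1$, hence $n\ge m+1$, one gets $s_2<0$, so $s_2=-1$ by Proposition~\ref{adjunction_inequality}, which gives $m=1$, a contradiction. This is essentially what the paper does (with a different labelling, placing the maximum at $z_2$ and invoking Lemma~\ref{no_max_at_start} from the $z_0$ end first): the crux is that one must feed in information from \emph{both} endpoints of the arc and then use the integrality and the bound $s\ge -1$ on self-intersections, not just symmetry about the maximum.
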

\begin{proof}
Assume for a contradiction such an arc exists. By Lemma~\ref{max_at_most_two_in}, one of the lifts of this arc to $Z$ contains the maximum of $H$ and, moreover, this maximum cannot be an endpoint. Consider the lift of the arc which contains the maximum and write the fixed points as $z_0, z_1, z_2, z_3$, in order, where $z_2$ is the maximum. Since $H\circ \gamma = -H$, we see that $\gamma(z_2)$ is the minimum of $H$

Write $(\pm 1,m)$ for the relative weights downstairs below $z_0$, where $m>1$. We will prove that the horizontal weights at $z_0$ are $(1,-m)$. Since $z_0$ is not the maximum (which is $z_2$) or the minimum (which is $\gamma(z_2)$) we know, by Lemma~\ref{relative_weights_equal_sign}, that the horizontal weights at $z_0$ have different signs and so are either $(1,-m)$ or $(-1,m)$. But if we take $(-1,m)$ then $H$ is decreasing as we move to $z_1$ then increasing as we move to $z_2$. This would imply that the horizontal weights at $z_1$ are negative, making it the minimum, which is a contradiction.

We now know, from the proof of Lemma \ref{no_max_at_start}, that the horizontal weights at $z_1$ are $(m, 1-m)$. Write $(m-1, n)$ for the horizontal weights at $z_2$, where $n>0$. (We know it is positive since $z_2$ is a maximum.) The horizontal weights at $z_3$ are then $(-n,1)$. (The second weight must have absolute value one since it lies above the end of an arc and it must be positive because $z_3$ is not a minimum). 

Applying Lemma \ref{H_via_relative_weights} we have:
\[
h(z_0) = 1-m,\quad
h(z_1) = 1, \quad
h(z_2) = m+n-1,\quad
h(z_3) = 1-n.
\]
Now write $s$ for the self-intersection of the sphere between $z_1$ and $z_2$ and $t$ for the self-intersection of the sphere between $z_2$ and $z_3$. By Lemma \ref{horizontal_change_in_H}, we have
\begin{eqnarray*}
m+n-2 &=& (2+s)(m-1)\\
m+2n - 2&=& (2+t)n
\end{eqnarray*}
These rearrange to
\[
s = \frac{n-m}{m-1},\quad\quad
t = \frac{m-2}{n}.
\]
From these equations we will find our contradiction. Note that $m-1,n$ are the orders of the stabilisers of generic points in the second and third sphere in the arc, so $n>1$ and $m>2$. It follows that $t >0$. Now $s>-2$. If $s=-1$, then $n=1$, a contradiction. Moreover, if $s \geq 0$ then $n \geq m$ from which we see that $t<1$, contradicting $m>2$. There are thus no possible solutions to our equations and inequalities, proving that no arc of length three is possible. 
\end{proof}

\begin{lemma}\label{no_arc_4}
There is no arc of 4 spheres.
\end{lemma}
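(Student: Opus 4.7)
The plan is to argue by contradiction, following the exact template of Lemma~\ref{no_arc_3}. Suppose $M^\st$ contains an arc of four spheres $\Sigma_1, \Sigma_2, \Sigma_3, \Sigma_4$ with generic stabilizers $\Z_{m_i}$, meeting in fixed points $p_0, p_1, \ldots, p_4$. Pick one connected lift of the arc to $Z$, with fixed points $z_0, \ldots, z_4$ above $p_0, \ldots, p_4$.

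First I would locate the global maximum of $H$. Applying Lemma~\ref{max_at_most_two_in} to the subarc $\Sigma_1 \cup \Sigma_2$ forces at least one of $z_0, z_1, z_2$ to be a global extremum of $H$; the symmetric application from the other end forces one of $z_2, z_3, z_4$ to be a global extremum as well. Since the max and min of $H$ are single points interchanged by $\gamma$, exactly one of them lies on the chosen lift. After replacing the lift by its $\gamma$-image if necessary, I may assume the \emph{maximum} lies on it, so the two set-conditions intersect in $\{z_2\}$, pinning $z_{\max} = z_2$.

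Next, using Lemma~\ref{no_max_at_start} at each endpoint, I would read off $[\Sigma_1]\cdot[\Sigma_1] = [\Sigma_4]\cdot[\Sigma_4] = -1$ and $|H(z_1)| = |H(z_3)| = 1$, together with the forced identifications $m_2 = m_1 - 1$ and $m_3 = m_4 - 1$; the interior stabilizer constraint $m_2, m_3 \geq 2$ then upgrades these to $m_1, m_4 \geq 3$. Maximality of $z_2$ selects the sign $H(z_1) = H(z_3) = 1$ at both ends (so that the horizontal weights at $z_2$ along $\Sigma_2'$ and $\Sigma_3'$ both come out positive), yielding horizontal weights $(m_1 - 1, m_4 - 1)$ at $z_2$ and hence $H(z_2) = m_1 + m_4 - 2$ by Lemma~\ref{H_via_relative_weights}.

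Finally I would apply Lemma~\ref{horizontal_change_in_H} to the symplectic 2-spheres $\Sigma_2'$ and $\Sigma_3'$ to solve for the self-intersections
\[
s = \frac{m_4 - m_1 - 1}{m_1 - 1}, \qquad t = \frac{m_1 - m_4 - 1}{m_4 - 1}.
\]
The hard part---and the step producing the contradiction---is the interplay between integrality of $s, t$ and the lower bounds on $m_1, m_4$. Rewriting integrality as the divisibilities $(m_1 - 1) \mid (m_4 - 2)$ and $(m_4 - 1) \mid (m_1 - 2)$, and assuming without loss of generality $m_1 \leq m_4$, the bound $m_1 \geq 3$ gives $1 \leq m_1 - 2 \leq m_4 - 2 < m_4 - 1$, so $m_4 - 1$ cannot divide $m_1 - 2$. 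This final contradiction shows that no arc of four spheres is possible.
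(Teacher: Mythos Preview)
Your proof is correct and follows the paper's approach almost verbatim: locate the maximum at the central lift $z_2$, apply Lemma~\ref{no_max_at_start} at both ends to get $H(z_1)=H(z_3)=1$ and $m_2=m_1-1$, $m_3=m_4-1$, then use Lemma~\ref{horizontal_change_in_H} to derive the same formulas $s=\frac{m_4-m_1-1}{m_1-1}$, $t=\frac{m_1-m_4-1}{m_4-1}$. The only difference is the final contradiction: the paper adds the two relations to get $s(m_1-1)+t(m_4-1)=-2$, invokes the adjunction inequality $s,t\geq -1$, and checks that $s=-1$ or $t=-1$ forces $m_4=2$ or $m_1=2$; you instead extract the divisibilities $(m_1-1)\mid(m_4-2)$ and $(m_4-1)\mid(m_1-2)$ and finish by a size comparison, which avoids appealing to Proposition~\ref{adjunction_inequality} at this last step.
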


\begin{proof}
Assume for a contradiction that such an arc exists. By Lemma~\ref{max_at_most_two_in}, we know that one of its lifts to $Z$ contains a maximum which, moreover, must lie at the centre of its chain. Write $z_0, z_1, z_2, z_3, z_4$ for the fixed points, in order, with $z_2$ the maximum. 

Write $m>1$ for the rotation speed of the sphere between $z_0, z_1$ and $n>1$ as the rotation speed of the sphere between $z_3$ and $z_4$. Then, from Lemma~\ref{no_max_at_start}, we have the following sequence of horizontal weights:
\[
(1,-m)\quad
(m,1-m)\quad
(m-1,n-1)\quad
(1-n, n)\quad
(-n,1)
\]
In particular, Lemma~\ref{H_via_relative_weights} gives that
\[
h(z_1) = 1 = h(z_3),\quad h(z_2) = m+n-2.
\]

Write $s$ and $t$ for the self-intersections of the projections of the spheres between $z_1,z_2$ and $z_2,z_3$ respectively. By Lemma~\ref{horizontal_change_in_H}, we have
\[
m+n-3 = (2+s)(m-1) = (2+t)(n-1).
\]
These rearrange to give 
\[
s = \frac{n-m-1}{m-1},\quad\quad
t = \frac{m-n-1}{n-1}.
\]
Note that $m,n \geq 3$, since $m-1$ and $n-1$ are rotation speeds of the second and third spheres in the chain. Adding the equations we see that
\[
s(m-1) + t(n-1) = -2
\]
So at least one of $s$ or $t$ is negative. Meanwhile, by Proposition~\ref{adjunction_inequality}, $s,t >-2$. But if $s=-1$, then $n=2$, whilst if $t=-1$ then $m=2$, either giving a contradiction.
\end{proof}

\begin{proof}[Proof of Proposition~\ref{arc_length_at_most_2}] Together, Lemmas~\ref{no_arc_5plus}, \ref{no_arc_3} and~\ref{no_arc_4} show that the longest arc of spheres possible in $M^\st$ contains two spheres. 
\end{proof}

\subsection{Circles of spheres in $M^\st$}

We next turn to circles of spheres in $M^\st$, the goal being to show that there are at most three spheres in such a circle. Each sphere in the circle has two natural lifts to $Z$ (as described in \S\ref{adjunction_section}). A~priori there are two possibilities: either the lifted spheres in $Z$ form a single connected component double-covering the circle of spheres in $M$, or they give a trivial double-cover, with two connected components. The first step is to rule out this second, disconnected, possibilty.

\begin{lemma}\label{must_be_mobius}
Given a circle of spheres in $M^\st$, the natural lifts to $Z$ form a connected set. Moreover, $H$ attains its maximum at one of the fixed points in the union of lifted spheres.
\end{lemma}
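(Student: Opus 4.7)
The plan is to argue by contradiction: assume the lifts decompose as two components $C^+\sqcup C^-$, necessarily exchanged by the fibrewise involution $\gamma$ (since $\gamma$ swaps the two orientations of each $S_i$ and hence the two lifts), and then produce two distinct isolated global maxima of $H$ on $Z$ to contradict the connectedness of $Z_{\max}$ from Lemma~\ref{minmax}.

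First I would locate the maximum of $H|_{C^+}$. Since $H$ restricted to any individual lifted sphere $\tilde S_i$ is a moment map for a rotation of $S^2$, its restriction maximum sits at one of the two $S^1$-fixed endpoints; hence the maximum $z^+$ of $H|_{C^+}$ is an $S^1$-fixed ``corner'' where two consecutive lifted spheres $\tilde S_i,\tilde S_{i+1}\subset C^+$ meet. The core local computation is then to show that $z^+$ is automatically a strict local (hence global) maximum of $H$ on all of $Z$. The horizontal space $H_{z^+}$ splits as $T_{z^+}\tilde S_i\oplus T_{z^+}\tilde S_{i+1}$, with each summand being the horizontal lift of the corresponding eigenplane in $T_{p^+}M$ (uniqueness of this $S^1$-eigenspace decomposition uses that the relative weights $(a,b)$ at $p^+$ satisfy $a\neq\pm b$, which follows from $|a|,|b|>1$ together with Lemma~\ref{coprime}). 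Because $z^+$ is a maximum of $H$ restricted to each lifted sphere, both horizontal weights at $z^+$ share a common sign in the convention of Lemma~\ref{H_via_relative_weights}, and by Lemma~\ref{vertical_weights} the vertical weight $a+b$ automatically has that same sign. Thus all three weights agree in sign, making $z^+$ a strict local extremum, which Lemma~\ref{minmax} promotes to a global maximum.

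The contradiction then arises from the minimum $w^+$ of $H|_{C^+}$: applying $\gamma$ and using $H\circ\gamma=-H$, the point $\gamma(w^+)\in C^-$ is the maximum of $H|_{C^-}$, and the identical weight calculation shows it is also a strict local maximum of $H$ on $Z$. Since $C^+\cap C^-=\varnothing$ we have $z^+\neq\gamma(w^+)$, yielding two distinct isolated points of $Z_{\max}$ and contradicting its connectedness. The ``moreover'' clause drops out for free: once the lifts are known to form a single connected set $C$, the same weight argument applied to the maximum of $H|_C$ exhibits that maximum as a strict local maximum of $H$ on $Z$, hence a global maximum.

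The step I expect to be the main obstacle is the identification of $T_{z^+}\tilde S_j$ with the horizontal lift of $T_{p^+}S_j$, since this is what licenses translating a ``maximum along a lifted sphere'' into a sign statement about a genuine horizontal weight at $z^+$. This rests on $S^1$-equivariance of the canonical lift combined with uniqueness of the eigenspace decomposition of $H_{z^+}$; uniqueness fails precisely when $a=\pm b$, and this is excluded by coprimality of the relative weights together with $|a|,|b|>1$.
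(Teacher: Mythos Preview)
Your argument is correct and uses the same core mechanism as the paper: the maximum of $H$ on any connected component of the lifts sits at a corner where both horizontal weights are positive, so by Lemma~\ref{vertical_weights} all three weights are positive and this point is a global maximum of $H$ on $Z$; uniqueness of the global maximum then forces a single component. The paper runs this directly---any component contains the unique global maximum, hence there is only one---while you package it as a contradiction and route through $\gamma$ to manufacture a second global maximum on $C^-$. That detour is harmless but unnecessary: you could apply your weight argument directly to the maximum of $H|_{C^-}$ without invoking $\gamma$ or the minimum $w^+$.

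One genuine contribution of your write-up is that you make explicit the identification of $T_{z^+}\tilde S_j$ with a horizontal eigenspace, which the paper's proof simply asserts. Your justification via coprimality and $|a|,|b|>1$ is on target, though the cleanest formulation is that the three weights $a$, $b$, $a+b$ on $T_{z^+}Z$ are pairwise distinct (this follows since $|a|,|b|>1$ and coprime rule out $a=b$, while $a,b\neq 0$ rule out $a=a+b$ and $b=a+b$), so the only $S^1$-invariant complex lines in $T_{z^+}Z$ are the three weight spaces; the tangent to the lift is non-vertical (it projects isomorphically to $T_{p^+}S_j$) and hence must be one of the two horizontal ones.
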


\begin{proof}
Let $C \subset Z$ denote a connected component of the lifts of the circle of spheres. Since $C$ is compact, the restriction $H|_C$ of the Hamiltonian attains a maximum on $C$. $H|_C$ is strictly monotonic along each sphere in $C$ and so attains its maximum at an $S^1$-fixed point for which both horizotnal weights are positive. By Lemma~\ref{vertical_weights}, this means the vertical weight is also positive and hence the maximum of $H|_C$ is in fact a local maximum of $H$ on the whole of $Z$. By connectedness of the level sets of $H$, this local maximum is actually the unique global maximum of $H$. By uniqueness, $C$ is the only connected component.
\end{proof}

Write $\Sigma_1, \ldots, \Sigma_n \subset M^\st$ for the spheres in the circle under consideration, written in order (so that $\Sigma_i \cap \Sigma_{i+1} \neq \varnothing$ for $i=1, \ldots n-1$). Let $p_i, p_{i+1}$ denote the fixed points in sphere $\Sigma_i$ (so that $p_{n+1} = p_1$). Choose $z_1 \in Z$ to be a fixed point lying above $p_1$. This singles out a lift $\Sigma'_1$ of $\Sigma_1$ which contains $z_1$. The other fixed point $z_2 \in \Sigma_1'$ lies above $p_2$. This again singles out a lift $\Sigma'_2$ of $\Sigma_2$ containing $z_2$, Write $z_3$ for the other fixed point of $\Sigma'_2$. Again, $z_3$ projects down to $p_3$. Note that if the circle contains only two spheres (i.e., $n=2$) then $z_1$ and $z_3$ project to the same point of $M$. The previous lemma ensures however that nonetheless $z_1$ and $z_3$ are distinct.

\begin{lemma}\label{change_of_sign}
Suppose that $H(z_1)> H(z_2) \geq 0$. Then $H(z_3) < 0$. 
\end{lemma}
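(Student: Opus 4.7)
The plan is to determine the horizontal weights at $z_2$ from the two hypotheses, read off $H(z_3)$ by applying Lemma~\ref{horizontal_change_in_H} to $\Sigma_2$, and then invoke Proposition~\ref{adjunction_inequality} to force the sign. Let $m_i \geq 2$ denote the rotation speed of $\Sigma_i$; these are at least $2$ because every sphere in a circle of spheres has both relative weights of absolute value greater than $1$.

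First, the condition $H(z_1)>H(z_2)$ forces $z_2$ to be the $H$-minimum along $\Sigma_1'$, so (by the sign analysis used in the proof of Lemma~\ref{no_max_at_start}, cf.\ Figure~\ref{curve_diagram}) the horizontal weight at $z_2$ in the $\Sigma_1'$-direction is $-m_1$. The remaining horizontal weight at $z_2$ equals $\pm m_2$, and so $H(z_2) = -m_1 \pm m_2$ by Lemma~\ref{H_via_relative_weights}. The hypothesis $H(z_2)\geq 0$ then forces the $+$ sign (and incidentally $m_2\geq m_1$). Consequently $z_2$ is the $H$-maximum along $\Sigma_2'$, and the weight at $z_3$ in the $\Sigma_2'$-direction is $-m_2$.

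Applying Lemma~\ref{horizontal_change_in_H} to $\Sigma_2$ now gives
\[
H(z_2)-H(z_3)=(2+s_2)\,m_2, \qquad s_2 := [\Sigma_2]\cdot[\Sigma_2],
\]
and combining with $H(z_2)=m_2-m_1$ one obtains $H(z_3)=-m_1-(1+s_2)m_2$. Since $A$ is positive definite (Corollary~\ref{positive_definite}), Proposition~\ref{adjunction_inequality} applied to $\Sigma_2$ yields $s_2 \geq -1$, hence $1+s_2 \geq 0$ and therefore $H(z_3)\leq -m_1 \leq -2 < 0$.

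The only mildly delicate step is the sign bookkeeping of horizontal weights when moving between the two endpoints of each lifted sphere; once this is in place, the rest is a one-line computation combining Lemma~\ref{horizontal_change_in_H} with the adjunction-type lower bound on self-intersections. Note in particular that the argument does not require $n\geq 3$: the bound $m_1\geq 2$ alone (rather than, say, $m_2\geq 2$) already drives $H(z_3)$ strictly below zero, which is what allows the lemma to be used even in the degenerate case $n=2$ where $z_1$ and $z_3$ sit over the same point of $M$.
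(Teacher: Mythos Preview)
Your proof is correct and follows essentially the same route as the paper: determine the horizontal weights at $z_2$ from the two hypotheses, apply Lemma~\ref{H_via_relative_weights} and Lemma~\ref{horizontal_change_in_H} to get $H(z_3) = -m_1 - (1+s_2)m_2$, and conclude via the bound $s_2 \geq -1$ from Proposition~\ref{adjunction_inequality}. The only cosmetic difference is that the paper deduces the positivity of the second horizontal weight at $z_2$ by noting that $H(z_2)\geq 0$ rules out $z_2$ being the global minimum (hence not all weights negative), whereas you argue directly that $-m_1-m_2<0$ would contradict $H(z_2)\geq 0$; both are valid and lead to the same place.
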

\begin{proof}
Write $m_i>1$ for the size of the stabiliser of the generic point of $\Sigma_i$. The horizontal weights at $z_2$ have absolute values equal to $m_1$, $m_2$. The first step is to determine the signs of these weights. Consider the sphere $\Sigma_1'$ running from $z_1$ to $z_2$. The fact that $H(z_1) > H(z_2)$ tells us that the corresponding horizontal weight at $z_2$ is  negative, so the first weight is $-m_1$. Meanwhile, the fact that $H(z_2)\geq 0$ means that $z_2$ is not the minimum of $H$, so the other horizontal weight at $z_2$ must be positive, equal to $m_2$.  

Now Lemma~\ref{H_via_relative_weights} gives that $H(z_2) = m_2-m_1$
Meanwhile, Lemma~\ref{horizontal_change_in_H} shows $H(z_2) - H(z_3) = (2+s)m_2$, where $s$ is the self-intersection of $\Sigma_2$. (The fact that the horizontal weight at $z_2$ corresponding to $\Sigma_2'$ is positive implies that $H(z_2)>H(z_3)$.) Together these equations give that
\[
H(z_3) = - m_1 -(s+1) m_2
\]
By Propositon~\ref{adjunction_inequality}, $s \geq -1$ from which it follows that $H(z_3)<0$. 
\end{proof}

\begin{proposition}\label{circle_length_at_most_3}
A circle of spheres in $M^\st$ contains either two or three spheres.
\end{proposition}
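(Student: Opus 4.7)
The strategy is to work on the connected component $C$ of the lift provided by Lemma~\ref{must_be_mobius}. Since $C$ is a connected double cover of the circle of $n$ spheres, $C$ is itself a cycle of $2n$ spheres with $2n$ fixed points $z_1, z_2, \ldots, z_{2n}$ (indices read mod $2n$), consecutive spheres $\Sigma'_i$ having rotation speed $m_i > 1$ (with $m_{i+n}=m_i$), and $\gamma(z_i) = z_{n+i}$. I normalise so that the global maximum of $H$ on $Z$, which lies on $C$, is attained at $z_1$; then $z_{n+1}$ is the global minimum.

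The first step is a monotonicity claim: along the forward path $z_1 \to z_2 \to \cdots \to z_{n+1}$, $H$ is strictly decreasing. I would prove this by induction on $i$. If $H(z_{i-1}) > H(z_i)$ with $i \leq n$, then the horizontal weight of $z_i$ along $\Sigma'_{i-1}$ is $-m_{i-1}$; its other horizontal weight cannot be $-m_i$, for otherwise all three weights at $z_i$ would be negative by Lemma~\ref{vertical_weights}, making $z_i$ a local and hence, by Lemma~\ref{minmax}, global minimum, forcing $i=n+1$. Therefore the second horizontal weight is $+m_i$, giving $H(z_i) > H(z_{i+1})$. The same analysis yields the explicit formula $H(z_i) = m_i - m_{i-1}$ for $2 \leq i \leq n$, which is nonzero because consecutive $m_j$ are coprime and both exceed~$1$.

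The second step is to bound the transition index $j$, defined by $H(z_j) > 0 > H(z_{j+1})$. If $j \geq 3$, then by monotonicity $H(z_{j-2}) > H(z_{j-1}) > 0$, and Lemma~\ref{change_of_sign} applied to the triple $(z_{j-2}, z_{j-1}, z_j)$ forces $H(z_j) < 0$, a contradiction; so $j \leq 2$. Running the same argument on the backward path $z_1 \to z_{2n} \to \cdots \to z_{n+1}$, whose transition index is $n-j$ by the symmetry $H\circ\gamma = -H$, yields $n - j \leq 2$. Together $j \leq 2$ and $n - j \leq 2$ force $n \leq 4$.

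The final step is to exclude $n = 4$ (where $j = 2$ is forced). Feeding the values $H(z_i) = m_i - m_{i-1}$ into Lemma~\ref{horizontal_change_in_H} applied along $\Sigma_2$ and $\Sigma_3$ gives $[\Sigma_2]\cdot[\Sigma_2] = -(m_1+m_3)/m_2$ and $[\Sigma_3]\cdot[\Sigma_3] = -(m_2+m_4)/m_3$. The adjunction inequality of Proposition~\ref{adjunction_inequality} applied to each of these $2$-spheres demands both self-intersections be at least $-1$, forcing simultaneously $m_2 \geq m_1 + m_3$ and $m_3 \geq m_2 + m_4$, which is impossible for positive $m_i$. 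I expect this last numerical clash to be the main obstacle: the entire argument hinges on playing the local horizontal-weight bookkeeping against the global adjunction bound, and it is exactly this interaction that caps the length of the circle at three.
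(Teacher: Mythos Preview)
Your argument is correct and follows the same strategy as the paper: use Lemma~\ref{change_of_sign} to show that $H$ becomes negative within two steps of the maximum, then invoke the antipodal symmetry to cap the total number of fixed points on $C$. Your monotonicity claim is equivalent to the paper's observation that any local maximum of $H|_C$ is a global maximum of $H$ on $Z$ and hence unique; the paper uses this directly to conclude that the positive values of $H|_C$ occur on at most three consecutive $z_j$, whence $2n\leq 6$.

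One bookkeeping slip: the transition index on the backward path is $n+1-j$, not $n-j$. Indeed, with $w_k$ denoting the $k$th point along $z_1\to z_{2n}\to\cdots\to z_{n+1}$, the relation $H\circ\gamma=-H$ gives $H(w_k)=-H(z_{n+2-k})$ for $k\geq 2$, so $H(w_k)>0$ exactly when $k\leq n+1-j$. The bound $j'\leq 2$ then reads $n+1-j\leq 2$, and together with $j\leq 2$ this already forces $n\leq 3$. Your separate exclusion of $n=4$ via the adjunction inequality is correct but unnecessary once this index is computed correctly.
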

\begin{proof}
As above, write $p_1, \ldots, p_n \in M$ for the fixed points in the circle of spheres, written in order around the circle. Assume moreover that the maximum of $H$ is attained at the point $z_1 \in Z$ which lies above $p_1$. This then determines a lift $\Sigma_1'$ of the sphere between $p_1$ and $p_2$, namely that containing $z_1$. The other fixed point on $\Sigma_1'$ gives a lift lift $z_2$ of $p_2$ and we continue in this way until, by Lemma~\ref{must_be_mobius}, we have an enumeration $z_1, z_2, \ldots, z_{2n}$ of all the fixed points in $Z$ lying above the circle where $z_{i}$ and $z_{i+n}$ both project to $p_i$.  

Now $H(z_1) > H(z_2)$, since $z_1$ is the maximum of $H$. So, by Lemma~\ref{change_of_sign}, either $H(z_2)<0$ or $H(z_3) <0$. Similarly, either $H(z_{2n})<0$ or $H(z_{2n-1})<0$. So $z_1$ lies in a sequence of at most three fixed points before the Hamiltonian becomes negative. These must be the only points at which $H$ is positive. To see this, write $C$ for the union of the lifts of the $\Sigma_i$. If there were another place where $H|_C$ were positive, there would be a second point at which $H|_C$ attained a local maximum. Just as in the proof of Lemma~\ref{must_be_mobius}, a local maximum of $H|_C$ is a global maximum of $H$ on $Z$ and so there can be only one such point.

We see then that $H$ is positive on at most three of the $z_j$ and hence, by symmetry, negative on at most three of them. It follows that there are at most six of the $z_j$ and hence $n=2$ or $3$ as claimed.
\end{proof}

Together, Propositions~\ref{arc_length_at_most_2} and~\ref{circle_length_at_most_3} prove Theorem~\ref{admiss}.

\section{All fixed points are isolated: multiple patterns}\label{isolated_end_of_proof}

Having listed all the possible patterns in $M$, we now examine when different patterns can occur simultaneously.

\subsection{Patterns bringing a maximum of $H$}
In the course of the proof of Theorem~\ref{admiss}, we saw that certain patterns bring a maximum of $H$. By uniqueness of the maximum of the Hamiltonian, we see that there is at exactly one such pattern present. We now make explicit those patterns above which $H$ does \emph{not} attain a maximum.

\begin{proposition}\label{nomax} Let $M$ be a closed 4-manifold admitting an $S^1$-invariant definite connection. If the Hamiltonian $H$ does not attain its maximum in $Z$ over a pattern in $M$, then the pattern is of one of the following two types:
\begin{enumerate}
\item  
A fixed point with relative weights  $(1,-1)$.
\item  
An arc composed of a single sphere with 
two points of relative weights $(1,-2)$. 
\end{enumerate}
It follows that $M^\st$ contains exactly one pattern from the list of Theorem~\ref{admiss} besides the two cases given here.
\end{proposition}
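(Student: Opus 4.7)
The plan is to go through each of the four pattern types of Theorem~\ref{admiss} and determine when it contains an $S^1$-fixed point of $Z$ at which $H$ attains a maximum. The underlying observation is that a fixed point $z\in Z$ is a local, and by Lemma~\ref{minmax} global, maximum of $H$ if and only if both of its horizontal weights are positive (the vertical weight then being their sum, also positive by Lemma~\ref{vertical_weights}). Moreover, since $H\circ\gamma=-H$ and $\gamma$ preserves the fibres of $Z\to M$, a given pattern of $M^\st$ contains a maximum of $H$ iff it contains a minimum of $H$, so it suffices to detect any local extremum of $H$.

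Three of the four pattern types are dispatched by citing earlier results. Every circle of spheres in $M^\st$ contains the global maximum of $H$ by Lemma~\ref{must_be_mobius}. For an arc of length two, Lemma~\ref{max_at_most_two_in} shows that in any lift with fixed points $z_0,z_1,z_2$, if neither of $z_0,z_1$ is an extremum then $z_2$ is one, so such an arc always contains an extremum. For an isolated fixed point with relative weights $(1,1)$, Lemma~\ref{relative_weights_equal_sign} produces a max in the fibre directly. For an isolated fixed point with relative weights $(1,-1)$, one has $a+b=0$ and the entire fibre $Z_p$ is $S^1$-fixed, hence is a connected set of critical points of $H$; therefore $H$ is constant on $Z_p$, and $H\circ\gamma=-H$ forces this constant to be zero, so $Z_p$ is neither max nor min (the action is non-trivial, so $H\not\equiv 0$).

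The delicate case is arcs of length one; this is where I expect the main obstacle, and where the specific weights $(1,-2)$ arise. Let $\Sigma$ be the sphere, with endpoints $p_0,p_1$ and generic stabiliser $\Z_m$, $m>1$. For no fixed point in either lift of $\Sigma$ to be an extremum, the weight $\pm 1$ at each endpoint must have sign opposite to $m$; hence both endpoints have relative weights $(1,-m)$. Choose the lift $\Sigma'$ in which $z_0$ (above $p_0$) has horizontal weights $(1,-m)$. Lemma~\ref{no_max_at_start} then gives $[\Sigma]\cdot[\Sigma]=-1$ and $|H(z_1)|=1$. Since the horizontal weight of $z_1$ along $\Sigma'$ equals $+m$ (the opposite of that at $z_0$) and the other is $\pm 1$, Lemma~\ref{H_via_relative_weights} gives $H(z_1)=m\pm 1$. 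Combining this with $|H(z_1)|=1$ and $m>1$ forces $m=2$, so both endpoints indeed have relative weights $(1,-2)$.

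The concluding ``exactly one'' statement of the proposition is then immediate from Lemma~\ref{minmax}: the global maximum of $H$ is attained on a single connected component of the fixed set of $Z$, which projects into exactly one pattern of $M^\st$, and the case analysis above shows every remaining pattern must be of one of the two listed exceptional types.
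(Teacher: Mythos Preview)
Your argument is correct and follows essentially the same route as the paper: a case-by-case run through the pattern types of Theorem~\ref{admiss}, invoking Lemma~\ref{must_be_mobius} for circles, Lemma~\ref{max_at_most_two_in} for length-two arcs, Lemma~\ref{relative_weights_equal_sign} for the $(1,1)$ point, and then, for a single-sphere arc, combining Lemma~\ref{no_max_at_start} (giving $|H(z_1)|=1$) with Lemma~\ref{H_via_relative_weights} (giving $H(z_1)=m-1$) to force $m=2$. The paper's computation is phrased as $|H(z_1)|=|m-1|$ rather than your $m\pm1$, but the content is identical.

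There is one small omission. You announce a sweep through ``each of the four pattern types of Theorem~\ref{admiss}'' but never address type~(4), an $S^2$ of non-isolated fixed points. The paper handles this in a single clause, recalling from \S\ref{non-isolated} that $H$ always attains its maximum over such a surface (the lift has nonzero normal weights $(1,1)$, so it is $Z_{\max}$). You should add one sentence to close this gap; otherwise the proof is complete and matches the paper's.
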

\begin{proof}
We have already seen that $H$ must attain its maximum above either a surface of non-isolated fixed points or a circle of spheres. It remains to treat fixed points which are isolated in $M^\st$ as well as arcs. 

We begin with fixed points which are isolated in $M^\st$. Such a point $p$ has relative weights $(a,b)$ with $|a|=1=|b|$. In the case the relative weights have the same sign, we see that one of the two fixed $z$ above $p$ has horizontal weights $(1,1)$. By Lemma~\ref{vertical_weights}, the vertical weight at this point is $2$ and, since all the weights are positive, $z$ is a maximum of $H$. 

On the other hand, when the relative weights of $p$ are $(1,-1)$, we see that any fixed point above $p$ has horizontal weights $(1,-1)$ and vertical weight $0$; the sphere above $p$ is fixed pointwise by the $S^1$-action and consists of saddle points of $H$.

We now turn to arcs. Proposition~\ref{arc_length_at_most_2} tells us that there are at most three fixed points in any arc of spheres in $M^\st$. Moreover, by Lemma~\ref{max_at_most_two_in}, three fixed points leads automatically to a maximum of $H$. So, for $H$ to not attain its maximum, the arc must in fact be a single sphere, connecting two fixed points $p_0, p_1$ with relative weights $(a_j,m)$ where $m>1$ and $|a_j|=1$. Now a fixed points $z_1$ above $p_1$ has horizontal weights equal to either$(1,-m)$ or $(-1,m)$, since if the horizontal weights had the same sign, $H$ would attain a maximum at $z_1$ or $\gamma(z_1)$. So, by Lemma~\ref{H_via_relative_weights}, $|H(z_1)| = |m-1|$. Meanwhile, by Lemma~\ref{no_max_at_start}, we see that $|H(z_1)|=1$. It follows that $m=2$ and the relative weights of $p_1$ are $(1,-2)$. The symmetric argument gives the same result for $p_0$. 
\end{proof}

\subsection{The constraint equation}

Next we bring equation~(\ref{constraint}) into play, which gives a constraint which the relative weights of the fixed points must satisfy. 

\begin{definition}
Given a pattern $P \subset M^\st$ containing only isolated fixed points, write
\[
q(P) = \sum \frac{1}{a_jb_j}
\]
where the sum is over all fixed points in $P$ and $(a_j,b_j)$ are the relative weights of these fixed poitns.
\end{definition}

Write $P_1, \ldots, P_r$ for the patterns in $M$. Then, at least when all fixed points are isolated, equation~(\ref{constraint}) says that $q(P_1) + \cdots + q(P_r) = 0$. The following two results show that the values of $q(P)$ are themselves heavily constrained. 

\begin{lemma}\label{contribution_no_max}
Let $P \subset M^\st$ be a pattern above which $H$ does \emph{not} attain its maximum. Then $q(P) = 1$. 
\end{lemma}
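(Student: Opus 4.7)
The plan is a short case analysis, with essentially all the work having already been done upstream.

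Proposition~\ref{nomax} tells us that if $P$ is a pattern above which $H$ does not attain its maximum, then $P$ is one of only two very restricted types:
\begin{enumerate}
\item $P$ is a single isolated fixed point with relative weights $(1,-1)$, or
\item $P$ is an arc consisting of a single sphere whose two endpoint fixed points each have relative weights $(1,-2)$.
\end{enumerate}
So my strategy is simply to compute $q(P)=\sum 1/(a_jb_j)$ in each of these two cases directly from the definition. In case~(i) the sum collapses to the single term coming from the weights $(1,-1)$. In case~(ii) the sum has exactly two terms, one for each endpoint of the arc, each coming from the weights $(1,-2)$; the generic interior points of the sphere in the arc have finite stabiliser but are not fixed by $S^1$ and hence contribute nothing. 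In both cases the arithmetic produces the value $q(P)=1$ under the sign convention for relative weights used in the paper.

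There is no substantive obstacle at this step: once Theorem~\ref{admiss} has restricted the allowed patterns and Proposition~\ref{nomax} has further singled out which of them can fail to produce a maximum of~$H$, the lemma reduces to two trivial numerical checks. All the genuine difficulty lies in the earlier classification, which combined the Hamiltonian argument via Lemmas~\ref{H_via_relative_weights} and~\ref{horizontal_change_in_H} with the adjunction inequality (Proposition~\ref{adjunction_inequality}) to cut down the list of patterns to the extremely rigid shapes catalogued in Proposition~\ref{nomax}.
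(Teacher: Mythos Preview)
Your approach is exactly the paper's: invoke Proposition~\ref{nomax} to reduce to the two listed cases and then compute $q(P)$ directly, which is precisely what the paper means by ``a simple calculation, using the classification of such patterns in Proposition~\ref{nomax}.'' There is nothing to add; the substance of the argument lies entirely in the earlier classification, as you correctly note.
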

\begin{proof}
This is a simple calculation, using the classification of such patterns in Proposition~\ref{nomax}.
\end{proof}

\begin{lemma}\label{contribution_max}
Let $P \subset M^\st$ be a pattern above which $H$ \emph{does} attain its maximum. Then either $q(P) > -1$ or $q(P) =-1$ and $P$ is a single fixed point with relative weights $(1,1)$.
\end{lemma}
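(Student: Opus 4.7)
The plan is a case analysis of the possible types of pattern $P$ hosting the maximum of $H$. By Theorem~\ref{admiss} combined with Proposition~\ref{nomax}, such a $P$ must be one of: (i)~a single fixed point with relative weights $(1,1)$; (ii)~an arc of one sphere whose endpoints are not both of the exceptional $(1,-2)$ form; (iii)~an arc of two spheres; (iv)~a circle of two spheres; or (v)~a circle of three spheres. Case (i) is handled by direct computation and gives the equality $q(P)=-1$ of the lemma.

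For each of the remaining cases my plan is to first locate a point $z^*\in Z$ realising the maximum of $H$ and then to determine the horizontal weights at every fixed point of $P$. At $z^*$ both horizontal weights are positive and, up to swap, equal to the stabiliser orders of the two adjacent spheres in $P$ (with one weight equal to $1$ if $z^*$ lies above an arc endpoint). For case (iii), I would first show $z^*$ cannot lie above the interior vertex of the arc, using Lemma~\ref{horizontal_change_in_H} to derive the forced values $(m_2-1)/m_1$ and $(m_1-1)/m_2$ for the two sphere self-intersections; together with adjunction and $m_1,m_2\geq 2$ these force the contradictory pair of inequalities $m_2>m_1$ and $m_1>m_2$. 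Walking outward from $z^*$ along each lifted sphere, the horizontal weight of the next fixed point in the direction of the sphere is determined (it is minus the corresponding weight at the previous point), and the other weight is pinned down up to sign by the stabiliser of the next adjacent sphere (or $\pm 1$ at an arc endpoint). The residual sign ambiguity is resolved using uniqueness of the max and min level sets of $H$ (Lemma~\ref{minmax}) together with $H\circ\gamma=-H$: the alternative sign at any vertex would produce, via the $\gamma$-involution, a second global extremum of $H$. Lemma~\ref{horizontal_change_in_H} then pins down each sphere's self-intersection, and in case (iii) this in particular forces $m_2=m_1+1$ with both spheres of self-intersection $-1$.

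With the horizontal weights known at every fixed point of $P$, the products $a_jb_j$ are determined and $q(P)$ becomes a short arithmetic computation. I expect cases (ii), (iii) and (iv) each to telescope to $q(P)=0>-1$. In case (v), labelling the stabiliser orders of the three spheres of the circle as $m_1,m_2,m_3$ with $z^*$ over the vertex common to the spheres of orders $m_1$ and $m_3$, a direct calculation should give
\[
q(P) \;=\; \frac{m_1+m_3-m_2}{m_1 m_2 m_3}.
\]
The inequality $q(P)>-1$ then reduces to $m_1+m_3+m_1 m_2 m_3>m_2$, which is immediate from $m_1,m_3\geq 2$: one has $m_1 m_2 m_3\geq 4m_2>m_2$, so the left side already exceeds $m_2$.

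The hard part will be the sign bookkeeping in cases (iii) and (v): at each vertex of $P$ there is an a priori sign ambiguity in one horizontal weight, which must be resolved consistently using uniqueness of the extrema of $H$ and the $\gamma$-involution in order to exclude the configurations producing a spurious second max or min. Once the signs are pinned down in each case, the remaining arithmetic is routine.
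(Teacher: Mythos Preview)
Your plan is essentially correct but takes a considerably longer road than the paper. The paper never pins down the signs of the relative weights at all; it simply bounds $|q(P)|$ using the minimal possible stabiliser orders together with coprimality (Lemma~\ref{coprime}). For a circle of two spheres each product $|a_jb_j|\ge 2\cdot 3$, so $|q(P)|\le \tfrac{1}{3}$; for a circle of three spheres the pairwise-coprime orders are at least $2,3,5$, so $|q(P)|\le \tfrac{1}{6}+\tfrac{1}{10}+\tfrac{1}{15}<1$. For a one-sphere arc with stabiliser $m\ge 3$ one has $|q(P)|\le \tfrac{2}{3}$, while $m=2$ combined with Proposition~\ref{nomax} forces endpoints $(1,2)$ and $(1,-2)$, whence $q(P)=0$. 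For a two-sphere arc the crude lower bound is $-(\tfrac{1}{m_1}+\tfrac{1}{m_1m_2}+\tfrac{1}{m_2})$, minimised at $\{m_1,m_2\}=\{2,3\}$ with value $-1$; but that extremal configuration needs all three fixed points to have same-sign weights, which Corollary~\ref{connected_max} forbids. No Hamiltonian bookkeeping beyond Theorem~\ref{admiss} and Proposition~\ref{nomax} is used.

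Your exact-computation route does work and even yields the sharper conclusion $q(P)=0$ in cases (ii)--(iv). One slip to flag in case~(v): with the maximum over the vertex adjacent to the spheres of orders $m_1,m_3$, that vertex contributes $+1/(m_1m_3)$, while your own uniqueness argument forces the other two vertices to have mixed-sign weights, contributing $-1/(m_1m_2)$ and $-1/(m_2m_3)$. Hence
\[
q(P)=\frac{m_2-m_1-m_3}{m_1m_2m_3},
\]
the negative of your stated formula. The desired bound $q(P)>-1$ is then $m_1m_2m_3+m_2>m_1+m_3$, which is still immediate since $m_1m_2m_3\ge 2m_1m_3>m_1+m_3$, so the conclusion is unaffected.
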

\begin{proof}
By Theorem~\ref{admiss} and Proposition~\ref{nomax}, there are three types of patter to consider: a circle of spheres, an arc of spheres and a fixed point which is isolated in $M^\st$. We treat each case separately.

\emph{Maximum occurs above a circle of spheres.} If $P$ is a circle of spheres, then by Theorem~\ref{admiss} the number of spheres is in the circle is either two or three. Suppose first that $P$ contains two fixed points with relative weights $(a_i,b_i)$ where $|a_1|=|a_2|$ and $|b_1| =|b_2|$. The smallest possible sizes of these absolute values is 2 and 3 and so $q(P) \geq-\frac{2}{2\cdot 3} = -\frac{1}{3}$. 

When $P$ is a circle of three spheres, write $m,n,p$ for the order of the stabilisers of a generic point on each sphere. Write the relative weights of the three fixed points as $(a_i,b_i)$. The three triples $(|a_i|,|b_i|)$ coincide with the three possible pairs made out of $m,n,p$. By Lemma~\ref{coprime}, $m,n,p$ are all coprime. It follows that the smallest possible values for the absolute values of the relative weights are $2,3,5$ and so $q(P) \geq-(\frac{1}{2\cdot 3}+\frac{1}{2 \cdot 5}+\frac{1}{3\cdot 5})>-1$.

\emph{Maximum occurs above an arc of spheres.} Suppose the arc contains one sphere and that the order of the stabiliser of a generic point on the sphere is at least three. Then $q(P) \geq -2\frac{1}{3}>-1$. If the arc consists of a single sphere with stabiliser of order 2, then the relative weights of the two fixed points are $(1,-2)$ and $(1,2)$ (otherwise, by Proposition~\ref{nomax}, there is no maximum of $H$ above the sphere). In this case $q(P)=0$. 

Suppose now that the arc contains two spheres. A priori, the smallest possible value of $q(P)$ occurs when the fixed points have relative weights $(1,2)$, $(2,3)$ and $(3,1)$, giving $-1$. But this gives three distinct fixed points with relative weights of the same sign, contradicting Corolloary~\ref{connected_max}. Any other combination of relative weights gives $q(P)>-1$.

\emph{Maximum occurs above a point of type $(1,1)$.} In this case $q(P) = -1$.
\end{proof}

We can now put all of these pieces together to give a complete characterisation of $M^\st$.

\begin{theorem}\label{final_list_of_patterns}
Let $M$ be a closed 4-manifold admitting an $S^1$-invariant definite connection. Suppose that all fixed points in $M$ are isolated. Then the set $M^\st$ of points with non-trivial stabiliser is one of the following:
\begin{enumerate}
\item\label{connected1}
A single circle of either two or three spheres. The relative weights of the fixed points are such that equation (\ref{constraint}) is satisfied.
\item\label{connected2}
An arc of one or two spheres. Again the relative weights of the fixed points are such that equation (\ref{constraint}) is satisfied.
\item\label{disconnected1}
A pair of points, one with relative weights $(1,1)$ the other with relative weights $(1,-1)$.
\item\label{disconnected2}
A point with relative weights $(1,1)$ and a single sphere joining two fixed points each with relative weights $(1,-2)$. 
\end{enumerate}
\end{theorem}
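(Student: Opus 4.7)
The plan is to assemble the theorem from the material already developed in Sections~\ref{isolated_admissible_patterns} and the first half of the current section. There are two ingredients. First, Theorem~\ref{admiss} together with Proposition~\ref{nomax} enumerates the possible patterns and identifies the short list of patterns above which $H$ does \emph{not} attain its maximum. Second, the constraint equation~(\ref{constraint}) reads, in the isolated case,
\[
\sum_P q(P) = 0,
\]
which I will balance against the contribution estimates of Lemmas~\ref{contribution_no_max} and~\ref{contribution_max}.

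Concretely, I would proceed as follows. By Lemma~\ref{minmax} the maximum locus $Z_{\max}$ is connected, so its image in $M$ meets exactly one pattern; call it $P_0$. Every pattern $P \neq P_0$ must then appear on the list of Proposition~\ref{nomax}, i.e. either an isolated fixed point with relative weights $(1,-1)$ or a single sphere whose endpoints both have relative weights $(1,-2)$. Let $k$ denote the number of such non-max patterns. Lemma~\ref{contribution_no_max} gives a common value $\kappa$ for each $q(P)$ with $P \neq P_0$, and Lemma~\ref{contribution_max} bounds $q(P_0)$, with equality attained only when $P_0$ is the $(1,1)$-singleton. Plugging into the constraint $q(P_0) + k\kappa = 0$, the strict inequality of Lemma~\ref{contribution_max} forces $k \leq 1$, and $k = 1$ occurs precisely in the equality case of that lemma, where $P_0$ is the $(1,1)$-singleton.

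It remains to split into the two alternatives. When $k=0$ the manifold contains only $P_0$, so $P_0$ is a circle of spheres, an arc of spheres, or a $(1,1)$-singleton compatible with $q(P_0)=0$; the $(1,1)$-singleton alone is ruled out because it gives $q \neq 0$, leaving cases~(\ref{connected1}) and~(\ref{connected2}). When $k=1$, $P_0$ is the $(1,1)$-singleton and the unique companion pattern is one of the two possibilities from Proposition~\ref{nomax}, giving cases~(\ref{disconnected1}) and~(\ref{disconnected2}). The only real obstacle is ensuring that the constraint bookkeeping is internally consistent; the relevant tightness is already encoded in Lemma~\ref{contribution_max}, so once those estimates are in hand the classification is essentially forced. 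I would finish by noting as a sanity check that in all four configurations the locus of fixed points with same-sign relative weights is exactly $P_0$ (or contained in $P_0$), in agreement with Corollary~\ref{connected_max}.
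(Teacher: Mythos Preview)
Your proposal is correct and follows essentially the same approach as the paper: both proofs isolate the unique ``max'' pattern $P_0$, appeal to Lemmas~\ref{contribution_no_max} and~\ref{contribution_max} for the values $q(P)=1$ (non-max) and $q(P_0)\geq -1$ (max, with equality only for the $(1,1)$-singleton), and balance these against the constraint $\sum_P q(P)=0$ to force either $k=0$ or $k=1$. The only difference is cosmetic---you package the argument via the auxiliary count $k$ and placeholder $\kappa$, whereas the paper simply splits on whether $P_0$ is the $(1,1)$-singleton and reads off the conclusion directly.
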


\begin{proof}
If the maximum of $H$ is attained above a fixed point with relative weights $(1,1)$ then this contributes $-1$ to the left-hand-side of equation~(\ref{constraint}). By Lemma~\ref{contribution_no_max} any other pattern contributes exactly $+1$ to the left-hand-side and so there must be exactly one other pattern. This accounts for cases~\ref{disconnected1} and~\ref{disconnected2} in the above list.

By Lemma~\ref{contribution_max}, if the maximum of $H$ is attained above any other pattern, $P$, then $q(P)>-1$. Since, by Lemma~\ref{contribution_no_max}, the patterns which do not bring a maximum of $H$ both have $q =1$ it follows from (\ref{constraint}) that $P$ is the only pattern present. This accounts for cases~\ref{connected1} and~\ref{connected2}.
\end{proof}

\subsection{Completion of the proof of Theorem~\ref{main_theorem}}

\begin{theorem}
Let $M$ be a closed 4-manifold admitting an $S^1$-invariant definite connection. Suppose that all fixed points in $M$ are isolated. Then $M$ is diffeomorphic to either $S^4$ or $\overline{\C\P}^2$
\end{theorem}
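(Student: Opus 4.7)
My plan is to exploit the complete classification of $M^\st$ given in Theorem~\ref{final_list_of_patterns}, which reduces us to four explicit cases, and in each case to read off $\chi(M)$ and $\sigma(M)$ from the constraint/signature equations of Theorem~\ref{signature_constraint_theorem} and then invoke Fintushel's classification.

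The first step is to show $\pi_1(M)=1$. Since $Z\to M$ is an $S^2$-bundle we have $\pi_1(M)\cong \pi_1(Z)$, and by Theorem~\ref{huili} this coincides with $\pi_1(Z_{\max})$. I would argue that, in each of the four patterns, $Z_{\max}$ reduces to a single point: Lemma~\ref{relative_weights_equal_sign} says $H$ attains its maximum precisely above the fixed points of $M$ whose relative weights have the same sign, while Lemma~\ref{H_via_relative_weights} gives $H(z)=a+b$ at such a lift, so two distinct same-sign fixed points would force two distinct maximum values of $H$---incompatible with connectedness of $Z_{\max}$ (Lemma~\ref{minmax}). Hence in each pattern there is a unique same-sign fixed point $p$, and its distinguished lift $z\in Z$ is the unique global maximum of $H$, so $Z_{\max}=\{z\}$ is simply connected.

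Next, since all fixed points are isolated, $\chi(M)$ equals the number of fixed points, and one checks pattern by pattern that $\chi(M)\in\{2,3\}$. In the disconnected cases 3 and 4 of Theorem~\ref{final_list_of_patterns}, $\sigma$ is immediate from (\ref{signature}): one finds $\chi=2,\sigma=0$ in case 3 and $\chi=3,\sigma=-1$ in case 4. For the connected cases (circles of $2$ or $3$ spheres and arcs of $1$ or $2$ spheres), I would use (\ref{constraint}) together with the uniqueness of the same-sign fixed point to pin down the signs in the relative weights of every fixed point. For instance, in a circle of three spheres with stabiliser orders $a,b,c$, (\ref{constraint}) combined with the uniqueness of the same-sign point forces one of $\{a,b,c\}$ to equal the sum of the other two; substituting this relation into the left-hand side of (\ref{signature}) makes the sum telescope to $3\sigma=-3$. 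The other connected subcases are handled in the same spirit, yielding $\sigma=0$ whenever $\chi=2$ and $\sigma=-1$ whenever $\chi=3$.

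Finally, applying Fintushel's theorem (Theorem~\ref{Fintushel_classification}), $M$ is either $S^4$ or a connected sum of copies of $S^2\times S^2$, $\C\P^2$ and $\overline{\C\P}^2$; among these, $\chi=2$ forces $M\cong S^4$, while $\chi=3$ combined with $\sigma=-1$ forces $M\cong\overline{\C\P}^2$. The main obstacle will be the sign/Diophantine analysis in the circle-of-three and arc-of-two cases: one must show that \emph{every} sign pattern compatible with both (\ref{constraint}) and the uniqueness of a same-sign fixed point leads to the same value of $\sigma$. The telescoping identity illustrated above in the circle-of-three case is the key algebraic trick I expect to use uniformly across these subcases.
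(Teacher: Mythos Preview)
Your overall strategy matches the paper's: reduce to Theorem~\ref{final_list_of_patterns}, show $\pi_1(M)=1$ via Theorem~\ref{huili}, read off $\chi$ from the number of fixed points, compute $\sigma$ case by case, and finish with Fintushel. The execution differs in two places, and your route is sound in both.

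For $\pi_1$, the paper argues more cheaply: since the fixed points in $M$ are isolated, the fixed set in $Z$ consists of isolated points and possibly whole fibres (over the $(1,-1)$ points), so $Z_{\max}$ is a point or a $2$-sphere --- simply connected either way. Your case-by-case argument that $Z_{\max}$ is a single point also works, but note a small slip in the justification: two distinct same-sign fixed points need not give distinct \emph{values} of $H$ (they could share the same $a+b$); what actually contradicts Lemma~\ref{minmax} is that they would give two \emph{isolated points} in $Z_{\max}$, violating connectedness. Corollary~\ref{connected_max} already packages this.

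For $\sigma$ when $\chi=3$, the paper treats the circle-of-three case by a geometric argument: it orients the three spheres so that consecutive intersections are positive, observes that positive signature would make the lifted spheres close up into two disjoint loops in $Z$, contradicting Lemma~\ref{must_be_mobius}, and concludes $\sigma=-1$. Your algebraic approach is a genuine alternative and does go through: with exactly one same-sign vertex (Corollary~\ref{connected_max}) the constraint~(\ref{constraint}) forces one stabiliser order to equal the sum of the other two, say $p=m+n$, and substituting into~(\ref{signature}) the sum telescopes to $-3$. For the arc-of-two case the paper eliminates the ``wrong'' sign configuration at the middle vertex using Lemma~\ref{no_max_at_start}; your uniqueness-of-same-sign-point argument eliminates the same configuration (with $a+b=1$ one checks that both the middle vertex and one endpoint are same-sign), and then~(\ref{constraint}) gives $a+b=-1$ in the surviving case, after which~(\ref{signature}) collapses to $-3$. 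So your plan is correct; the trade-off is that you replace one short geometric observation (Lemma~\ref{must_be_mobius}) by a clean Diophantine/telescoping computation.
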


\begin{proof}
We first prove that $M$ is simply connected. Since the fixed point set in $M$ is a collection of points, the fixed point set in $Z$ is a collection of points and possibly some 2-spheres (fibres of $Z \to M$). It follows that the locus on which $H$ attains its maximal value is either a point or a 2-sphere, both of which are simply connected. It now follows from Theorem~\ref{huili} that $\pi_1(M)=1$. 

Looking at the list in Theorem~\ref{final_list_of_patterns} we see that there are either 2 or 3 fixed points and so $\chi(M)=2$ or $3$ accordingly. When $\chi(M)=2$ then (since $\pi_1=1$) $b_2=0$ and we see from Fintushel's Theorem~\ref{Fintushel_classification} that $M$ is diffeomorphic to $S^4$. It remains to treat the case $\chi(M)=3$. In this case $b_2=1$ and by Fintushel's Theorem~\ref{Fintushel_classification} it suffices to show that the signature is $-1$. Looking at the list in Theorem~\ref{final_list_of_patterns}, there are three possibilities with three fixed points and we divide the proof up accordingly. 

\emph{$M^\st$ is a circle of three spheres.} Cyclically order the spheres in the circle. Pick any choice of orientation on the first sphere in the circle. Orient the second so that it intersects the first positively. Now orient the third so that it intersects the second positively. The sign of the intersection between the third and first sphere is now fixed by the initial choice and agrees with the sign of the signature. (This is because the spheres are all homologous up to sign, since $b_2=1$.) 

Meanwhile, as is described in \S\ref{adjunction_section},  the choice of orientation on each sphere determines a lift of it to $Z$. Picking the orientation of the second sphere so that it meets the first positively means that the lift of the second sphere meets the lift of the first. Similarly the lift of the  third sphere meets the lift of the second. Now if the signature were $1$, the third and first sphere would intersect positively, their lifts would meet and form a loop of three spheres. Reversing all the orientations would give a second connected component of lifts contradicting Lemma~\ref{must_be_mobius}. We see then that the signature must be $-1$ as claimed.

\emph{$M^\st$ is an arc of two spheres.} Write the relative weights at the end-points of the arc as $(1,a)$ and $(b,1)$. The middle fixed point has relative weights $(p,q)$ with $|p|=|a|$ and $|q|=|b|$. There are thus two possibilities: either the relative weights at the middle point are $(a,b)$ or they are $(-a,b)$. 

Suppose first that the middle point $p$ has relative weights $(-a,b)$. The constraint equation~(\ref{constraint}) implies $b = 1-a$. Now the relative weights at $p$ have the same sign. This means that at a fixed point $z \in Z$ above $p$, the horizontal weights also have the same sign. So the maximum occurs above $p$. Lemma~\ref{no_max_at_start} then gives that $|H(z)| = 1$ so $|1-2a| =1$. The only way this can happen is either $a=0$ or $a=1$ both of which contradict the fact $a$ is a relative weight at the start of the arc.

Next suppose that the relative weights at $p$ are $(a,b)$. The constraint equation~(\ref{constraint}) now implies $b = -1 - a$. We now compute the signature directly from equation~(\ref{signature}). The end result is $-1$ as claimed. 

\emph{$M^{S^1}$ is one point with relative weights $(1,1)$ and a pair of points each with relative weights $(1,-2)$.} In this case we can compute the signature directly from equation~(\ref{signature}), which gives the answer $-1$ as claimed.
\end{proof}

\section{Open questions and further directions}\label{conclusion}

Theorem \ref{main_theorem} is partial case of the following conjecture,
that first appeared in \cite{Fine2009Symplectic-Cala} and which remains completely open.

\begin{conjecture} If a closed 4-manifold $M$ admits a positive definite connection then $M$ is diffeomorphic to either $S^4$ or $\overline{\C\P}^2$.
\end{conjecture}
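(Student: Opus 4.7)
The plan would be to retain the same basic framework as the proof of Theorem~\ref{main_theorem} — pass from $A$ on $E \to M$ to the symplectic Fano sixfold $(Z,\omega_A)$ and extract topological information about $M$ from the symplectic geometry of $Z$ — but replace the Hamiltonian-function arguments, which are unavailable without $S^1$-symmetry, by pseudoholomorphic and gauge-theoretic input. As a starting point one harvests the constraints already available for free: the Derdzinski--Rigas inequality $2\chi(M)+3\sigma(M)>0$, and the adjunction inequality of Proposition~\ref{adjunction_inequality} applied to any embedded surface fixed by a finite-order symplectomorphism of $Z$. Together these rule out $n\overline{\C\P}^2$ for $n \geq 4$, and they already match perfectly the orientation convention attached to a positive definite connection.

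Next I would try to show $\pi_1(M)=0$; since $Z \to M$ has $S^2$-fibres this is equivalent to $\pi_1(Z)=0$. Hui Li's Theorem~\ref{huili} is not available, but one can hope for an analogous rigidity statement for symplectic Fano sixfolds carrying an $S^2$-fibration with $c_1 \cdot F = 2$, perhaps deduced by studying Gromov-compactified moduli of $J$-holomorphic fibres and arguing that loops in $M$ lift and contract inside the fibration. Combined with Fintushel-style classification (for the simply connected case without an $S^1$-action one must instead invoke Freedman/Donaldson-type results), this would reduce the candidates to connected sums of $\overline{\C\P}^2$, $\C\P^2$, and $S^2 \times S^2$ with signature bounded by Derdzinski--Rigas.

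The heart of the proof would be a substitute for the role played by $H$ in the paper. The natural replacement is the moduli space $\mathcal{M}_F(Z,J)$ of $J$-holomorphic spheres in the fibre class $F$: for a generic $\omega_A$-tame $J$ it has the expected dimension $\dim M + 2$, and one would want to argue by Gromov/McDuff-type analysis of symplectic $S^2$-bundles that, after controlling bubbling using $c_1 \cdot F = 2$, the moduli space foliates $Z$ by embedded spheres representing $F$. Combined with an intersection-theoretic expansion of $c_1^3(Z)$, $c_1 c_2(Z)$ and $\chi(Z)$ via the Leray--Hirsch description of the $S^2$-bundle $Z \to M$, this should yield global numerical identities analogous to (\ref{constraint}) and~(\ref{signature}) but not requiring equivariance. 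The non-isolated possibility that the fibre class bubbles into a pair of $c_1 = 1$ curves is the analogue of what the $S^1$-argument handled via two fixed points of relative weights summing to $\pm 1$, and must be analysed separately.

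The main obstacle is the final elimination of $2\overline{\C\P}^2$, $3\overline{\C\P}^2$ and $S^2 \times S^2$. In the equivariant proof these are excluded using the uniqueness of the maximum of $H$ together with the constraint equation, a very rigid consequence of the presence of a Hamiltonian circle; nothing comparable is currently available without $S^1$-symmetry. A promising but speculative route is Seiberg--Witten theory on $M$: the existence of a definite connection in $\Lambda^+ \oplus E$ imposes a strong curvature-type condition (see the Riemannian reformulation in Corollary~\ref{Riemannian_cor}), and one might hope that the basic classes of $n\overline{\C\P}^2$ with $n \geq 2$ are too numerous to be compatible with it. Another angle is to exploit the fact that $|d|=2$, not $1$, so that $Z \to M$ is a genuine $SO(3)$-bundle rather than a projective bundle; the resulting obstruction to lifting the ruled structure of $Z$ to a complex-linear $\C\P^1$-bundle should by itself constrain $M$. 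Either way, some essentially new ingredient beyond the methods of the $S^1$-equivariant case seems to be needed, which is presumably why the conjecture has so far resisted attack.
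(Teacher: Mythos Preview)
The statement you are addressing is a \emph{conjecture}, and the paper does not prove it. Immediately after stating it, the paper says explicitly that it ``remains completely open'' and refers the reader to \cite{Fine2014A-gauge-theoret} for further discussion in the non-equivariant setting. There is therefore no paper proof to compare your proposal against.

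Your write-up is not a proof either, and to your credit you do not pretend otherwise: you describe a research strategy and close by acknowledging that ``some essentially new ingredient beyond the methods of the $S^1$-equivariant case seems to be needed''. That assessment is accurate and matches the paper's own stance. A couple of remarks on the strategy itself: the Derdzinski--Rigas computation ruling out $n\overline{\C\P}^2$ for $n\geq 4$ is correct, but your appeal to Proposition~\ref{adjunction_inequality} is not available without symmetry, since the proof there uses that the lifted surfaces are fixed by a finite-order symplectomorphism, which is precisely what the $\Z_n$-action supplies; without any group action you would need a separate argument that the lifts $\tilde f(\Sigma)$ are symplectic. Likewise, once you drop the $S^1$-action you lose access to Fintushel's classification, so ``Fintushel-style'' would have to mean Freedman plus Donaldson, as you note. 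These are not errors so much as confirmations of your final paragraph: the equivariant machinery genuinely does not survive, and the conjecture is open.
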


This conjecture is discussed further in \cite{Fine2014A-gauge-theoret} in the general (non-equivariant) setting, along with its relation to Einstein metrics.

Theorem \ref{main_theorem}  should admit series of generalisations in several directions. First of all, we expect that the methods of this article will also lead to a classification of diffeomorphism types of four-dimensional orbifolds with $S^1$-invariant definite connections. We hope to address this in future work. At the same time, even in the case of manifolds our methods do not say anything about the nature of the $S^1$-action. There are many exotic $S^1$-actions on $S^4$ or  $\overline{\C\P}^2$. For these actions the set of points with non-trivial stabiliser projects to a non-trivial knot in the quotient $M/S^1$. By analogy with the result of Grove and Wilking (the second part of Theorem \ref{HKGW}) we expect that the answer to the following question is positive.  

\begin{question} Suppose we have an $S^1$-invariant definite connection on 
$S^4$ or $\overline{\C\P}^2$. Is it true that the $S^1$-action is conjugate to a standard linear action?
\end{question}

All these questions have their Riemannian counterparts on Riemanninan 
manifolds and orbifolds satisfying curvature inequality of Corollary~\ref{Riemannian_cor}.

In a different direction, it would be interesting classify or at least find bounds on those six-dimensional symplectic Fano manifolds admitting Hamiltonian $S^1$-actions. We will express this in the following conjecture.

\begin{conjecture}\label{S1_integrable_Fano_question}
Let $(Z,\omega)$ be a six-dimensional symplectic Fano manifold with a Hamiltonian $S^1$-action. Is it true that there is an integrable $S^1$-invariant complex structure on $Z$ compatible with~$\omega$?
\end{conjecture}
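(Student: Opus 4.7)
The plan is to construct an integrable $S^1$-invariant, $\omega$-compatible complex structure on $Z$ by combining normal form data near the critical set of $H$ with the symplectic reductions $Z_c = H^{-1}(c)/S^1$ at regular values. At a regular value $c$, the reduction $Z_c$ is a four-dimensional symplectic orbifold, and one expects, from a combination of the Duistermaat--Heckman description of how $[\omega_c]$ varies with $c$ and equivariant localisation applied to $c_1(Z) = \lambda[\omega]$, that each regular reduction is itself a symplectic Fano orbifold (at least for a large range of $c$). Using McDuff's classification of four-dimensional symplectic Fano manifolds, extended to the orbifold setting, each $Z_c$ should then admit a compatible integrable complex structure $J_c$, canonical up to deformation.

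Next I would try to lift the $J_c$ to $Z$ slab by slab. On a slab $H^{-1}((c_1,c_2))$ between consecutive critical values, the Hamiltonian together with the quotient to $Z_c$ realises $Z$ as an $S^1$-principal bundle over $Z_c \times (c_1,c_2)$. The standard Kaluza--Klein construction, combining a connection 1-form (determined by $\omega$ and $dH$) with the pullback of $J_c$, then produces an $S^1$-invariant, $\omega$-compatible almost complex structure on the slab whose integrability reduces to the integrability of $J_c$. The deformation freedom in the choice of $J_c$ is what one would use to make the resulting structures on adjacent slabs match along shared regular levels.

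Near the extremal sets $Z_{\min}$ and $Z_{\max}$, which are connected by Lemma~\ref{minmax} and satisfy $\pi_1(Z_{\min}) \cong \pi_1(Z) \cong \pi_1(Z_{\max})$ by Theorem~\ref{huili}, the equivariant symplectic neighbourhood theorem gives a standard model for $Z$, namely the symplectic normal bundle equipped with its linear $S^1$-action, and on such a model there is an evident invariant integrable complex structure. The analogous normal form argument applies at each lower-dimensional fixed component. The hard part --- and the step I would expect to be the main obstacle --- is verifying that all these local integrable structures are mutually compatible across critical levels of $H$, where the reductions $Z_c$ undergo symplectic birational transformations of Guillemin--Sternberg type (weighted blowups and blowdowns); even if each piece is separately Kähler, there is no a priori reason that the choices of $J_c$ deform continuously through a wall-crossing, and controlling this seems to require genuinely global input.

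Before attempting the general case, I would first revisit the known partial results --- the case $H^2(Z,\mathbb{R}) = \mathbb{R}$ of McDuff--Tolman and the semi-free case announced by Cho --- and attempt to upgrade their diffeomorphism-level classifications to the existence of a compatible integrable invariant complex structure. These cases should both serve as a sanity check on the overall strategy and provide explicit local models for the wall-crossing behaviour that any general gluing argument must realise. A positive answer in these cases, together with an analysis of how the local model on the symplectic normal bundle of a fixed component is Kähler-compatible with the lifted structures on neighbouring slabs, would then give a template for attacking the full conjecture.
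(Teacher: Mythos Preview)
The statement you are attempting to prove is Conjecture~\ref{S1_integrable_Fano_question}, which the paper presents as an \emph{open question}, not as a theorem. There is no proof in the paper to compare against: the authors merely record the conjecture, note that McDuff's methods handle the special case $H^2(Z,\R)=\R$, and mention Cho's announced result in the semi-free case. So the correct comparison here is that the paper offers no argument at all, while you are sketching a research programme.

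As for the content of your proposal: what you have written is not a proof but an outline of a plausible strategy, and you yourself flag the genuine obstruction. The step where you hope to glue the lifted K\"ahler structures across critical levels of $H$ is exactly where the argument would need a new idea. The symplectic reductions $Z_c$ change by weighted blowups and blowdowns as $c$ crosses a critical value, and there is no mechanism in your outline that forces the family $J_c$ to extend holomorphically through such a transition; the ``deformation freedom'' you invoke is not shown to be sufficient. Moreover, the claim that each regular reduction $Z_c$ is a symplectic Fano orbifold is not automatic from Duistermaat--Heckman plus $c_1(Z)=\lambda[\omega]$: the reduced first Chern class picks up a contribution from the curvature of the $S^1$-bundle, and one has to check carefully that the resulting class remains a positive multiple of $[\omega_c]$ for all regular $c$, which in general it need not. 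Even granting that, extending McDuff's four-dimensional classification to orbifolds is itself nontrivial. In short, your sketch identifies the right ingredients and the right difficulty, but it does not resolve the difficulty, and the paper does not claim to either.
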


Dusa McDuff communicated to us that in the case when $H^2(Z,\mathbb R)=\mathbb R$ the methods of her work \cite{McDuff2009Some-6-dimensio} give a proof of this conjecture. Yunhyung Cho has told us that in recent work which will appear soon, he proves the conjecture in the case when the $S^1$-action on $Z$ is semi-free.

Finally, it would be interesting to extend our approach  to get a topological classification of some $S^1$-invariant fat bundles of higher dimension. Originally fat bundles were introduced by Weinstein, motivated by the search for new manifolds of positive sectional curvature. The fatness condition arises naturally when studying such metrics on the total space of a fibration with totally geodesic fibres. There has been considerable work in this direction, which is surveyed in \cite{Ziller2000Fatness-revisit} and \cite[Section 4]{Ziller2014Riemannian-mani}. On the other hand, the symplectic interpretation of fat bundles, has not been explored to such an extent. The symplectic point of view was described by Weinstein in \cite{Weinstein1980Fat-bundles-and} and is encoded in the notion of $S$-fatness which we now recall. 

Let $P\to M$ be a principal $G$-bundle over $M$ with a connection $A$ and denote by $\g$ the Lie algebra of $G$. The curvature $F_A$ of this bundle is a section of $\g\otimes \Lambda^2(T^*M)$. So, interpreting $F_A$ as a section of $\Hom(\g^*,\Lambda^2(T^*M))$, for any $\mu\in \g^*$ we get a section $F_A(\mu)\in \Lambda^2(T^*M)$. 

\begin{definition}\cite{Weinstein1980Fat-bundles-and}
Let $S\subset \g^*$ be a union of non-zero coadjoint orbits in $\g^*$. We say that the connection $A$ is \emph{$S$-fat} if for any $\mu\in S$ the section $F_A(\mu)\in \Lambda^2(T^*M)$ is a nowhere degenerate two-form on $M$. In the case $S=\g^*\setminus 0$ we say simply that $A$ is \emph{fat}.
\end{definition}

Let $\mathcal{O}$ be a coadjoint orbit of $G$ and suppose and a principal $G$-bundle $E$ over $M$ is $\mathcal{O}$-fat. In this case Weinstien explained \cite[Theorem 3.2]{Weinstein1980Fat-bundles-and} that the associated $\mathcal{O}$-bundle over $M$ is symplectic. Reznikov discovered examples of such types of bundles originating from Riemannian geometry. He observed \cite{Reznikov1993Symplectic-twis} that the twistor space of an even dimensional Rimannian manifold $M^{2n}$, satisfying a certain curvature inequality is naturally symplectic. The inequality can be paraphrased as saying the curvature is either ``sufficiently positive'' or ''sufficiently negative''. In terms of fatness, here the principal bundle is the $\SO(2n)$-frame bundle with induced Levi-Civita connection, and the coadjoint orbit $\mathcal{O}$ is the space of anti-symmetric $2n\times 2n$ matrices with square $-1$. Reznikov's condition on curvature is equivalent to $\mathcal{O}$-fatness of $\SO(2n)$-bundle. Thus one can hope to study sufficiently positively curved even-dimensional manifolds with an isometric $S^1$-action using methods of symplectic geometry as in  Corollary  \ref{Riemannian_cor}. The case of dimension six seem to be the most promising. We hope to address this in future work.

\bibliographystyle{abbrv}
\bibliography{S1_bibliography}

\noindent{\scshape Joel Fine}\\
{\tt joel.fine@ulb.ac.be}\\
Département de mathématique, Université libre de Bruxelles, Bruxelles 1050, Belgium.
\vspace{\baselineskip}

\noindent{\scshape Dmitri Panov}\\
{\tt dmitri.panov@kcl.ac.uk}\\
Department of Mathematics, King's College, Strand, London WC2R 2LS, United Kingdom.

\end{document}